\newlength{\itemlaenge}
\newtheoremstyle{mytheorem}
  {}
  {}
  {\slshape}
  {}
  {\scshape}
  {.}
  { }
  {}
\newtheoremstyle{mydefinition}
  {}
  {}
  {\upshape}
  {}
  {\scshape}
  {.}
  { }
  {}
\theoremstyle{mytheorem}
\newtheorem{lemma}{Lemma}[section]
\newtheorem{prop}[lemma]{Proposition}
\newtheorem*{prop*}{Proposition}
\newtheorem{cor}[lemma]{Corollary}
\newtheorem{thm}[lemma]{Theorem}
\newtheorem*{thm*}{Theorem}
\theoremstyle{mydefinition}
\newtheorem{rem}[lemma]{Remark}
\newtheorem*{rem*}{Remark}
\newtheorem*{notation*}{Notation}
\newtheorem*{warning*}{Warning}
\newtheorem{defi}[lemma]{Definition}
\newtheorem*{defi*}{Definition}
\numberwithin{equation}{section}
\newcommand{\bqn}{\begin{equation*}}
\newcommand{\eqn}{\end{equation*}}
\newcommand{\bq}{\begin{equation}}
\newcommand{\eq}{\end{equation}}
\newcommand{\ba}{\begin{aligned}}
\newcommand{\ea}{\end{aligned}}
\newcommand{\be}{\begin{enumerate}}
\newcommand{\ee}{\end{enumerate}}
\newcommand{\thismonth}{\ifcase\month 
  \or January\or February\or March\or April\or May\or June%
  \or July\or August\or September\or October\or November%
  \or December\fi}
\newcommand{\aut}{\operatorname{Aut}}
\newcommand{\SL}{\operatorname{SL}}
\newcommand{\Sp}{\operatorname{Sp}}
\newcommand{\PU}{\operatorname{PU}}
\newcommand{\CC}{{\mathbb C}}
\newcommand{\DD}{{\mathbb D}}
\newcommand{\NN}{{\mathbb N}}
\newcommand{\QQ}{{\mathbb Q}}
\newcommand{\RR}{{\mathbb R}}
\newcommand{\ZZ}{{\mathbb Z}}
\newcommand{\Hh}{{\mathcal H}}
\newcommand{\Rr}{{\mathcal R}}
\renewcommand{\b}{\beta}
\renewcommand{\l}{\lambda}
\newcommand{\gG}{{\mathbf G}}
\newcommand{\lL}{{\mathbf L}}
\newcommand{\<}{\langle}
\renewcommand{\>}{\rangle}
\newcommand{\rad}{\mathrm{Rad}}
\def\h{{\rm H}}
\def\hb{{\rm H}_{\rm b}}
\def\hcb{{\rm H}_{\rm cb}}
\def\linfty{L^\infty}
\def\binfty{\mathcal B^\infty_{\mathrm alt}}
\def\T{\operatorname{T}}
\def\one{\mathbf{1\kern-1.6mm 1}}
\def\homeoz#1{\operatorname{Homeo}^+_\ZZ\!\left(#1\right)}
\def\c{{\operatorname{c}}}
\def\h2{{\operatorname{H_2}}}
\def\h1{{\operatorname{H_1}}}
\def\res{{\operatorname{Res}}}
\def\rk{{\operatorname{rank}}}
\def\ker{{\operatorname{ker}}}
\def\im{{\operatorname{im}}}
\def\SL{\operatorname{SL}}
\def\Sp{\operatorname{Sp}}
\def\cs{{\check S}}
\def\eb{{e^{\rm b}}}
\def\ksib{\kappa_\Sigma^{\rm b}}
\def\kpub{{\kappa^{\rm b}_{\PU(1,1)}}}
\def\khb{\kappa_{H}^{\rm b}}
\def\k{\kappa}
\def\kgb{\kappa_G^{\rm b}}
\def\khb{\kappa_{H}^{\rm b}}
\def\tb{{\rm t}_{\rm b}}
\def\binfty{\mathcal B^\infty_{\mathrm {alt}}}
\def\hcb{{\rm H}_{\rm cb}}
\def\to{\rightarrow}
\def\hb{{\rm H}_{\rm b}}
\def\h{{\rm H}}
\def\hhc{\hat{\rm H}_{\rm c}}
\def\hhcb{\hat{\rm H}_{\rm cb}}
\def\vk{\varkappa}
\def\vkj{\vk_{H_j}}
\def\vkr{{\varkappa^\RR}}
\def\vkrl{{\varkappa^\RR_L}}
\def\vkrs{{\varkappa^\RR_S}}
\def\vkrj{\vk^{\RR}_{H_j}}
\renewcommand{\phi}{\varphi}
\def\No{N\raise4pt\hbox{\tiny o}\kern+.2em}
\def\no{n\raise4pt\hbox{\tiny o}\kern+.2em}
\renewcommand{\O}{\textup{O}}
\newcommand{\orn}{\textup{or}}
\renewcommand{\hom}{\textup{Hom}}
\newcommand{\homc}{\textup{Hom}_{\textup c}}
\newcommand{\Rad}{\textup{Rad}}
\begin{document}

\title[Weakly maximal representations]{On weakly maximal representations\\ of surface groups}
\author[Ben Simon]{G.~Ben Simon}
\email{gabi.ben-simon@math.ethz.ch}
\address{Department Mathematik, ETH Z\"urich, R\"amistrasse 101, CH-8092 Z\"urich, Switzerland}
\author[Burger]{M.~Burger}
\email{burger@math.ethz.ch}
\address{Department Mathematik, ETH Z\"urich, R\"amistrasse 101, CH-8092 Z\"urich, Switzerland}
\author[Hartnick]{T.~Hartnick}
\email{hartnick@tx.technion.ac.il}
\address{Mathematics Department, Technion - Israel Institute of Technology, Haifa, 32000, Israel}
\author[Iozzi]{A.~Iozzi}
\email{iozzi@math.ethz.ch}
\address{Department Mathematik, ETH Z\"urich, R\"amistrasse 101,
  CH-8092 Z\"urich, Switzerland}
\author[Wienhard]{A.~Wienhard}
\email{wienhard@mathi.uni-heidelberg.de}
\address{Mathematisches Institut, Ruprecht-Karls-Universit\"at Heidelberg, 69120 Heidelberg, Germany
\newline
\newline HITS gGmbH, Heidelberg Institute for Theoretical Studies, Schloss-Wolfs\-brunnen\-weg 35, 69118 Heidelberg, Germany}
\thanks{M.~B. was partial supported by the Swiss National Science Foundation project  200020-144373; 
T.~H. was partial supported by the Swiss National Science Foundation project 2000021-127016/2; 
A.~I. was partial supported by the Swiss National Science Foundation projects 2000021-127016/2 and 200020-144373;
A.~W. was partially supported by the National Science Foundation under agreement No. DMS-1065919 and 0846408, by the Sloan Foundation, by the Deutsche Forschungsgemeinschaft, and by the ERCEA under ERC-Consolidator grant no. 614733.  
Support by the Institut Mittag-Leffler (Djursholm, Sweden) and by the Institute for Advanced Study (Princeton, NJ) is gratefully
acknowledged.}


\date{\today}

\begin{abstract} 
We introduce and study a new class of representations of surface groups into Lie groups of Hermitian type, 
called {\em weakly maximal} representations.  
We prove that weakly maximal representations are discrete and injective and 
we describe the structure of the Zariski closure of their image.  
Furthermore we prove that the set of weakly maximal representations is a closed subset of the representation variety and describe its relation to 
other geometrically significant subsets of the representations variety.
\end{abstract}
\maketitle
%
%
%
%


\section{Introduction}\label{sec:intro}

Given a compact oriented surface $\Sigma$ of negative Euler characteristic, 
possibly with boundary, a general theme is to study the space of representations
$\hom(\pi_1 (\Sigma), G)$ of the fundamental group $\pi_1(\Sigma)$ of $\Sigma$ 
into a semisimple Lie group $G$, and in
particular to distinguish subsets of geometric significance.

In recent years these studies led to the discovery of new subsets of 
representation varieties:  Hitchin
components \cite{Hitchin, Goldman_Choi, Labourie_anosov, Guichard_convex, Guichard_Wienhard_convex}, 
positive representations \cite{Fock_Goncharov,Fock_Goncharov_convex}, maximal representations
\cite{Goldman_thesis, Goldman_82, Toledo_89, Hernandez,
Burger_Iozzi_Wienhard_tol, Burger_Iozzi_Wienhard_anosov,
Burger_Iozzi_Labourie_Wienhard, Burger_Iozzi_Wienhard_htt, Wienhard_mapping,
Hartnick_Strubel, Gothen, Bradlow_GarciaPrada_Gothen_survey, Bradlow_GarciaPrada_Gothen,
Bradlow_GarciaPrada_Gothen_sp4, GarciaPrada_Gothen_Mundet} and Anosov representations
\cite{Guichard_Wienhard_anosov, Labourie_anosov, Guichard_Wienhard_invariants}. 
Even though these subsets are defined and investigated by very different methods, they 
exhibit several common properties, and  their study is summarized under the terminology \emph{higher Teichm\"uller theory}.\\

Here we introduce a new class of representations of $\pi_1(\Sigma)$ into a Lie group of Hermitian type, 
the set of {\em weakly maximal representations}. 
We establish several results showing that weakly maximal representations are of geometric significance. 
We also construct various nontrivial example of weakly maximal representations, 
establish a relation to orders on Lie groups, 
and discuss the relation of the set of weakly maximal representations with other subsets of the representation variety.  

Weakly maximal representations with non-zero Toledo number also admit a geometric characterization. 
This geometric characterization is discussed in detail in the companion paper \cite{BBHIW2}. 
We show there that weakly maximal representations with non-zero Toledo number are representations 
that are order preserving in an appropriate sense with respect to an arbitrary bi-invariant continuous partial order on the cyclic covering of $G$. 
If the symmetric space associated to $G$ is a Hermitian symmetric space of tube type, 
they are also  order-preserving with respect to the order induced by the Kaneyuki causal ordering on the corresponding Shilov boundary. 
This leads to a particular simple geometric characterization in the tube type case.

\subsection{Structure Theorems for Weakly Maximal Representations}\label{1.1}
Recall that the Toledo invariant $\T(\rho)$ of a representation $\rho:\pi_1(\Sigma)\to G$ 
is defined as the evaluation, in an appropriate sense, of the pullback $\rho^*(\kgb)$
of the bounded K\"ahler class $\kgb\in\hcb^2(G,\RR)$ on the relative fundamental class
$[\Sigma,\partial\Sigma]$ (see \S~\ref{sec:tolinv} for details).  This definition in terms of bounded
cohomology leads to a chain of inequalities
\begin{eqnarray*}
 |\T(\rho)|\leq 2\|\rho^*(\kgb)\| \cdot |\chi(\Sigma)| \leq 2\|\kgb\| \cdot
|\chi(\Sigma)|\,,
\end{eqnarray*}
where $\|\,\cdot\,\|$ denotes the canonical norm on the Banach spaces $\hcb^2(G,\RR)$
and $\hb^2(\pi_1(\Sigma),\RR)$, 
while $|\chi(\Sigma)|$ appears as (half of) the $\ell^1$-simplicial norm of $[\Sigma,\partial\Sigma]$.
{\em Maximal representations} are those for which both inequalities are equalities,
while representations for which the second inequality is an equality  are {\em tight homomorphisms};
these constitute a very interesting class of homomorphisms which can be defined and studied 
in much greater generality, see \cite{Burger_Iozzi_Wienhard_tight, Hamlet_tighthol, Hamlet_tight}.  
In this article we set ourselves the goal to analyse 
the structure of representations for which the first inequality is an equality.

%

\begin{defi}[{\cite[Chapter~8, Definition~2.1]{Wienhard_thesis}}]
A representation $\rho: \pi_1(\Sigma) \to G$ is \emph{weakly maximal} if it
satisfies the equality
\begin{eqnarray}\label{WMDefIneq}
\<\rho^*(\kgb),[\Sigma,\partial\Sigma]\>=\|\rho^*(\kgb)\|_\infty\|\Sigma,\partial\Sigma\|_1
\end{eqnarray}
in the standard inequality of dual norms.
\end{defi}

\begin{rem}
Notice that the left hand side of \eqref{WMDefIneq} is the Toledo invariant $\T(\rho)$,
while the right hand side equals $2\|\rho^*(\kgb)\| \cdot |\chi(\Sigma)|$.

By definition a weakly maximal representation has non-negative Toledo number. 
Analogously one can define weakly minimal representations. Since the composition 
of  a representation with an orientation reversing outer automorphism changes 
the sign of the Toledo number the theory is completely analogous. 
\end{rem}


Whereas for maximal representations the Toledo invariant is a fixed number independent of $\rho$, for weakly maximal representations the right hand side of the equality is $ 2\|\rho^*(\kgb)\| \cdot |\chi(\Sigma)|$ and depends on $\rho$. In particular, there is (a priori) no restriction on the Toledo invariant of a weakly maximal representation.  

In order to get a better understanding of weakly maximal representations it is therefore important to give alternative characterizations. 
One such characterization is that a representation $\rho$ is weakly maximal if and only if there exists
$\lambda \geq 0$ such that 
\bq\label{eq:lambda}
\rho^*(\kgb) = \lambda \, \kappa_\Sigma^{\mathrm b}\,,
\eq
where $\ksib$ if the bounded fundamental class of $\Sigma$ (see \eqref{eq:bfc}).
This characterization was in effect first established in \cite[Corollary~3.4]{Wienhard_thesis} for compact 
surfaces and in \cite[Cor. 4.15]{Burger_Iozzi_Wienhard_tol} in the general case; 
a different proof using Bavard's duality was later obtained in \cite{Calegari}
(for a more thorough description of the relation with Calegari's work, 
see \cite[Section~4.6]{Burger_Iozzi_Wienhard_htt}).

The constant $\lambda$ appearing in \eqref{eq:lambda} is related to the Toledo invariant
by 
\bqn
\T(\rho)=\lambda\,|\chi(\Sigma)|\,.
\eqn 

When $\Sigma$ is a surface without boundary the Toledo invariant is a characteristic number with values in $ q_G^{-1}\ZZ$, where $q_G$ is a natural number depending only on $G$
(namely, $q_G$ is the smallest integer such that $q_G\kgb$ is an integral class, see Remark~\ref{rem:constants}) . On the other hand, when $\Sigma$ is a surface with boundary, the Toledo invariant ranges over the whole closed interval
$\big[-2\|\kgb\|\cdot |\chi(\Sigma)| , 2\|\kgb\|\cdot |\chi(\Sigma)|\big]$.

It is remarkable that for weakly maximal representations we can restrict the possible values of the Toledo-invariant by the following 
\begin{thm}[Rationality Theorem]\label{thm_intro:integer}
There is a natural number $\ell_G$ depending only on $G$, such that for every weakly
maximal representation $\rho: \pi_1(\Sigma) \to G$ we have $\T(\rho) \in \frac{|\chi(\Sigma)|}{\ell_G}\ZZ$. 
In particular, $\T$ takes only finitely many values on the set of weakly maximal
representations.
\end{thm}
\begin{rem} The integer $\ell_G$ depends in an explicit way on $q_G$ 
and on the degree of non-integrality of the restriction $\kgb|_H$ to various connected semisimple
subgroups $H$ of $G$, as well as on the cardinality of their center
(see the proof of Corollary~\ref{cor:4.7}).
\end{rem}

The Rationality Theorem is an essential ingredient in order to prove 
\begin{thm}\label{thm_intro: discrete_faithful}
Let $\rho: \pi_1(\Sigma) \to G$ be a weakly maximal representation and $\T(\rho)
\neq 0$. Then $\rho$ is faithful with discrete image.
\end{thm}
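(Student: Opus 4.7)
The plan is to reduce the statement to the known faithfulness and discreteness theorem for classical maximal representations by passing to the Zariski closure of the image. Let $H \subset G$ denote the Zariski closure of $\rho(\pi_1(\Sigma))$, a closed algebraic subgroup of $G$, and factor $\rho$ as $\iota \circ \rho'$ where $\rho': \pi_1(\Sigma) \to H$ has Zariski-dense image and $\iota: H \hookrightarrow G$ is the inclusion. Functoriality of bounded cohomology gives $\rho^*(\kgb) = (\rho')^*(\iota^*\kgb)$ in $\hb^2(\pi_1(\Sigma),\RR)$, so the weak-maximality identity \eqref{WMDefIneq} reads
\bqn
\T(\rho) = 2\|(\rho')^*(\iota^*\kgb)\| \cdot |\chi(\Sigma)|\,,
\eqn
and the hypothesis $\T(\rho) \neq 0$ guarantees that $\iota^*\kgb \in \hcb^2(H,\RR)$ is a nonzero class.

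The heart of the argument is to exhibit a Hermitian Lie group $L$, an algebraic quotient $p : H \to L$ with amenable kernel, and a Kähler-type class $\kappa_L^{\rm b} \in \hcb^2(L,\RR)$ such that $\iota^*\kgb = p^*(\kappa_L^{\rm b})$ with equal norms. This step combines the vanishing of continuous bounded cohomology on amenable radicals with the structure theorem for tight homomorphisms developed in \cite{Burger_Iozzi_Wienhard_tight} and refined by Hamlet \cite{Hamlet_tighthol, Hamlet_tight}; the weak-maximality identity read inside $H$ supplies precisely the norm-preservation needed to feed this machinery, even though $\iota$ itself is a priori not tight. Once such a factorization is secured, composition with $p$ produces a representation $\rho_L := p \circ \rho' : \pi_1(\Sigma) \to L$ whose Toledo invariant relative to $\kappa_L^{\rm b}$ saturates the classical Milnor--Wood bound,
\bqn
\T(\rho_L) = 2\|\kappa_L^{\rm b}\| \cdot |\chi(\Sigma)|\,,
\eqn
so that $\rho_L$ is a \emph{maximal} representation into the Hermitian Lie group $L$ in the classical sense, and \cite{Burger_Iozzi_Wienhard_tol} applies to yield that $\rho_L$ is injective with discrete image.

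The final step is to transfer these properties back to $\rho$. Injectivity is automatic, since $\rho_L = p \circ \rho'$ being injective forces $\rho'$, and hence $\rho$, to be injective. For discreteness, any sequence of pairwise distinct image points $\rho'(\gamma_n) \to h$ in $H$ would project to pairwise distinct points $\rho_L(\gamma_n) \to p(h)$ in $L$, contradicting discreteness of $\rho_L(\pi_1(\Sigma))$; since $H$ is closed in $G$, discreteness in $H$ yields discreteness in $G$. The main obstacle is therefore the middle step: the identification of $\iota^*\kgb$ as the pullback of a norm-preserving Kähler class from a Hermitian quotient of $H$. Unlike the situation of tight homomorphisms, where norm-preservation is part of the hypothesis, here it must be extracted from the single scalar identity \eqref{WMDefIneq}, and this requires a delicate interplay between the weak-maximality of $\rho$ on $\pi_1(\Sigma)$, the Zariski density of $\rho'$, and the fine structure of bounded Kähler classes on algebraic subgroups of $G$.
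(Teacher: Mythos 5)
There is a genuine gap, and it sits exactly where you yourself locate ``the main obstacle'': the claim that weak maximality can be upgraded, through the Zariski closure $H$ and the tight-homomorphism machinery, to a factorization $\iota^*\kgb=p^*(\kappa_L^{\rm b})$ \emph{with equal norms}, so that $\rho_L$ becomes classically maximal. This step is not merely unproven; it is false. Weak maximality is the single scalar identity $\T(\rho)=2\|\rho^*(\kgb)\|\,|\chi(\Sigma)|$, which constrains the norm of the pullback to $\pi_1(\Sigma)$ and carries no information about $\|\iota^*\kgb\|$; the norm preservation you need is precisely tightness, which the paper carefully separates from weak maximality (maximal $=$ weakly maximal \emph{and} tight). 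Concretely: take $G=\SL(2,\RR)^3$ and $\rho=(\rho_1,\rho_2,\rho_3)$ with the $\rho_i$ discrete, faithful, chosen so that the image is Zariski dense (pairwise non-conjugate, also after composing with the orientation-reversing automorphism), with $\rho_1,\rho_2$ orientation-preserving and $\rho_3$ orientation-reversing. Then $\rho^*(\kgb)=(1+1-1)\ksib=\ksib$, so $\rho$ is weakly maximal with $\T(\rho)=|\chi(\Sigma)|\neq 0$ by Proposition~\ref{prop:equiv}; but the Zariski closure is all of $G$, the K\"ahler radical is central, the quotient representation is essentially $\rho$ itself, and it is \emph{not} maximal ($\T(\rho)<3|\chi(\Sigma)|$). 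So no Hermitian quotient $L$ with your asserted properties exists, and the reduction to the maximality theorem of \cite{Burger_Iozzi_Wienhard_tol} collapses. Two further inaccuracies in the same step: the relevant kernel is the K\"ahler radical, which need \emph{not} be amenable (it can contain whole semisimple factors, e.g.\ a diagonally embedded graph of an orientation-reversing automorphism, or subgroups like $\SL(2,\CC)$ on which $\hcb^2$ vanishes); and the connected component of the real points of the Zariski closure can miss part of the image, so one must first pass to a finite-index subgroup $\Gamma=\rho^{-1}(L)$ of $\pi_1(\Sigma)$, as the paper does.

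The paper's actual argument avoids any appeal to maximality. Injectivity (Proposition~\ref{prop:injective}) is direct: by Proposition~\ref{prop:equiv}, weak maximality is equivalent to the quasimorphism identity $f_{n\k}\circ\widetilde\rho=\lambda(\tau\circ\widetilde\rho_h)+\psi$ on the central extension $\widehat\Gamma$ with $\lambda=\T(\rho)/|\chi(\Sigma)|\neq0$; on $p_\Sigma^{-1}(\ker\rho)$ this forces $\tau\circ\widetilde\rho_h$ to be a homomorphism, so $\rho_h(\ker\rho)$ is elementary (Lemma~\ref{lem:elementary}) and hence, being normal in a surface group, trivial. Discreteness (Proposition~\ref{prop:discrete}) requires the \emph{rationality} of $\T(\rho)$, which is the hard content of Theorem~\ref{thm:structure}: there the Zariski closure genuinely enters, but only to show via boundary maps, the generalized Maslov cocycle and the Hermitian triple product that $L/\Rad_{\kgb}(L)$ is of tube type and $\lambda\in\frac{1}{\ell_G}\ZZ$; rationality then puts the values of $f_{n\k}$ on $\widetilde\rho([\widehat\Gamma,\widehat\Gamma])\ker p$ in the discrete set $\frac1n\ZZ+\lambda\ZZ$, which kills the identity component of the closure of the image. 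Your final transfer steps (injectivity and discreteness pass between $\rho$, $\rho'$ and $\rho_L$, and from the closed subgroup $H$ to $G$) are sound, but they rest on a middle step that cannot be repaired: the theorem is strictly stronger than what reduction to classical maximal representations can deliver, since weakly maximal representations with $0<\T(\rho)$ non-maximal and Zariski dense exist.
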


The methods of proof for this theorem are very different from those used in the case of maximal representations \cite{Burger_Iozzi_Wienhard_tol} and give in particular a new and simpler proof of the injectivity and discreteness of maximal representations. 

Whereas the Zariski closure for maximal representations is always reductive and subject to strong restrictions, 
the Zariski closure of the image of a weakly maximal representation can be quite wild. In particular, it  
need not be reductive (see \S~\ref{subsec:nonred}).  
However, we can establish a structure theorem describing the Zariski closure of weakly maximal representations as follows. 
Given a closed subgroup $L < G$, we show that there exists a unique maximal
normal subgroup of $L$, that we call the {\em K\"ahler radical} $\rad_{\kgb}(L)$ of $L$,
on which $\kgb|_L$ vanishes. Since $\rad_{\kgb}(L)$ contains the solvable radical of $L$,
the quotient $L/\rad_{\kgb}(L)$ is semisimple.
We then establish the following as part of Theorem \ref{thm:structure}:
\begin{thm}\label{ZariskiClosure}
Let $\rho: \pi_1(\Sigma) \to G$ be a weakly maximal representation 
into a Lie group $G:=\gG(\RR)^\circ$ of Hermitian type, 
where $\gG$ is a connected algebraic group defined over $\RR$.
Let $L:=\lL(\RR)$, where $\lL$ is the Zariski closure of $\rho(\pi_1(\Sigma))$ in $\gG$,
and let $H=L/\ \rad_{\kgb}(L)$ be the quotient of $L$ by its K\"ahler radical.  
Assume that $\T(\rho)\neq 0$. 
Then:
\begin{enumerate}
\item  $H$ is adjoint of Hermitian type and all of its simple factors
are of tube type. 
\item The composition $\pi_1(\Sigma) \to L \to H$ is faithful with discrete
image. 
\end{enumerate}
\end{thm}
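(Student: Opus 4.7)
The plan is to establish (1) by understanding what the bounded K\"ahler class $\kgb|_L$ descends to on $H = L/\rad_{\kgb}(L)$, and then to derive (2) by applying Theorem~\ref{thm_intro: discrete_faithful} to the composed representation $\rho_H: \pi_1(\Sigma) \to H$. Throughout, write $\rho_L : \pi_1(\Sigma) \to L$ for the restriction of $\rho$ with target $L$ and $N := \rad_{\kgb}(L)$, and let $\lambda$ denote the weakly maximal constant, so that $\rho_L^*(\kgb|_L) = \lambda\,\ksib$ by \eqref{eq:lambda}, with $\lambda>0$ since $\T(\rho)\neq 0$.

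First I would argue that $H$ is adjoint of Hermitian type. By definition of $N$, the class $\kgb|_L$ is pulled back from a class $\kappa^b_H\in\hcb^2(H,\RR)$ through $\pi: L \to H$, and $\rho_H^*(\kappa^b_H)=\lambda\,\ksib$. Since bounded cohomology in degree $2$ vanishes on amenable groups, the solvable radical of $L$ lies in $N$, so $H$ is semisimple; since $\kgb|_L$ restricts trivially to the center of $L$, we also have $Z(L)\subset N$ and $H$ is adjoint. Decomposing the (adjoint, semisimple) $H$ as a direct product $H_1\times\dots\times H_k$ of simple factors and using K\"unneth, $\kappa^b_H=\sum_ic_i\,\pi_i^*(\kappa^b_{H_i})$. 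If some factor $H_i$ were not Hermitian, then $\hcb^2(H_i,\RR)=0$, forcing $c_i\kappa^b_{H_i}=0$, so $\kgb|_L$ would vanish on $\pi^{-1}\bigl(\prod_{j\neq i}H_j\bigr)\supsetneq N$, contradicting the maximality of $N$. Hence every $H_i$ is Hermitian; in particular so is $H$.

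Next I would establish the tube type assertion. The same maximality argument forces each $c_i\neq 0$. Since $\rho_L$ has Zariski-dense image in $L$, each projection $\rho_i:\pi_1(\Sigma)\to H_i$ has Zariski-dense image. From $\sum_ic_i\,\rho_i^*(\kappa^b_{H_i})=\lambda\,\ksib$ I would deduce, by a separation argument using the equivariant boundary maps into the Shilov boundaries of the $H_i$ (or alternatively by additivity of Gromov seminorms on the summands), that each $\rho_i^*(\kappa^b_{H_i})$ is itself proportional to $\ksib$ with non-zero coefficient. Equivalently, each $\rho_i$ is Zariski-dense, weakly maximal, and has non-zero Toledo invariant into a simple Hermitian target. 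I would then invoke the structural dichotomy—namely, that a Zariski-dense weakly maximal representation with non-zero Toledo invariant into a simple Hermitian Lie group forces that target to be of tube type—to conclude that every $H_i$, and hence $H$, is of tube type.

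Finally, since $H$ is now known to be Hermitian and $\rho_H^*(\kappa^b_H)=\lambda\,\ksib$ with $\T(\rho_H)=\lambda\,|\chi(\Sigma)|=\T(\rho)\neq 0$, the representation $\rho_H$ is weakly maximal with non-zero Toledo invariant. Theorem~\ref{thm_intro: discrete_faithful} then applies to $\rho_H$ and yields (2). The main obstacle in this plan is clearly the tube type step: one must rule out Zariski-dense weakly maximal representations with non-zero Toledo invariant into simple Hermitian groups of non-tube type. This presumably requires a careful analysis of boundary maps into the Shilov boundary, adapting the tube/non-tube dichotomy developed for maximal representations in \cite{Burger_Iozzi_Wienhard_tol} to the weakly maximal context, and is the technical heart on which the theorem rests.
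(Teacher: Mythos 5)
Your proposal breaks down at exactly the point you flag as the ``technical heart,'' and in fact at two points before that. First, the separation step is unjustified: from $\sum_i c_i\,\rho_i^*(\kappa^{\rm b}_{H_i})=\lambda\,\ksib$ in the infinite-dimensional space $\hb^2(\pi_1(\Sigma),\RR)$ you cannot conclude that each summand $\rho_i^*(\kappa^{\rm b}_{H_i})$ is individually proportional to $\ksib$; the Gromov seminorm is only subadditive, and no ``additivity on summands'' holds. The paper never performs this separation: it keeps the identity as a single almost-everywhere cocycle identity $\lambda\beta_{\partial\DD}(x,y,z)=\sum_j\nu_j\beta_{\cs_j}(\varphi_j(x),\varphi_j(y),\varphi_j(z))$ via the boundary map into the product Shilov boundary (equation \eqref{eq:sum}). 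Second, even granting separation, the ``structural dichotomy'' you invoke (Zariski-dense weakly maximal with $\T\neq 0$ into a simple Hermitian group forces tube type) is precisely assertion (1) in the simple case, so citing it is circular; and the adaptation you sketch cannot run with your inputs, because your K\"unneth decomposition only yields \emph{real} coefficients $c_i$. The paper's mechanism is to exponentiate \eqref{eq:sum} through the Hermitian triple product relation \eqref{eq:http}, choosing an integer $m$ with $m\nu_j=n_jd_j$ so that the right-hand side becomes a rational function taking finitely many values on a Zariski-dense subset of $\cs^3$; this requires $\nu_j\in\QQ^\times$ with controlled denominators, which is the content of the rationality theory of \S\ref{sec:rationality} (Proposition~\ref{prop:adjoint} via Wigner's theorem and the K\"ahler--Levi factor, Corollary~\ref{cor:4.7}) and is entirely absent from your argument.

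The third gap is in your deduction of (2). Applying Theorem~\ref{thm_intro: discrete_faithful} to $\rho_H$ hides the fact that discreteness there rests on Proposition~\ref{prop:discrete}, which demands $\T\in\QQ^\times$; when $\partial\Sigma\neq\emptyset$ this rationality is \emph{not} automatic (cf.\ Remark~\ref{rem:Tol_inv_not_zero}) and in the paper it is a consequence, not a hypothesis, of the tube-type statement: once each $H_j$ is of tube type, $\beta_{\cs_j}$ takes values in $\frac12\ZZ$ (Clerc--{\O}rsted), whence $\lambda\in\frac{1}{\ell_G}\ZZ$ via \eqref{eq:sum} and Corollary~\ref{cor:4.7}. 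So the logical order must be: rationality of the coefficients $\nu_j$ $\Rightarrow$ tube type $\Rightarrow$ rationality of $\T(\rho)$ $\Rightarrow$ discreteness, whereas your plan needs rationality of the Toledo invariant before tube type is known. There is also a smaller mismatch: weak maximality of $\rho_H$ should be measured against $H$'s own bounded K\"ahler class, not against the class $u=\sum_j\nu_jp_j^*(\kappa^{\rm b}_{H_j})$ with possibly mixed-sign coefficients; the paper sidesteps this by applying Propositions~\ref{prop:injective} and \ref{prop:discrete} directly to the rational class $u$ in the general $\k$-weakly-maximal framework of \S\ref{sec:milnor_wood}, rather than to the K\"ahler class of $H$.
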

Recall that a Hermitian Lie group is of {\em tube type} if the associated
symmetric space is of tube type, that is biholomorphic to $\RR^n+iC$, for some $n\in\NN$,
where $C\subset\RR^n$ is an open convex cone.

\begin{rem}\label{rem:tr}
In the above Theorems~\ref{thm_intro: discrete_faithful} and \ref{ZariskiClosure} 
it is essential that the Toledo invariant is non-zero. However
the class of weakly maximal representations with $\T(\rho) = 0$ is also of
interest. This is precisely the set where
$\rho^*(\kgb) = 0$. In the case when $G = \PU(1,n)$ such representations have
been studied and characterized in \cite{Burger_Iozzi_tr} 
as representations that preserve a totally real subspace of complex hyperbolic space $\Hh_\CC^n$.
\end{rem}

\subsection{Comparison with Other Classes of Representations}\label{1.3}
Consider the representation variety $\hom(\pi_1(\Sigma),G)$. 
%

The subset of weakly maximal representations
$\hom_{wm}(\pi_1(\Sigma), G)$ decomposes as a disjoint union
\bqn
\hom_{wm}(\pi_1(\Sigma), G)=\hom^*_{wm}(\pi_1(\Sigma), G)\sqcup\hom_0(\pi_1(\Sigma), G)
\eqn
of the set $\hom^*_{wm}(\pi_1(\Sigma), G)$ of weakly maximal representations with non-zero 
Toledo invariant and the set of representations with vanishing Toledo invariant
\bqn
\hom_0(\pi_1(\Sigma),G):=\{\rho:\pi_1(\Sigma)\to G:\,\rho^\ast(\kgb)=0\}\,.
\eqn
The latter contains $\hom(\pi_1(\Sigma),L)$ for every closed subgroup $L<G$
for which $\kgb|_L=0$.  
In particular, representations in $\hom_0(\pi_1(\Sigma),G)$ are not necessarily injective,
and their images are not necessarily discrete (see Remark~\ref{rem:tr}),
while, according to Theorem~\ref{thm_intro: discrete_faithful},
$\hom^*_{wm}(\pi_1(\Sigma), G)$ is contained in the set
$\hom_{d,i}(\pi_1(\Sigma), G)$ of injective homomorphisms
with discrete image.  
We prove (cf. Corollary \ref{cor:wm*}):

\begin{thm}\label{thm:max-wm-di-wm}  The following 
\bqn
\xymatrix{
& &\hom_{d,i}(\pi_1(\Sigma), G)\\
\hom_{max}(\pi_1(\Sigma),G)\ar@{^{(}->}[r]
&\hom^*_{wm}(\pi_1(\Sigma), G)\ar@{^{(}->}[ur]\ar@{^{(}->}[dr]\\
& & \hom_{wm}(\pi_1(\Sigma), G)
}
\eqn
is a diagram of $\aut(\pi_1(\Sigma))$-invariant closed subsets of the representation variety 
$\hom(\pi_1(\Sigma), G)$.
\end{thm}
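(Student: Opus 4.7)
The plan is to verify in turn the three asserted inclusions, the $\aut(\pi_1(\Sigma))$-invariance of each of the four sets in the diagram, and their closedness in $\hom(\pi_1(\Sigma),G)$. The main analytic tool throughout is the characterization of weak maximality by the equation $\rho^*\kgb=\lambda\ksib$ with $\lambda=\T(\rho)/|\chi(\Sigma)|\geq 0$, recorded as \eqref{eq:lambda}, together with the continuity of the Toledo invariant.

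\emph{Inclusions.} Inserting $\rho\in\hom_{max}$ into the refined Milnor--Wood chain $\T(\rho)\leq 2\|\rho^*\kgb\|\cdot|\chi(\Sigma)|\leq 2\|\kgb\|\cdot|\chi(\Sigma)|$ forces equality at both steps, so $\rho$ lies in $\hom_{wm}$ with $\T(\rho)>0$, giving $\hom_{max}\subset\hom_{wm}^*$. The inclusion $\hom_{wm}^*\subset\hom_{d,i}$ is exactly Theorem~\ref{thm_intro: discrete_faithful}, while $\hom_{wm}^*\subset\hom_{wm}$ is tautological.

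\emph{Invariance.} For $\phi\in\aut(\pi_1(\Sigma))$, naturality of pullbacks yields $(\rho\circ\phi)^*\kgb=\phi^*\rho^*\kgb$, where $\phi^*$ is an isometric automorphism of $\hb^2(\pi_1(\Sigma),\RR)$. Since by Dehn--Nielsen--Baer the action of $\phi^*$ sends $\ksib$ to $\varepsilon(\phi)\ksib$ for some $\varepsilon(\phi)\in\{\pm1\}$, the condition $\rho^*\kgb\in\RR\cdot\ksib$ is preserved, yielding invariance of $\hom_{wm}$, $\hom_{wm}^*$ and $\hom_{max}$, with weakly maximal interpreted symmetrically to absorb the weakly minimal orbits, as in the remark following the main definition. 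Invariance of $\hom_{d,i}$ is immediate since automorphisms of $\pi_1(\Sigma)$ are bijections.

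\emph{Closedness.} Continuity of $\T$ gives that $\hom_{max}=\T^{-1}\bigl(2\|\kgb\|\cdot|\chi(\Sigma)|\bigr)$ is closed. For the two weakly maximal sets, the pullback map $\rho\mapsto\rho^*\kgb\in\hb^2(\pi_1(\Sigma),\RR)$ is continuous: a representative cocycle is obtained by integrating $\omega_\Xx$ along a $\rho$-equivariant smooth map $\widetilde\Sigma\to\Xx$ which can be chosen continuously in $\rho$ via a partition-of-unity construction. Given $\rho_n\to\rho$ with $\rho_n\in\hom_{wm}$ and $\rho_n^*\kgb=\lambda_n\ksib$, continuity yields $\lambda_n\to\lambda\geq 0$ and $\rho^*\kgb=\lambda\ksib$, so $\rho\in\hom_{wm}$. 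For $\hom_{wm}^*$, Theorem~\ref{thm_intro:integer} confines the non-zero $\lambda_n$ to the discrete set $\ell_G^{-1}\NN\setminus\{0\}$, which prevents accumulation at $0$ and forces $\lambda\neq0$. Closedness of $\hom_{d,i}$ for surface group targets in semisimple Lie groups follows by classical Jørgensen--Zassenhaus type limit arguments, as invoked in the maximal case in \cite{Burger_Iozzi_Wienhard_tol}.

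The main obstacle I anticipate is the continuity of $\rho\mapsto\rho^*\kgb$, since the Gromov seminorm on bounded cohomology is only semicontinuous under limits of representations. This is handled by working at the cochain level with continuously varying equivariant maps and by exploiting the integrality statement of Theorem~\ref{thm_intro:integer}, which confines the relevant scalar $\lambda$ to a locally finite subset of $\RR$ and thereby sidesteps the pathologies of the norm.
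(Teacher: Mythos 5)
Your overall route coincides with the paper's (the inclusions via the refined Milnor--Wood chain and Corollary~\ref{cor:inj_discrete}, closedness of $\hom_{wm}$ via continuity of $\rho\mapsto\rho^*(\kgb)$, closedness of $\hom^*_{wm}$ via Theorem~\ref{thm_intro:integer} and continuity of $\T$, exactly as in Corollaries~\ref{cor:wm_closed} and \ref{cor:wm*}), but the central analytic step has a genuine gap: you never specify in which topology on $\hb^2(\pi_1(\Sigma),\RR)$ the pullback is continuous, and your limiting argument needs strictly more than continuity. To pass from $\rho_n^\ast(\kgb)=\lambda_n\ksib$ to $\rho^\ast(\kgb)=\lambda\ksib$ you must know that limits in $\hb^2(\pi_1(\Sigma),\RR)$ are unique and that the ray $\{t\ksib:\,t\geq0\}$ is closed; convergence of cochain representatives (your partition-of-unity construction) does not by itself let you compare the limit classes, precisely because the space of coboundaries might fail to be closed in the topology in which the representatives converge. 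The paper's \S~\ref{sec:rep_var} is organized around exactly this point: it puts the weak-$\ast$ topology on $\ell^\infty(\pi_1(\Sigma)^n)$, proves that $\im\,\delta^1$ is weak-$\ast$ closed (norm-closedness plus Rudin's theorem), so that $\hb^2(\pi_1(\Sigma),\RR)$ is a Hausdorff topological vector space, and obtains continuity by dominated convergence applied to $c_n(x,y)=c_\kappa(\rho_n(x),\rho_n(y))$ for one fixed bounded \emph{continuous} cocycle $c_\kappa$ on $G^2$ --- no equivariant maps are needed at all. Your claim that the integrality of Theorem~\ref{thm_intro:integer} ``sidesteps the pathologies of the norm'' is misplaced for $\hom_{wm}$ itself: integrality plays no role in Corollary~\ref{cor:wm_closed}, and enters only afterwards (as you do use it, correctly, matching Corollary~\ref{cor:wm*}) to bound $\T\geq|\chi(\Sigma)|/\ell_G$ away from zero on $\hom^*_{wm}$. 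Note also that $\lambda_n\to\lambda$ should be extracted from the continuity of $\T$, since $\lambda_n=\T(\rho_n)/|\chi(\Sigma)|$, not from the unspecified continuity of the pullback.

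The remaining pieces are fine and agree with what the paper proves or leaves implicit: the first inclusion is the equality case of the refined Milnor--Wood chain, the second is Corollary~\ref{cor:inj_discrete}, closedness of $\hom_{max}$ follows from continuity of $\T$, and closedness of $\hom_{d,i}$ is the classical Chuckrow--Goldman--Millson type statement, which the paper likewise only cites. Two caveats on invariance, which the paper asserts without written proof: your Dehn--Nielsen--Baer step is available only for $\partial\Sigma=\emptyset$ --- when $\partial\Sigma\neq\emptyset$, $\aut(\pi_1(\Sigma))$ is the automorphism group of a free group, strictly larger than any mapping class group, and it is not clear that it preserves the line $\RR\,\ksib$, since $\ksib$ remembers the peripheral structure; and an orientation-reversing automorphism flips the sign of $\lambda$, so invariance of $\hom_{wm}$ as literally defined (with $\lambda\geq0$) holds only in the symmetric reading you adopt. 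These should be flagged explicitly rather than absorbed into ``interpreted symmetrically''.
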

If $G$ is real algebraic, 
$\hom_{max}(\pi_1(\Sigma), G)$ is a real semi-algebraic subset of $\hom(\pi_1(\Sigma), G)$,
\cite[Corollary~14]{Burger_Iozzi_Wienhard_tol},
but we do not have such precise information on the other sets appearing in Theorem~\ref{thm:max-wm-di-wm}.

\medskip
A prominent role in higher Teichm\"uller theory is played by Anosov representations, 
a notion introduced by F.~Labourie in his study of Hitchin representations \cite{Labourie_anosov},
then studied for Hermitian Lie groups in \cite{Burger_Iozzi_Labourie_Wienhard, Burger_Iozzi_Wienhard_anosov}
and in greater generality in \cite{Guichard_Wienhard_anosov}. 
The property of a representation to be Anosov is defined 
with respect to a parabolic subgroup $P<G$. 
In the context of weakly maximal representations the interest lies on representations $\rho: \pi_1(\Sigma) \to G$ which are Anosov with respect to the parabolic group $Q$
which is the stabilizer in $G$ of a point in the Shilov boundary $\cs$ 
of the bounded symmetric domain realization of $\mathcal{X}$ 
\cite{Burger_Iozzi_Labourie_Wienhard, Burger_Iozzi_Wienhard_anosov, Guichard_Wienhard_anosov}. 
We will call such an Anosov representation {\em Shilov-Anosov}. 


In the case in which $\partial \Sigma = \emptyset$,
if we denote by $\hom_{\cs\text{-}An}(\pi_1(\Sigma),G)$ the set of such representations
and by $\hom^*_{\cs\text{-}An}(\pi_1(\Sigma),G)$ the subset of Shilov-Anosov representations
with positive Toledo invariant, we have 
\bqn
\small{\xymatrix{
\hom_{max}(\pi_1(\Sigma),G)\ar@{^{(}->}[r]
&\hom^*_{\cs\text{-}An}(\pi_1(\Sigma),G)\ar@{^{(}->}[r]
&\hom_{\cs\text{-}An}(\pi_1(\Sigma),G).
}}
\eqn
Here $\hom_{max}(\pi_1(\Sigma),G)$ is a union of components
and $\hom_{\cs\text{-}An}(\pi_1(\Sigma),G)$ is an open subset of $\hom(\pi_1(\Sigma),G)$,
\cite{Burger_Iozzi_Labourie_Wienhard, Burger_Iozzi_Wienhard_anosov, Guichard_Wienhard_anosov}. 

On the other hand we prove in Corollary \ref{cor:closure} the following:

\begin{thm}\label{cssn-wm} If $G$ is of tube type and $\partial\Sigma=\emptyset$,
then 
\bqn
\xymatrix{
\overline{\hom_{\cs\text{-}An}(\pi_1(\Sigma),G)}\ar@{^{(}->}[r]
&\hom_{wm}(\pi_1(\Sigma),G)\,.
}
\eqn
\end{thm}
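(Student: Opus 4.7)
\textbf{Proof plan for Theorem \ref{cssn-wm}.}

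My strategy rests on two reductions. First, Theorem \ref{thm:max-wm-di-wm} tells us that $\hom_{wm}(\pi_1(\Sigma),G)$ is a closed subset of $\hom(\pi_1(\Sigma),G)$, so it suffices to establish the pointwise inclusion
\bqn
\hom_{\cs-An}(\pi_1(\Sigma),G)\subseteq\hom_{wm}(\pi_1(\Sigma),G)\,;
\eqn
the statement about the closures then follows automatically. Second, by the characterization of weak maximality discussed after Theorem \ref{thm_intro: discrete_faithful} (equation \eqref{eq:lambda}), a representation $\rho$ is weakly maximal precisely when $\rho^*(\kgb)$ is a non-negative multiple of the bounded fundamental class $\ksib$. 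So I need only show that for every Shilov-Anosov $\rho:\pi_1(\Sigma)\to G$ one has $\rho^*(\kgb)=\lambda\cdot\ksib$ for some $\lambda\in\RR$ (passing, if necessary, to a composition with an orientation-reversing automorphism of $\pi_1(\Sigma)$ to ensure $\lambda\geq 0$).

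To compute $\rho^*(\kgb)$ I would exploit the boundary theory built into the Anosov property. By hypothesis $\rho$ is Anosov with respect to the stabilizer of a point in $\cs$, so there is a continuous, $\rho$-equivariant, transverse boundary map $\phi:\partial\pi_1(\Sigma)\to\cs$. Fix a hyperbolization of $\Sigma$ to identify $\partial\pi_1(\Sigma)$ with $S^1$, on which $\pi_1(\Sigma)$ acts by orientation-preserving homeomorphisms. Since $G$ is of tube type, the bounded K\"ahler class $\kgb$ is represented on $\cs$ by the alternating $G$-invariant Bergmann (Maslov) cocycle $\beta_\cs$, which is bounded and locally constant on the open $G$-orbits of $\cs^3$. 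Because $\phi$ is transverse, any triple of distinct points in $S^1$ is sent to a triple in $\cs$ lying in an open orbit; by equivariance and local constancy, $\phi^*\beta_\cs$ depends only on the cyclic ordering of its arguments. Hence $\phi^*\beta_\cs$ is a scalar multiple of the orientation cocycle on $S^1$, which represents $\ksib$. Using the standard Burger--Monod type representation of bounded cohomology of $\pi_1(\Sigma)$ via alternating $\pi_1(\Sigma)$-invariant Borel cocycles on $(S^1)^3$, together with naturality of pullback and the compatibility of the boundary resolution with the amenable action of $G$ on $\cs$, this realizes $\rho^*(\kgb)$ as $\lambda\cdot\ksib$ in $\hb^2(\pi_1(\Sigma),\RR)$, yielding weak maximality.

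The main obstacle is precisely the locally-constant behaviour of $\beta_\cs$ on $\cs^3$, which is where the tube-type assumption is used essentially: for groups not of tube type the Bergmann cocycle has a continuous range on the open orbits, and transversality of $\phi$ alone is then insufficient to force $\phi^*\beta_\cs$ to be a scalar multiple of the orientation cocycle on $S^1$. A secondary technical point is verifying that the naive pullback $\phi^*\beta_\cs$ does represent the cohomology class $\rho^*(\kgb)$ and not merely a formal Borel cocycle; this rests on choosing resolutions that simultaneously exploit the amenability of the minimal parabolic action on $\cs$ and the $\pi_1(\Sigma)$-equivariance of $\phi$, but is standard once the tube-type structure is in hand.
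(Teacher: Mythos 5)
Your proposal follows the paper's own proof almost step for step: reduce via closedness of $\hom_{wm}(\pi_1(\Sigma),G)$ (Corollary \ref{cor:wm_closed}) to the pointwise inclusion; use the continuous, equivariant, transverse boundary map $\varphi:\partial\DD\to\cs$ furnished by the Shilov-Anosov property; exploit the tube-type hypothesis through the fact that the generalized Maslov cocycle $\beta_\cs$ is locally constant on pairwise transverse triples --- the paper phrases this by saying the connected set $(\partial\DD)^{3,+}$ is mapped by $\varphi\times\varphi\times\varphi$ into a single connected component of $\cs^{(3)}$, so that $\beta_\cs\circ\varphi^3$ is a multiple of the orientation cocycle; and implement the pullback in bounded cohomology via the boundary resolution \eqref{eq:resol} (the machinery of \cite{Burger_Iozzi_app}) to get $\rho^\ast(\kgb)=\lambda\,\ksib$, whence weak maximality by Proposition \ref{prop:equiv}. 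Your identification of where tube type enters is exactly right.

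The one genuine defect is your treatment of the sign of $\lambda$. Composing $\rho$ with an orientation-reversing automorphism of $\pi_1(\Sigma)$ produces a \emph{different} representation; proving that this composition is weakly maximal says nothing about $\rho$ itself, so this device cannot ``ensure $\lambda\geq 0$.'' Nor can the case $\lambda<0$ be repaired: weak maximality forces $\T(\rho)=\|\rho^\ast(\kgb)\|\,|\chi(\Sigma)|\geq 0$, and a Shilov-Anosov representation with negative Toledo invariant (obtained, for instance, by precomposing one of positive Toledo invariant with an orientation-reversing automorphism) is weakly \emph{minimal}, not weakly maximal. This is precisely why what the paper actually proves (Corollary \ref{cor:closure}) is the inclusion $\overline{\hom^+_{\cs-An}(\pi_1(\Sigma),G)}\subset\hom_{wm}(\pi_1(\Sigma),G)$ for the set of Shilov-Anosov representations with $\T(\rho)\geq 0$; the unrestricted statement of Theorem \ref{cssn-wm} must be read with this restriction. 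Your argument becomes correct exactly as written once you drop the orientation-reversing trick and restrict throughout to representations with $\T(\rho)\geq 0$, i.e.\ $\lambda\geq 0$, which is all the theorem, properly interpreted, asserts.
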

If $G=\Sp(2n,\RR)$ with $n\geq2$, we will see that $\hom_{\cs\text{-}An}(\pi_1(\Sigma),G)$ is open but not closed. The study of the closure of the set of Anosov representations is a completely open problem. Since the set of weakly maximal representations is closed (see Corollary~\ref{cor:wm_closed}), 
it provides a natural framework in which to study limits
of Shilov-Anosov representations. We refer the reader to \S~\ref{subsec:limit} for examples
of weakly maximal representations into $\Sp(2n,\RR)$ for $n\geq6$, that are not Shilov-Anosov, 
but are the limit of Shilov-Anosov representations.

For a closed surface $\Sigma$ and a Hermitian group $G$ of tube type
we  summarize the relation of various special subsets of the representation variety  and results from \cite{Labourie_anosov, Burger_Iozzi_Wienhard_tight}
in the following diagram. Here we denote by $\hom_{red}(\pi_1(\Sigma), G))$ 
the set of representations with reductive Zariski closure and by
$\hom_{Hitchin}(\pi_1(\Sigma), G))$ the Hitchin components in case $G$ is locally
isomorphic to 
$\Sp(2n,\RR)$ (and the empty set otherwise). Then we have the following
inclusions:
\bqn
\xymatrix{
                      &                 & \hom_{\cs\text{-}An} \ar@{^{(}->}[r]&
\hom_{wm}&\\
\hom_{Hitchin}  \ar@{^{(}->}[r] &\hom_{max} \ar@{^{(}->}[r] \ar@{^{(}->}[dr]&
\hom^*_{\cs\text{-}An} \ar@{^{(}->}[r] \ar@{^{(}->}[u]& \hom^*_{wm} \ar@{^{(}->}[r] \ar@{^{(}->}[u]&
\hom_{d,i}\\
&& \hom_{tight} \ar@{^{(}->}[r] & \hom_{red}
}
\eqn

\section{The Toledo Invariant}\label{sec:tolinv}
We recall here the definition of the Toledo invariant in a general context and 
indicate how the Milnor--Wood inequality follows from known isometric isomorphisms in bounded cohomology.

\medskip
Let $\Sigma$ be a compact oriented surface with (possibly empty) boundary $\partial\Sigma$.
Let $G$ be a locally compact group, $\k\in\hcb^2(G,\RR)$ a fixed continuous bounded class and
$\rho:\pi_1(\Sigma)\to G$ a homomorphism.  
Recall that there is an isometric isomorphism
\bqn
\xymatrix@1{
g_\Sigma:\hb^2(\pi_1(\Sigma),\RR)\ar[r]
&\hb^2(\Sigma,\RR)
}
\eqn
whose existence in general is due to Gromov \cite{Gromov_bounded};
in our case, the universal covering $\widetilde\Sigma$ is contractible and the existence
and isometric property of $g_\Sigma$ are easily established.  Next, the inclusion of pairs
$(\Sigma,\emptyset)\hookrightarrow(\Sigma,\partial\Sigma)$ gives rise to a map
\bqn
\xymatrix@1{
j_{\partial\Sigma}:\hb^2(\Sigma,\partial\Sigma,\RR)\ar[r]
&\hb^2(\Sigma,\RR)
}
\eqn
where the left hand side refers to bounded relative cohomology.
Since every connected component of $\partial\Sigma$ is a circle and 
hence has amenable fundamental group, 
the map $j_{\partial\Sigma}$ is an isometric isomorphism, \cite[Theorem~1.2]{BBFIPP}.
We hence define
\bqn
\T_\k(\rho):=\langle(j_{\partial\Sigma})^{-1}g_\Sigma(\rho^\ast(\k)),[\Sigma,\partial\Sigma]\rangle\,,
\eqn
where $[\Sigma,\partial\Sigma]$ is the relative fundamental class.  
Since $g_\Sigma$ and $j_{\partial\Sigma}$ are isometries, we deduce that
\bqn
|\T_\k(\rho)|\leq\|\rho^\ast(\k)\|\,\|[\Sigma,\partial\Sigma]\|_1\,,
\eqn
where the second factor refers to the norm in relative $\ell^1$-homology.
Since $\|[\Sigma,\partial\Sigma]\|_1=2|\chi(\Sigma)|$ and the pullback is norm decreasing, then 
\bq\label{eq:mw}
|\T_\k(\rho)|\leq 2\|\rho^\ast(\k)\|\,|\chi(\Sigma)|\leq 2\|\k\|\,|\chi(\Sigma)|\,.
\eq
In view of \eqref{eq:mw}, we give the following definition:

\begin{defi}\label{defi:k-wealy-maximal}A representation $\rho:\pi_1(\Sigma)\to G$ is 
$\k$-{\em weakly maximal} if $\T_\k(\rho)=2\|\rho^\ast(\k)\|\,|\chi(\Sigma)|$.
\end{defi}

\begin{rem} If $G$ is of Hermitian type and $\k=\kgb$ is the bounded K\"ahler class, 
then $\|\kgb\|=\rk(G)/2$ (see \cite[\S~2.1]{Burger_Iozzi_Wienhard_tol}) 
and one obtains for the corresponding Toledo invariant $\T(\rho)$
the familiar Milnor--Wood inequality
\bqn
|\T(\rho)|\leq\rk(G)\,|\chi(\Sigma)|\,.
\eqn
This will be used in the examples in \S~\ref{sec:examples}, but otherwise
never in the paper the explicit computation of the norm is used.
\end{rem}

\section{A Characterization of  Weakly Maximal Representations}\label{sec:milnor_wood}
In this section we develop the general framework for the study of weakly maximal representations 
and establish some of their basic properties.  In particular we show that 
weak maximality of a representation $\rho:\pi_1(\Sigma)\to G$
is reflected in a relation between its lift to appropriate central extensions
of $\pi_1(\Sigma)$ and $G$ and canonically defined quasimorphisms
on those central extensions (cf. Proposition~\ref{prop:equiv}).

\subsection{The Central Extension of $\pi_1(\Sigma)$}\label{subsec:pione}
Let $h$ be any complete hyperbolic structure on the interior $\Sigma^\circ$ of $\Sigma$ 
compatible with the fixed orientation
and let $\rho_h:\pi_1(\Sigma)\to\PU(1,1)$ be the corresponding holonomy representation.
Then 
\bq\label{eq:bfc}
\ksib:=\rho_h^\ast(\kpub)\in\hb^2(\pi_1(\Sigma),\RR)
\eq
 is independent of the choice of $h$
and is called the {\em (real) bounded fundamental class} of $\Sigma$, 
\cite[\S~8.2]{Burger_Iozzi_Wienhard_tol}.
If $\widehat\Gamma$ denotes the central extension 
\bq\label{eq:central extension}
\xymatrix{
0\ar[r]
&\ZZ\ar[r]
&\widehat\Gamma\ar[r]^-{p_\Sigma}
&\pi_1(\Sigma)\ar[r]
&0
}
\eq
corresponding to the positive generator of $\h^2(\pi_1(\Sigma),\ZZ)$ 
if  $\partial\Sigma=\emptyset$ and $\widehat\Gamma=\pi_1(\Sigma)$ otherwise,
then $\rho_h$ lifts to a homomorphism $\widetilde\rho_h:\widehat\Gamma\to\widetilde{\PU(1,1)}$,
where we think of the universal covering $\widetilde{\PU(1,1)}$ as contained in the group 
$\homeoz{\RR}$ of increasing homeomorphisms of the real line $\RR$ commuting with integer translations.
Denote by $\tau:\widetilde{\PU(1,1)}\to\RR$ the Poincar\'e translation quasimorphism
defined by $\tau(f):=\lim_{n\to\infty}\frac{f^n(x)-x}{n}$ for $x\in\RR$.
Then the composition $\tau\circ\widetilde\rho_h$ is a homogeneous quasimorphism which is $\ZZ$-valued
\bq\label{eq:Zvalued}
\tau\circ\widetilde\rho_h:\widehat\Gamma\to\ZZ\,,
\eq
since every element in the image of $\rho_h$ has a fixed point.
Moreover, the homogeneous quasimorphism $\tau$ corresponds in real bounded
cohomology to the pullback, via the projection $p:\widetilde{\PU(1,1)}\to\PU(1,1)$, 
of the real bounded cohomology class $\kpub$
\bq\label{eq:d tau}
[d\tau]=p^\ast(\kpub)\,,
\eq
and hence, again in real bounded cohomology,
\bq\label{eq:hom_qm}
[d(\tau\circ\widetilde\rho_h)]=p_\Sigma^\ast(\ksib)\,.
\eq
While $\tau\circ\widetilde\rho_h$ depends on the hyperbolization $\rho_h$ and
on its lift $\widetilde\rho_h$, its restriction $(\tau\circ\widetilde\rho_h)|_{\Lambda}$
to $\Lambda:=[\widehat\Gamma,\widehat\Gamma]$ is independent of the above choices,
(as the equivalence in the Proposition~\ref{prop:equiv} below shows).  

\subsection{The Central Extension for a Locally Compact Group $G$}\label{subsec:G}
We now turn to the construction of the central extension (depending on a ``rational" class)
and of the associated quasimorphism for a general locally compact group.
The relation with \S~\ref{subsec:pione}  is outlined in \S~\ref{subsec:herm}.

\medskip
\begin{defi}\label{defi:k-rational}  We say that a bounded cohomology class $\k$ is {\em rational}
if there is an integer $n\geq1$ such that $n\k$ is representable 
by a bounded $\ZZ$-valued Borel cocycle.
\end{defi}

Let $\k$ be rational and let $c:G^2\to\ZZ$ be a normalized bounded Borel cocycle representing $n\k$.
Endow $G\times\ZZ$ with the group structure defined by the Borel map
\bqn
(g_1,n_1)(g_2,n_2):=(g_1g_2,n_1+n_2+c(g_1,g_2))
\eqn
and let $G_{n\k}$ denote the Borel group $G\times\ZZ$ endowed with 
the unique compatible locally compact group topology \cite{Mackey_57}.  
This gives the topological central extension
\bq\label{eq:central extension G}
\xymatrix@1{
 0\ar[r]
&\ZZ\ar[r]^i
&G_{n\k}\ar[r]^{p_{n\k}}
&G\ar[r]
&e\,.}
\eq
Then $f'_{n\k}(g,m)=\frac{1}{n}m$ is a Borel quasimorphism such that $df'_{n\k}$ represents 
$p_{n\k}^\ast(\k)$.  Its homogenization $f_{n\k}:G_{n\k}\to\RR$ is a continuous
homogeneous quasimorphism \cite[Lemma~7.4]{Burger_Iozzi_Wienhard_tol} such that 
\bq\label{eq:dfk}
[df_{n\k}]=p_{n\k}^\ast(\k)\,,
\eq
and 
\bq\label{eq:values of fk}
f_{n\k}(i(m))=\frac{1}{n}m\,.
\eq
%
%
The homogeneous quasimorphisms $f_{n\k}$ are well defined in the following sense.
Let $\zeta=n\k$ be the smallest integer multiple of $\k$ representable 
by a $\ZZ$-valued bounded Borel cocycle and let $\ell\in\ZZ$, $\ell\neq0$.
Then the map $G\times\ZZ\to G\times\ZZ$ defined by $(g,m)\mapsto(g,\ell m)$
induces a continuous homomorphism that identifies $G_\zeta$ with 
a closed subgroup of finite index in $G_{\ell\zeta}$.  Via this identification,
the quasimorphism $f_\zeta$ is the restriction to $G_\zeta$ of $f_{\ell\zeta}$.

\medskip
The following result describes the property of being weakly maximal
in terms of quasimorphisms and will be essential in the sequel.

\begin{prop}\label{prop:equiv} Let $G$ be a locally compact group and
$\k\in\hb^2(G,\RR)$ a rational class with $n\k$ integral.  
Let $\rho:\pi_1(\Sigma)\to G$ be a homomorphism
and $\widetilde\rho:\widehat\Gamma\to G_{n\k}$ some lift,
where $\widehat\Gamma$ and $G_{n\k}$ are defined 
respectively in \eqref{eq:central extension} and \eqref{eq:central extension G}.
The following are equivalent:
\be
\item $\rho$ is $\kappa$-weakly maximal;
\item there exists $\lambda\geq0$ such that $\rho^\ast(\k)=\lambda\ksib$;
\item there exists $\lambda\geq0$ and $\psi\in\hom(\widehat\Gamma,\RR)$ such that 
\bqn
f_{n\k}\circ\widetilde\rho=\lambda(\tau\circ\widetilde\rho_h)+\psi\,;
\eqn
\item there exists $\lambda\geq0$ such that 
\bqn
(f_{n\k}\circ\widetilde\rho)|_{\Lambda}=\lambda (\tau\circ\widetilde\rho_h)|_{\Lambda}\,.
\eqn
\ee
\end{prop}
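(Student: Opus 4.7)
The plan is to prove (1) $\Leftrightarrow$ (2) and then (2) $\Rightarrow$ (3) $\Rightarrow$ (4) $\Rightarrow$ (2), closing the loop. The chain (2) $\Leftrightarrow$ (3) $\Leftrightarrow$ (4) is a dictionary between the bounded cohomology class $\rho^\ast(\k)$ on $\pi_1(\Sigma)$ and quasimorphism data on the central extension $\widehat\Gamma$, and is essentially formal; the analytically substantive content is the equivalence (1) $\Leftrightarrow$ (2).

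For (1) $\Leftrightarrow$ (2): the easy direction (2) $\Rightarrow$ (1) uses $\ksib=\rho_h^\ast(\kpub)$ together with maximality of the hyperbolization $\rho_h$, which yields $\T_{\ksib}(\id_{\pi_1(\Sigma)})=\|\ksib\|\,|\chi(\Sigma)|$, combined with $\|\lambda\ksib\|=\lambda\|\ksib\|$ for $\lambda\geq 0$, giving $\T_\k(\rho)=\lambda\|\ksib\|\,|\chi(\Sigma)|=\|\rho^\ast(\k)\|\,|\chi(\Sigma)|$. The reverse direction (1) $\Rightarrow$ (2) is the crux and rests on the characterization of $\ksib$ (up to a non-negative scalar) as the unique class in $\hb^2(\pi_1(\Sigma),\RR)$ saturating the norm pairing with the relative fundamental class $[\Sigma,\partial\Sigma]$; this is established in \cite[Corollary~4.15]{Burger_Iozzi_Wienhard_tol}, generalising \cite[Corollary~3.4]{Wienhard_thesis}. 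Non-negativity of the resulting $\lambda$ is forced by the inequality $\T_\k(\rho)=\|\rho^\ast(\k)\|\,|\chi(\Sigma)|\geq 0$. I expect this to be the main obstacle: for closed $\Sigma$ the argument reduces quickly because $\h^2(\pi_1(\Sigma),\RR)\cong\RR$ pins the class down up to an exact part controlled by a quasimorphism, whereas for $\partial\Sigma\neq\emptyset$ one must genuinely exploit the relative pairing and the isometric isomorphism $j_{\partial\Sigma}$.

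For (2) $\Rightarrow$ (3): the commutative square
\[\xymatrix{\widehat\Gamma\ar[r]^{\widetilde\rho}\ar[d]_{p_\Sigma}&G_{n\k}\ar[d]^{p_{n\k}}\\ \pi_1(\Sigma)\ar[r]^{\rho}&G}\]
together with \eqref{eq:dfk} and \eqref{eq:hom_qm} yields, under the hypothesis $\rho^\ast(\k)=\lambda\ksib$, the identity $[d(f_{n\k}\circ\widetilde\rho)]=\lambda\,[d(\tau\circ\widetilde\rho_h)]$ in $\hb^2(\widehat\Gamma,\RR)$. Hence the homogeneous quasimorphism $q:=f_{n\k}\circ\widetilde\rho-\lambda(\tau\circ\widetilde\rho_h)$ has vanishing class in $\hb^2$, and the standard exact sequence relating homogeneous quasimorphisms to $\hb^2$ forces $q$ to be a homomorphism $\psi$. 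The step (3) $\Rightarrow$ (4) is immediate since any homomorphism $\widehat\Gamma\to\RR$ is trivial on $\Lambda=[\widehat\Gamma,\widehat\Gamma]$.

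Finally, for (4) $\Rightarrow$ (2), observe that $q$ is still homogeneous and now vanishes on $\Lambda$, so it descends to a homogeneous quasimorphism on the abelian group $\widehat\Gamma/\Lambda$; a standard argument (take $n$-th powers, divide by $n$, and let $n\to\infty$) shows that any homogeneous quasimorphism on an abelian group is a homomorphism, so $q=\psi$ is a homomorphism, which is exactly (3). Taking bounded coboundaries and reading the commutative square in reverse yields $p_\Sigma^\ast(\rho^\ast(\k)-\lambda\ksib)=0$ in $\hb^2(\widehat\Gamma,\RR)$. Since $\ker p_\Sigma$ is either trivial (when $\partial\Sigma\neq\emptyset$) or the central, amenable subgroup $\ZZ$ (when $\partial\Sigma=\emptyset$), the inflation $p_\Sigma^\ast$ is an isometric isomorphism on $\hb^2$ by the standard fact that inflation along a surjection with amenable kernel is an isomorphism on bounded cohomology, so $\rho^\ast(\k)=\lambda\ksib$, and (2) is established.
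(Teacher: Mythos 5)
Your proposal is correct, and its skeleton coincides with the paper's: the equivalence (1)$\Leftrightarrow$(2) is in both cases delegated to the same external result (\cite[Corollary~4.15]{Burger_Iozzi_Wienhard_tol}); (2)$\Leftrightarrow$(3) is proved exactly as in the paper, via the commutative square \eqref{eq:commutativediagramm} together with \eqref{eq:dfk}, \eqref{eq:hom_qm} and the fact that $p_\Sigma^\ast$ is an isomorphism on $\hb^2$ (trivial or central amenable kernel); and (3)$\Rightarrow$(4) is trivial in both. The one genuine divergence is the return step from (4): the paper takes coboundaries in (4), observes that the classes $[d(f_{n\k}\circ\widetilde\rho)]$ and $\lambda[d(\tau\circ\widetilde\rho_h)]$ agree upon restriction to $\Lambda$, and invokes injectivity of the restriction $\hb^2(\widehat\Gamma,\RR)\to\hb^2(\Lambda,\RR)$ coming from amenability of $\widehat\Gamma/\Lambda$, whereas you argue at the level of quasimorphisms via the abelianization --- a more elementary route, since it only uses that homogeneous quasimorphisms of abelian groups are homomorphisms, not the restriction-injectivity machinery. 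One phrase in your argument needs repair, though: the quasimorphism $q:=f_{n\k}\circ\widetilde\rho-\lambda(\tau\circ\widetilde\rho_h)$ vanishing on $\Lambda$ does \emph{not} literally ``descend'' to $\widehat\Gamma/\Lambda$, because $q$ need not be constant on $\Lambda$-cosets; one only gets $|q(g\lambda)-q(g)|\leq\|dq\|_\infty$ for $\lambda\in\Lambda$. The fix is immediate and stays within your approach: for $g,h\in\widehat\Gamma$ write $(gh)^n=g^nh^nc_n$ with $c_n\in\Lambda$, so that $|n\,q(gh)-n\,q(g)-n\,q(h)|\leq 2\|dq\|_\infty$ for all $n$, and dividing by $n$ shows directly that $q$ is a homomorphism, which is (3); alternatively, homogenize $q$ along any set-theoretic section of $\widehat\Gamma\to\widehat\Gamma/\Lambda$. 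A second, very minor point you assume implicitly and the paper makes explicit: for an arbitrary lift $\widetilde\rho$, the square commutes only after modifying $\widetilde\rho$ by a homomorphism into the central $\ZZ$, which changes $f_{n\k}\circ\widetilde\rho$ by a homomorphism (homogeneous quasimorphisms are additive on commuting elements) and is therefore absorbed into $\psi$ in (3) and invisible in (4).
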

\begin{rem} The constant $\lambda\geq0$ that appears in Proposition~\ref{prop:equiv}
is nothing but $\lambda=\frac{\T_\k(\rho)}{|\chi(\Sigma)|}$, as it can be easily seen
from the definition of the Toledo invariant in \S~\ref{sec:tolinv}.
\end{rem}
\begin{proof}[Proof of Proposition~\ref{prop:equiv}] 
The equivalence of (1) and (2) is \cite[Corollary~4.15]{Burger_Iozzi_Wienhard_htt}.
In fact, the implication (2)$\Rightarrow$(1) is clear and we recall here the reverse implication
for the convenience of the reader.

Let $\rho_h$ be a complete finite area hyperbolization on $\Sigma^\circ$ so that 
$\Delta:=\rho_h(\pi_1(\Sigma))$ is a lattice in $\PU(1,1)$.  The space $\hb^2(\Delta,\RR)$ 
is isometrically isomorphic to the space of $\Delta$-invariant alternating $\linfty$-cocycles on
$(\partial\DD)^3$ and by \cite[Proposition~4.13]{Burger_Iozzi_Wienhard_htt}
we have a linear form $\tb:\hb^2(\Delta,\RR)\to\RR$ given by 
\bqn
\int_{\Delta\backslash\PU(1,1)}\alpha(gx,gy,gz)\,d\mu(g)=\frac{\tb(a)}{2}\,\orn(x,y,z)\,,
\eqn
where $\alpha:(\partial\DD)^3\to\RR$ corresponds to $a\in\hb^2(\Delta,\RR)$,
$\orn$ is the orientation cocycle and $\mu$ is the invariant
probability measure on $\Delta\backslash\PU(1,1)$. 
We obtained then in \cite[Theorem~4.9]{Burger_Iozzi_Wienhard_htt} an analytic formula for $\T_\k(\rho)$ 
that reads
\bqn
\T_\k(\rho)=\tb((\rho_h^\ast)^{-1}(\rho^\ast(\k)))|\chi(\Sigma)|\,.
\eqn
If now $\rho$ is weakly maximal, we have from the equality
\bqn
\T_\k(\rho)=2\|\rho^\ast(\k)\||\chi(\Sigma)|
\eqn
that 
\bqn
\tb((\rho_h^\ast)^{-1}(\rho^\ast(\k)))=2\|\rho^\ast(\k)\|
\eqn
and hence, if $\alpha$ corresponds to $\rho^\ast(\k)$, that 
\bqn
\int_{\Delta\backslash\PU(1,1)}\alpha(gx,gy,gz)\,d\mu(g)=\|\alpha\|_\infty\,\orn(x,y,z)\,.
\eqn
This implies that $\alpha$ is a positive multiple of the orientation cocycle and hence 
$\rho^\ast(\k)=\l\ksib$ for some $\l\geq0$.

Turning to the equivalence of (2) and (3), observe that, modulo modifying
the lift $\widetilde\rho$ by a homomorphism with values in $\ZZ\hookrightarrow G_{n\k}$, 
the diagram
\bq\label{eq:commutativediagramm}
\xymatrix{
\widehat\Gamma\ar[r]^{\widetilde\rho}\ar[d]_{p_\Sigma}
&G_{n\k}\ar[d]^{p_{n\k}}\\
\pi_1(\Sigma)\ar[r]^\rho
&G
}
\eq
commutes.  This, together with 
the relation \eqref{eq:hom_qm} and the fact that $p_\Sigma^\ast$ induces 
an isomorphism in real bounded cohomology, 
proves the equivalence of (2) and (3).

Finally, by taking coboundaries on both sides of the equality in (4), 
one obtains that the bounded classes $[d(f_{n\k}\circ\widetilde\rho)]$
and $\lambda[d(\tau\circ\widetilde\rho_h)]$ coincide on $\Lambda$.
Since $\widehat\Gamma/\Lambda$ is amenable, 
we deduce the equality in $\widehat\Gamma$, which in turns implies (3).
\end{proof}

\subsection{The Hermitian Case}\label{subsec:herm}
Assume that $G$ is of Hermitian type and almost simple.
We apply the above discussion to the bounded K\"ahler class $\kgb$, which is rational,
\cite[\S~5]{Burger_Iozzi_supq}.
If $\k=n\kgb$ is any integer multiple represented by an integral cocycle,
the topological central extension
\bqn
\xymatrix@1{
 0\ar[r]
&\ZZ\ar[r]^i
&G_{\k}\ar[r]^{p_\k}
&G\ar[r]
&e}
\eqn
is not trivial.  
Since $\pi_1(G)$ is isomorphic to $\ZZ$ modulo torsion,
there is a unique connected central $\ZZ$-extension $\widehat{G}$
and, as a result, the connected component of the identity $(G_\k)^\circ$
is isomorphic to $\widehat{G}$.  We denote by $f_{\widehat{G}}:\widehat{G}\to\RR$
the continuous homogeneous quasimorphism corresponding to $f_\k$
under this isomorphism and hence obtain that 
\bq\label{eq:qmcheck}
[df_{\widehat{G}}]=p^\ast(\kgb)\,,
\eq
where $p=p|_{(G_\k)^\circ}:\widehat{G}\to G$ is the projection.
For example, if $G=\PU(1,1)$, the bounded K\"ahler class $\kpub$ is already integral
as it is the image of the bounded Euler class $\eb$ under the change of coefficients
$\hcb^2(\PU(1,1),\ZZ)\to\hcb^2(\PU(1,1),\RR)$.  Then $\widehat{G}$ is the universal 
covering $\widetilde{\PU(1,1)}$ and $f_{\widehat{G}}$ is the Poincar\'e translation
quasimorphism.

\section{A General Framework for Injectivity and Discreteness}\label{sec:discrete}
In this section we give two results on weakly maximal representations: one concerns
injectivity with general locally compact targets, and the other discreteness of the image
in the case of Hermitian targets.  In both cases we use the characterization of 
weakly maximal representations in \S~\ref{sec:milnor_wood}.  However,
while the characterization holds also if the Toledo invariant vanishes,
both results in this section are obtained under the assumption that $\T_\k(\rho)\neq0$. Discreteness requires in addition that $\T_\k(\rho)$ is rational. 
The rationality question will be further addressed in Section~\ref{sec:rationality}.

\medskip
\begin{prop}\label{prop:injective}
Let $\rho:\pi_1(\Sigma)\to G$ be $\k$-weakly maximal and assume that $\T_\k(\rho)\neq0$.
Then $\rho$ is injective.
\end{prop}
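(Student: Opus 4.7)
The plan is to combine the characterization of weak maximality from Proposition~\ref{prop:equiv}(4) with a commutator argument that exploits the centrality of $i(\ZZ)$ in the extension $G_{n\k}$.

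Suppose, for contradiction, that some $\gamma\in\pi_1(\Sigma)\setminus\{e\}$ lies in $\ker\rho$, and choose an arbitrary lift $\widehat\gamma\in\widehat\Gamma$. Commutativity of \eqref{eq:commutativediagramm} forces $\widetilde\rho(\widehat\gamma)\in\ker p_{n\k}=i(\ZZ)$, and by construction $i(\ZZ)$ is central in $G_{n\k}$. Thus for every $N\in\ZZ$ and every $\widehat\delta\in\widehat\Gamma$ the commutator $[\widehat\gamma^N,\widehat\delta]$ lies in $\Lambda$ and
\begin{equation*}
\widetilde\rho([\widehat\gamma^N,\widehat\delta])=[\widetilde\rho(\widehat\gamma)^N,\widetilde\rho(\widehat\delta)]=e,
\end{equation*}
so $f_{n\k}(\widetilde\rho([\widehat\gamma^N,\widehat\delta]))=0$. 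Setting $\lambda:=\T_\k(\rho)/|\chi(\Sigma)|$, which is non-zero by hypothesis, Proposition~\ref{prop:equiv}(4) then yields
\begin{equation*}
\tau\bigl(\widetilde\rho_h([\widehat\gamma^N,\widehat\delta])\bigr)=0\qquad\text{for all }N\in\ZZ\text{ and }\widehat\delta\in\widehat\Gamma.
\end{equation*}
Since commutators in $\widetilde{\PU(1,1)}$ are insensitive to central factors, setting $a:=\widetilde\rho_h(\widehat\gamma)$ this rephrases as $\tau([a^N,b])=0$ for every $N\in\ZZ$ and every $b$ in the image of $\widetilde\rho_h$.

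To derive a contradiction I would argue as follows. Because $\rho_h$ is faithful, $a$ projects to a non-trivial hyperbolic or parabolic isometry of $\HH^2$; since $\chi(\Sigma)<0$ implies that $\pi_1(\Sigma)$ is non-elementary with trivial centre, one can pick $\delta\in\pi_1(\Sigma)$ whose image $\rho_h(\delta)$ neither commutes with $\rho_h(\gamma)$ nor preserves the fixed-point set of $\rho_h(\gamma)$ on $\partial\HH^2$. A standard ping-pong/dynamical argument in $\PU(1,1)$ then produces an $N$ so large that $[\rho_h(\gamma)^N,\rho_h(\delta)]$ is a hyperbolic element whose lift to $\widetilde{\PU(1,1)}$ has strictly positive Poincar\'e translation number, contradicting the vanishing displayed above.

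The main obstacle is precisely this last dynamical step: establishing that $\tau([a^N,b])\neq 0$ for some $N$ and some $b$ in the (Fuchsian) image of $\widetilde\rho_h$. One can check it by an explicit computation with the attracting and repelling fixed points on $\partial\HH^2$, or more abstractly by invoking Bavard duality together with the fact that the translation quasimorphism is non-trivial on the commutator subgroup of any non-elementary subgroup of $\widetilde{\PU(1,1)}$, so that it must take a non-zero value on some commutator of the prescribed form $[a^N,b]$.
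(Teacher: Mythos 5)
Your reduction is correct as far as it goes, and it is a genuinely different route from the paper's: you extract the vanishing $\tau([a^N,b])=0$ only on commutators of the special shape $[a^N,b]$ with $a=\widetilde\rho_h(\widehat\gamma)$ fixed, whereas the paper observes that $\widetilde\rho$ maps all of $p_\Sigma^{-1}(\ker\rho)$ into the central subgroup $i(\ZZ)$, deduces (since $\lambda\neq 0$) that $\tau\circ\widetilde\rho_h$ is a \emph{homomorphism} on that whole subgroup, hence that $(\rho_h|_{\ker\rho})^\ast(\kpub)=0$, and concludes via Lemma~\ref{lem:elementary} together with the normality of $\ker\rho$ (a nontrivial normal subgroup of $\pi_1(\Sigma)$ cannot have elementary image under a hyperbolization). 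The problem is that your final dynamical step, which you yourself flag as the main obstacle, has a genuine gap, in fact two.

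First, hyperbolicity of $[\rho_h(\gamma)^N,\rho_h(\delta)]$ does \emph{not} imply that its canonical lift has nonzero translation number: for a ping-pong pair in ``pants position'' (the two fixed-point pairs unlinked on $\partial\DD$) the commutator is hyperbolic and yet $\tau([a^N,b])=0$. Nonvanishing requires a \emph{linked} configuration (crossing axes, the one-holed-torus picture, where the lifted commutator has $\tau=\pm1$), and your hypothesis that $\rho_h(\delta)$ ``neither commutes with nor preserves the fixed-point set'' of $\rho_h(\gamma)$ does not ensure linking. Second, and fatally for the argument as stated: when $\partial\Sigma\neq\emptyset$ the element $\gamma\in\ker\rho$ may be peripheral, so that $u=\rho_h(\gamma)$ is \emph{parabolic}. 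A single fixed point can never be linked with anything; every subgroup $\langle u^N,\rho_h(\delta)\rangle$ is automatically discrete (it sits inside the discrete group $\rho_h(\pi_1(\Sigma))$) and is then of pants type, and one checks that $\tau([a^N,b])=0$ for \emph{all} $N$ and $b$. So in this case no contradiction can be produced from your vanishing statement, even though the proposition is true. The Bavard-duality fallback does not repair this: nontriviality of $\tau$ on the commutator subgroup yields a nonzero value on some product of commutators, not on one of the constrained form $[a^N,b]$ with $a$ fixed --- and the parabolic case is an explicit counterexample to that inference. The missing idea is precisely the normality of $\ker\rho$, which your special commutators exploit only partially: if $\ker\rho\neq\{e\}$, it is a nontrivial normal, hence nonelementary, subgroup, so it contains an element with \emph{hyperbolic} $\rho_h$-image (e.g.\ a suitable commutator $[\gamma,\delta]\in\ker\rho$, a product of two parabolics with distinct fixed points); replacing $\gamma$ by such an element and choosing $\delta$ with genuinely crossing axis, your ping-pong argument can then be carried out --- which is in substance what the paper's passage through Lemma~\ref{lem:elementary} and normality achieves in one stroke.
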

\begin{proof}
Applying Proposition~\ref{prop:equiv}, we have that 
\bqn
f_\k\circ\widetilde\rho=\lambda(\tau\circ\widetilde\rho_h)+\psi
\eqn
with $\lambda=\frac{\T_\k(\rho)}{|\chi(\Sigma)|}\neq0$.  Since $p_\Sigma^{-1}(\ker\rho)=\widetilde\rho^{-1}(i(\ZZ))$,
we deduce from $\lambda\neq0$ that $\tau\circ\widetilde\rho_h$ is a homomorphism on $p_\Sigma^{-1}(\ker\rho)$.
A formal argument using the central extension \eqref{eq:central extension}
restricted to $\ker\rho<\pi_1(\Sigma)$, the commutativity of the diagram
\bqn
\xymatrix{
\widehat\Gamma\ar[r]^{\widetilde\rho_h}\ar[d]_{p_\Sigma}
&\widetilde{\PU(1,1)}\ar[d]^p\\
\pi_1(\Sigma)\ar[r]^{\rho_h}
&\PU(1,1)\,,
}
\eqn
and \eqref{eq:d tau} implies that 
\bqn
(\rho_h|_{\ker\rho})^\ast(\kpub)=0\,.
\eqn
Since this implies that  $\rho_h({\ker\rho})$ is elementary (see next lemma),
we conclude that $\ker\rho$ is trivial.
\end{proof}

We provide a proof of the following easy lemma for ease of reference.

\begin{lemma}\label{lem:elementary}  Let $\pi:\Delta\to\PU(1,1)$ be a homomorphism 
such that $\pi^\ast(\kpub)=0$. Then $\pi(\Delta)$ is elementary, that is $\pi(\Delta)$
has a finite orbit in $\overline\DD$.
\end{lemma}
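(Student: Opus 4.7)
The plan is to prove the contrapositive: assuming $\pi(\Delta)$ is not elementary, I will derive $\pi^\ast(\kpub)\neq 0$. The strategy is to produce a two-generated free subgroup of $\Delta$ mapping isomorphically onto a Schottky subgroup of $\PU(1,1)$, and then to invoke the Milnor--Wood equality for a pair-of-pants hyperbolization.

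First, I would extract a Schottky subgroup of $\PU(1,1)$. Since $\pi(\Delta)$ is non-elementary, a standard ping-pong argument in Fuchsian group theory produces two hyperbolic elements $a,b\in\pi(\Delta)$ whose fixed-point pairs on $\partial\DD$ are disjoint; after replacing $a,b$ by sufficiently high powers, the subgroup $F:=\langle a,b\rangle\leq\PU(1,1)$ is discrete and freely generated. The quotient $F\bsl\DD$ contains a compact convex core homeomorphic to a pair of pants $P$, so $F$ is the holonomy of a hyperbolization of $P$.

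Second, I would promote this to a free subgroup of $\Delta$ via Hopfianness. Let $g,h\in\Delta$ be preimages of $a,b$ under $\pi$ and set $\Delta_0:=\langle g,h\rangle\leq\Delta$. Being two-generated, $\Delta_0$ is a quotient of $F_2$; composing this quotient with $\pi|_{\Delta_0}:\Delta_0\twoheadrightarrow F\cong F_2$ gives a surjective endomorphism of $F_2$, which must be an isomorphism by Hopfianness of free groups. Consequently $\pi|_{\Delta_0}:\Delta_0\to F$ is itself an isomorphism.

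Third, restricting the hypothesis $\pi^\ast(\kpub)=0$ to $\Delta_0$ yields $(\pi|_{\Delta_0})^\ast(\kpub|_F)=0$; since $\pi|_{\Delta_0}$ is an isomorphism, this forces $\kpub|_F=0$ in $\hb^2(F,\RR)$. On the other hand, the inclusion $\iota:F\hookrightarrow\PU(1,1)$ realizes the Fuchsian hyperbolization of $P$, so by the Milnor--Wood computation recalled in Section~2 applied to the compact surface $P$ with boundary, the Toledo invariant satisfies $\T(\iota)=2\|\kpub\|\cdot|\chi(P)|\neq 0$. Since $\T(\iota)$ is computed from $\iota^\ast(\kpub)=\kpub|_F$, this forces $\kpub|_F\neq 0$, a contradiction.

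The main obstacle is the ping-pong argument in the first step, which requires choosing sufficiently high powers of two hyperbolic elements with disjoint fixed-point pairs so that the resulting attracting and repelling neighborhoods on $\partial\DD$ set up a standard ping-pong dynamics; this is classical. Granting this, the final contradiction is an immediate consequence of the Milnor--Wood apparatus of Section~2 applied to a pair of pants with $\chi=-1$.
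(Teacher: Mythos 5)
Your route is genuinely different from the paper's. The paper's proof is purely cohomological and very short: from $\pi^\ast(\kpub)=0$ and the exact sequence $\hom(\Delta,\RR/\ZZ)\to\hb^2(\Delta,\ZZ)\to\hb^2(\Delta,\RR)$ it writes $\pi^\ast(\eb)=\delta(\chi)$ for a homomorphism $\chi:\Delta\to\RR/\ZZ$, deduces $(\pi|_{[\Delta,\Delta]})^\ast(\eb)=0$, and applies Ghys' theorem to get a fixed point for $[\pi(\Delta),\pi(\Delta)]$ on $\partial\DD$, whence elementarity. Your argument is dynamical instead (ping-pong, Hopfianness of $F_2$, maximality of a Fuchsian holonomy), and its skeleton is sound: steps one through three are correct, including the Hopfian trick showing $\pi|_{\Delta_0}$ is an isomorphism and the deduction $\kpub|_F=0$. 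Note, though, that since $\pi(\Delta)$ need not be discrete, the existence of two hyperbolic elements with disjoint fixed-point pairs in a non-elementary subgroup deserves a word (it is classical: a subgroup of $\PU(1,1)$ consisting only of elliptic or only of parabolic elements is elementary); and the convex core of a rank-two Schottky group can be a one-holed torus rather than a pair of pants, depending on whether the axes cross --- harmless, since $\chi=-1$ either way, but you should mark $F$ by its own convex core rather than insisting on $P$.

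The genuine gap is in your final step. You claim $\T(\iota)=2\|\kpub\|\cdot|\chi(P)|\neq 0$ ``by the Milnor--Wood computation recalled in Section~2,'' but Section~2 contains no computation of any representation's Toledo invariant: it only defines $\T$ and proves the upper bound $|\T_\k(\rho)|\leq\|\rho^\ast(\k)\|\,|\chi(\Sigma)|$, and an inequality of Milnor--Wood type can never produce the lower bound $\T(\iota)\neq 0$ that your contradiction requires. Worse, your $F$ has hyperbolic generators and infinite-area quotient (funnels), so $\iota$ is the holonomy of a hyperbolic structure with \emph{geodesic boundary}, not a hyperbolization in the paper's sense (complete, finite area on the interior, hence with parabolic boundary holonomy); in particular you cannot identify $\iota^\ast(\kpub)$ with the bounded fundamental class $\ksib$ without argument --- for a free group $\hb^2$ is huge, so nothing is automatic. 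The needed fact --- that the holonomy of a geodesic-boundary hyperbolic structure on a compact surface with $\chi=-1$ has $\T=|\chi|$ --- is true and classical (Goldman; it is also available in the paper's reference \cite{Burger_Iozzi_Wienhard_tol}), and can be established either by the translation-number formula for $\T$ on surfaces with boundary or by doubling along the boundary and using additivity of $\T$ together with the closed-surface case. With that input supplied and correctly cited, your proof closes; as written, the crucial non-vanishing is asserted rather than proved.
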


\begin{proof} Since $\pi^\ast(\kpub)=0$, we conclude from the exactness of the sequence
\bqn
\xymatrix{
\hom(\Delta,\RR/\ZZ)\ar[r]^-\delta
&\hb^2(\Delta,\ZZ)\ar[r]
&\hb^2(\Delta,\RR)}
\eqn
that $\pi^\ast(\eb)=\delta(\chi)$ for some homomorphism $\chi:\Delta\to\RR/\ZZ$.
In other words, if we consider $\chi$ as a homomorphism into the group of rotations
in $\PU(1,1)$, we have that $\chi^\ast(\eb)=\pi^\ast(\eb)$. 
In particular $(\pi|_{[\Delta,\Delta]})^\ast(\eb)=0$.
Hence, by Ghys' theorem \cite[Theorem~6.6]{Ghys_01},  
$[\Delta,\Delta]$ has a fixed point in $\partial\DD$.
There are then two cases, 
depending on whether or not $[\Delta,\Delta]$ has infinitely many fixed points.
In the first case $[\pi(\Delta),\pi(\Delta)]$ is trivial, 
that is $\pi(\Delta)$ is abelian.  
In the second case $\pi(\Delta)$ has a finite orbit in $\partial\DD$.
In either cases $\pi(\Delta)$ is elementary.
\end{proof}

\medskip

The following proposition gives a criterion to insure 
the discreteness of the image of a $\k$-weakly maximal representation.
In Theorem~\ref{thm:structure} we are going to show that 
such criterion is always satisfied if $\k$ is the bounded K\"ahler class.
\begin{prop}\label{prop:discrete}  
Let $G$ be a Lie group of Hermitian type and 
let $\k\in\hcb^2(G,\RR)$ be a rational class.
Assume that $\rho:\pi_1(\Sigma)\to G$ 
is $\k$-weakly maximal and that 
$\T_\k(\rho)\in\QQ^\times$.  
Then $\rho$ has discrete image.
\end{prop}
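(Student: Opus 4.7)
The plan is to first establish that $\rho([\pi_1(\Sigma),\pi_1(\Sigma)])$ is discrete in $G$, and then to deduce discreteness of $\rho(\pi_1(\Sigma))$ itself by a commutator argument exploiting the injectivity of $\rho$ (Proposition~\ref{prop:injective}) together with the absence of non-trivial normal cyclic subgroups in $\pi_1(\Sigma)$.

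First, I would apply Proposition~\ref{prop:equiv}~(4) to write $(f_{n\k}\circ\widetilde\rho)|_\Lambda = \lambda(\tau\circ\widetilde\rho_h)|_\Lambda$ with $\lambda:=\T_\k(\rho)/|\chi(\Sigma)|\in\QQ^\times$. Since $\tau\circ\widetilde\rho_h$ takes values in $\ZZ$, the restriction of $f_{n\k}\circ\widetilde\rho$ to $\Lambda$ takes values in the closed discrete subset $\lambda\ZZ\subset\RR$. Continuity of $f_{n\k}$ on $G_{n\k}$ forces $\overline{\widetilde\rho(\Lambda)}\subseteq f_{n\k}^{-1}(\lambda\ZZ)$; restricting to the identity component $A$ of this closure, $f_{n\k}|_A$ is a continuous function with discrete range passing through $f_{n\k}(e)=0$, hence $f_{n\k}|_A\equiv 0$.

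Next, setting $L_1:=p_{n\k}(A)\subseteq G$, a connected closed Lie subgroup which by dimension count coincides with the identity component of $L':=\overline{\rho([\pi_1(\Sigma),\pi_1(\Sigma)])}$, I would translate $f_{n\k}|_A\equiv 0$ into $\k|_{L_1}=0$ in $\hcb^2(L_1,\RR)$: since $p_{n\k}|_A\colon A\to L_1$ is surjective with discrete amenable kernel, the induced map on continuous bounded cohomology is injective, and $[df_{n\k}]=p_{n\k}^\ast\k$ gives the conclusion. For $\Gamma_1:=\rho^{-1}(L_1)$, the map $\rho|_{\Gamma_1}$ factors through $L_1$, so $(\rho|_{\Gamma_1})^\ast\k=0$; combining with $\rho^\ast\k=\lambda\ksib$ from Proposition~\ref{prop:equiv}~(2) and $\lambda\neq 0$ yields $\ksib|_{\Gamma_1}=(\rho_h|_{\Gamma_1})^\ast\kpub=0$. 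By Lemma~\ref{lem:elementary}, $\rho_h(\Gamma_1)$ is then elementary in $\PU(1,1)$, and the faithfulness of $\rho_h$ together with the torsion-freeness of its image implies $\Gamma_1$ is cyclic.

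Then I would show $\Gamma_1=\{e\}$: as the identity component of $L'$, which is normal in $\overline{\rho(\pi_1(\Sigma))}$, the subgroup $L_1$ is itself normal, so $\Gamma_1$ is normal in $\pi_1(\Sigma)$; surface groups of negative Euler characteristic admit no non-trivial normal cyclic subgroup. The density of $\rho([\pi_1(\Sigma),\pi_1(\Sigma)])\cap L_1$ in $L_1$ (as $L_1$ is open in $L'$), combined with $\rho(\Gamma_1)=\{e\}$, then forces $L_1=\{e\}$, so $L'$ is discrete and therefore $\rho([\pi_1(\Sigma),\pi_1(\Sigma)])$ is discrete in $G$. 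Finally, if $\rho(\pi_1(\Sigma))$ were not discrete, one would find $\gamma_k\neq e$ with $\rho(\gamma_k)\to e$ and, using that centralizers in $\pi_1(\Sigma)$ are cyclic, select $\xi\in\pi_1(\Sigma)$ with $[\gamma_k,\xi]\neq e$ for infinitely many $k$; then $[\gamma_k,\xi]\in[\pi_1(\Sigma),\pi_1(\Sigma)]\setminus\{e\}$ while $\rho([\gamma_k,\xi])\to e$, contradicting the injectivity of $\rho$ and the discreteness just established. The hard part will be the cohomological translation from $f_{n\k}|_A\equiv 0$ to $\k|_{L_1}=0$, which rests on the standard injectivity of pullback on continuous bounded cohomology under surjections with discrete amenable kernel.
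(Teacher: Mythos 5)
Your overall architecture matches the paper's (first prove discreteness of $\rho([\pi_1(\Sigma),\pi_1(\Sigma)])$ via the quasimorphism identity from Proposition~\ref{prop:equiv}, then upgrade to discreteness of $\rho(\pi_1(\Sigma))$ by a commutator argument), and several ingredients are sound: your explicit appeal to injectivity of the pullback in bounded cohomology under a surjection with amenable kernel is a correct way to pass from $f_{n\k}|_A\equiv 0$ to vanishing of $\k$, and your closing pigeonhole argument (two elements $\xi_1,\xi_2$ with distinct axes, so $[\gamma_k,\xi_i]\neq e$ for some $i$ and infinitely many $k$) is a valid alternative to the paper's argument that a discrete normal subgroup is centralized by $\overline{\rho(\pi_1(\Sigma))}^\circ$. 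However, there is a genuine gap at the pivotal step where you set $L_1:=p_{n\k}(A)$, with $A=\overline{\widetilde\rho(\Lambda)}^{\,\circ}$, and claim ``by dimension count'' that $L_1$ is the identity component of $L'=\overline{\rho([\pi_1(\Sigma),\pi_1(\Sigma)])}$. Since $p_{n\k}$ has infinite discrete central kernel, one has $p_{n\k}^{-1}(L')=\overline{\widetilde\rho(\Lambda)\ker p_{n\k}}$, and taking closures does \emph{not} commute with saturating by $\ker p_{n\k}$: the identity component of the saturated closure can be strictly larger than $A$. Toy model: $G_{n\k}=\RR^2$, $\ker p_{n\k}=\{0\}\times\ZZ$, $G=\RR\times(\RR/\ZZ)$, and $\widetilde\rho(\Lambda)$ the cyclic group generated by $(1,\alpha)$ with $\alpha\notin\QQ$. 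Upstairs the closure is discrete, so $A=\{e\}$ and your $L_1$ is trivial, yet downstairs $\overline{\rho(\Lambda)}=\ZZ\times(\RR/\ZZ)$ has a circle as identity component — the image is indiscrete even though your argument would declare victory. (Relatedly, $p_{n\k}(A)$ need not be closed in $G$, so treating $\hcb^2(L_1,\RR)$ and ``covering with amenable kernel'' for $L_1$ is problematic for the same reason.)

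The tell is that your proof uses the hypothesis $\T_\k(\rho)\in\QQ^\times$ only through $\lambda\neq 0$: the set $\lambda\ZZ$ is discrete for \emph{every} real $\lambda$, so rationality never does any work — but it must, and the toy model above is exactly the irrational-$\lambda$ failure mode. The paper handles this by working from the start with the saturated preimage $p_{n\k}^{-1}\bigl(\overline{\rho([\pi_1(\Sigma),\pi_1(\Sigma)])}\bigr)=\overline{\widetilde\rho([\widehat\Gamma,\widehat\Gamma])\ker p_{n\k}}$ and controlling $f_{n\k}$ on all of it: homogeneous quasimorphisms are additive on central elements and $f_{n\k}(i(m))=\frac{1}{n}m$ by \eqref{eq:values of fk}, so $f_{n\k}\bigl(\widetilde\rho([\widehat\Gamma,\widehat\Gamma])\ker p_{n\k}\bigr)\subset\frac{1}{n}\ZZ+\lambda\ZZ$, and this set is discrete \emph{precisely because} $\lambda\in\QQ$. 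Then $f_{n\k}$ vanishes on the identity component of the saturated preimage, which — $p_{n\k}$ being open — genuinely surjects onto $\overline{\rho([\pi_1(\Sigma),\pi_1(\Sigma)])}^{\,\circ}$. If you replace your $A$ by that component, the remainder of your proof (descent of $\k$, Lemma~\ref{lem:elementary} applied to the normal subgroup $\Gamma_1$, triviality of the identity component by density, and your final commutator step) goes through.
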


\begin{rem}\label{rem:Tol_inv_not_zero}  In the case in which $\partial\Sigma=\emptyset$,
we have automatically that $\T_\k(\rho)\in\QQ$, so that in this case the condition just reads
$\T_\k(\rho)\neq0$.
\end{rem}

\begin{proof} Let $\Gamma:=\pi_1(\Sigma)$.
It will be enough to show that $\rho([\Gamma,\Gamma])$ is discrete.
 In fact $\rho([\Gamma,\Gamma])$ is a normal subgroup of $\rho(\Gamma)$ and,
 if discrete, also of $\overline{\rho(\Gamma)}$;
 thus $\rho([\Gamma,\Gamma])$ centralizes the connected component
of the identity $\overline{\rho(\Gamma)}^\circ$ of $\overline{\rho(\Gamma)}$.
 If $\overline{\rho(\Gamma)}^\circ$ were not trivial, 
 we could find $\rho(\gamma)\in\rho(\Gamma)\cap\overline{\rho(\Gamma)}^\circ$
 with $\rho(\gamma)\neq e$: indeed $\overline{\rho(\Gamma)}^\circ$ is an open subgroup
 of $\overline{\rho(\Gamma)}$ and $\rho(\Gamma)$ is dense in $\overline{\rho(\Gamma)}$.
 But since, by Proposition~\ref{prop:injective}, $\rho$ is injective, 
 this would imply that $\gamma$ centralizes $[\Gamma,\Gamma]$. 
 This is a contradiction, which shows that $\rho(\Gamma)$ is discrete.

To show the discreteness of $\rho([\Gamma,\Gamma])$, 
we retain the notation of \S~\ref{subsec:herm}
and we define $L:=\overline{\rho([\Gamma,\Gamma])}$ and $L':=p^{-1}(L)$.
Observe that \eqref{eq:central extension G} and \eqref{eq:commutativediagramm}
imply that $p^{-1}(\rho[\Gamma,\Gamma])=\widetilde\rho([\widehat\Gamma,\widehat\Gamma])\ker\,p$.
Since $p$ is a covering, 
then $p^{-1}(\overline{\rho([\Gamma,\Gamma])})=\overline{p^{-1}(\rho([\Gamma,\Gamma]))}$,
so that
 \bq\label{eq:L'}
 L'=\overline{\widetilde\rho([\widehat\Gamma,\widehat\Gamma])\ker\,p}\,.
 \eq
 We apply now the implication (1)$\Rightarrow$(3) in Proposition~\ref{prop:equiv}
 to obtain that
 \bqn\label{eq:Tol_lambda}
 f_{n\k}\circ\widetilde\rho=\lambda(\tau\circ\widetilde\rho_h)+\psi\,,
 \eqn
 where $\lambda=\frac{\T_\k(\rho)}{|\chi(\Sigma)|}$ and $\psi\in\hom(\widehat\Gamma,\RR)$.
 Since $\tau\circ\widetilde\rho$ is $\ZZ$-valued,  we deduce that 
 \bqn
 f_{n\k}(\widetilde\rho([\widehat\Gamma,\widehat\Gamma]))
=\lambda\tau(\widetilde\rho_h([\widehat\Gamma,\widehat\Gamma]))
\subset\lambda\tau(\widetilde\rho_h(\widehat\Gamma))\subset\lambda\ZZ\,,
 \eqn
 which, together with \eqref{eq:values of fk} implies that 
 \bqn
 f_{n\k}(\widetilde{\rho}([\widehat\Gamma,\widehat\Gamma])\ker\, p)\subset\frac{1}{n}\ZZ+\lambda\ZZ\,.
 \eqn
 Since $\lambda\in\QQ$, then $\frac{1}{n}\ZZ+\lambda\ZZ$ is discrete in $\RR$.
Taking into account \eqref{eq:L'}, this implies that $f_{n\k}(L')\subset\RR$ is a discrete subset. 
Hence, if we denote by $(L')^\circ$ the identity component of $L'$, then
 \bq\label{eq:vanish}
 f_{n\k}|_{(L')^\circ}=0\,.
 \eq
This implies easily that $\k|_{L^\circ}$ vanishes as a bounded real class:
in fact, because of \eqref{eq:vanish} and \eqref{eq:dfk},
\bqn
0=df_{n\k}|_{(L')^\circ}=p^\ast(\k)|_{(L')^\circ}=(p|_{(L')^\circ})^\ast(\k|_{L^\circ})\,,
\eqn
where we used that,
since $p:G_{n\k}\to G$ is an open map, $p((L')^\circ)=L^\circ$.
 
 Consider now the subgroup $\Delta:=\rho^{-1}(\rho([\Gamma,\Gamma])\cap L^\circ)$ of $\Gamma$.
 Then $(\rho|_\Delta)^\ast(\k|_{L^\circ})=0$ and hence, 
 by hypothesis and definition of $\k$-weak maximality, $\lambda\ksib|_\Delta=0$.
 Since by hypothesis $\lambda\neq0$, then it must be that $\ksib|_\Delta=0$.
 Thus $\rho_h|_\Delta:\Delta\to\PU(1,1)$ satisfies
 the hypotheses of Lemma~\ref{lem:elementary} and hence is elementary.  
 Since $\Delta\vartriangleleft\pi_1(\Sigma)$, 
 this implies $\Delta$ is trivial and hence $\rho([\Gamma,\Gamma])\cap L^\circ$ is trivial
 as well.
 
 But since $L$ is a Lie group (as a Lie subgroup of $G$), 
 $L^\circ$ is open in $L$ and $\rho([\Gamma,\Gamma])\cap L^\circ$
 is dense in $L^\circ$.  Hence  $L^\circ$ is trivial, 
 and thus $L$, and consequently $\rho([\Gamma,\Gamma])$, is discrete.
 \end{proof}


\section{On the Radical Defined by a Bounded Class}\label{sec:radical}
In this section, given a locally compact group $L$ and a bounded rational class $\k\in\hcb^2(L,\RR)$,
we show the existence of a largest normal closed subgroup $\Rad_\k(L)$ on which the restriction
of the class vanishes.  We show moreover that the class $\k$ comes from a bounded real class
on the quotient $L/\Rad_\k(L)$, the radical of which is trivial.  If $L$ is a connected Lie group,
the quotient $L/\Rad_\k(L)$ is adjoint semisimple without compact factors.

\medskip
\begin{prop}\label{prop:closed} Let $L$ be a locally compact second countable group and 
let $\k\in\hcb^2(L,\RR)$ be a rational class.
There is a unique largest normal subgroup $N\vartriangleleft L$ 
with $\k|_N=0$ which, in addition, is closed.
\end{prop}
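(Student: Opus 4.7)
The plan is to let $\mathcal{F}$ denote the family of all closed normal subgroups $H \vartriangleleft L$ with $\k|_H = 0$, and to construct the desired maximal $N$ as the closure
\[
N := \overline{\langle \bigcup_{H \in \mathcal{F}} H \rangle}\,.
\]
This is manifestly closed and normal and contains every member of $\mathcal{F}$. To verify $N \in \mathcal{F}$, I would establish two stability properties: \textup{(a)} if $\{H_\alpha\}_\alpha$ is an upward directed subfamily of $\mathcal{F}$, then $\overline{\bigcup_\alpha H_\alpha} \in \mathcal{F}$; and \textup{(b)} if $H_1, H_2 \in \mathcal{F}$, then $\overline{H_1 H_2} \in \mathcal{F}$. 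Granted these, iterating (b) shows that the closure of any finite product of members of $\mathcal{F}$ lies in $\mathcal{F}$; these closures form an upward directed subfamily whose closed union equals $N$, so $N \in \mathcal{F}$ by (a). That $N$ is also largest among \emph{all} (not necessarily closed) normal subgroups on which $\k$ vanishes follows from the continuity argument below, which forces $\k|_{\overline M} = 0$ whenever $\k|_M = 0$.

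The main tool will be the canonical continuous homogeneous quasimorphism $f := f_{n\k} \colon L_{n\k} \to \RR$ of Section~\ref{sec:milnor_wood}, together with the reformulation: for a closed subgroup $H < L$ with preimage $\widetilde H := p_{n\k}^{-1}(H)$, the vanishing of $\k|_H$ is equivalent to $f|_{\widetilde H}$ being a continuous group homomorphism. Indeed, $[df] = p_{n\k}^\ast(\k)$ restricts on $\widetilde H$ to a class that vanishes precisely when $\k|_H$ does (the kernel $\ZZ$ of $p_{n\k}|_{\widetilde H}$ is amenable, so pullback is injective in bounded cohomology), and once the bounded class of the coboundary of a homogeneous quasimorphism vanishes, the quasimorphism must itself be a homomorphism, since two homogeneous quasimorphisms at bounded distance necessarily coincide. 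Granted this, (a) is immediate: by directedness, $f$ is a homomorphism on $\bigcup_\alpha \widetilde H_\alpha$, and since $p_{n\k}$ is a central extension by the discrete group $\ZZ$, hence a covering, taking closures commutes with $p_{n\k}^{-1}$, so continuity of $f$ propagates the identity to a homomorphism on $p_{n\k}^{-1}(\overline{\bigcup_\alpha H_\alpha})$.

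The bulk of the work is (b), which I would address by exhibiting a bona fide homomorphism $\phi \colon \widetilde H_1 \widetilde H_2 \to \RR$ coinciding with $f$. Set
\[
\phi(n_1 n_2) := f(n_1) + f(n_2)\qquad \text{for } n_1 \in \widetilde H_1,\ n_2 \in \widetilde H_2\,.
\]
Well-definedness rests on the observation that if $n_1 n_2 = n_1' n_2'$, then $z := n_1^{-1} n_1' = n_2 (n_2')^{-1}$ lies in $\widetilde H_1 \cap \widetilde H_2$, and the homomorphism property of $f|_{\widetilde H_i}$ for $i = 1,2$ then forces $f(n_1) + f(n_2) = f(n_1') + f(n_2')$. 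The homomorphism property of $\phi$ combines the normality of $\widetilde H_1$ in $L_{n\k}$ (rewrite $x_1 y_1 x_2 y_2 = x_1 (y_1 x_2 y_1^{-1})\, y_1 y_2$) with the conjugation-invariance of the homogeneous quasimorphism $f$. Finally, $f - \phi$ is bounded on $\widetilde H_1 \widetilde H_2$ by the defect of $f$ and is homogeneous since $\phi$ is a homomorphism; the standard $n$-th power argument then forces $f = \phi$ there, and by continuity $f|_{p_{n\k}^{-1}(\overline{H_1 H_2})}$ is a homomorphism, so $\overline{H_1 H_2} \in \mathcal{F}$. The main obstacle will be verifying the well-definedness and homomorphism property of $\phi$, as this is the step where the normality of \emph{both} $H_1$ and $H_2$ enters in an essential way.
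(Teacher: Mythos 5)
Your proposal is correct and takes essentially the same route as the paper: your step (b) --- defining $\phi(n_1n_2):=f(n_1)+f(n_2)$ on $\widetilde H_1\widetilde H_2$, checking well-definedness via $\widetilde H_1\cap\widetilde H_2$, using normality plus conjugation-invariance of the homogeneous quasimorphism for the homomorphism property, and invoking bounded-distance rigidity --- is precisely the proof of the paper's Lemma~\ref{lem:hom_qm}, while your reformulation of $\k|_H=0$ as the homomorphism property of $f_{n\k}$ on $p_{n\k}^{-1}(H)$ is exactly what the paper's proof of Proposition~\ref{prop:closed} calls ``an easy verification.'' Your directed-union bookkeeping just makes explicit the maximality step the paper leaves implicit, and one small remark: the homomorphism property of $\phi$ in fact only uses normality of \emph{one} of the two subgroups, the other conjugation being absorbed by the invariance of $f$.
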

This relies on the following:

\begin{lemma}\label{lem:hom_qm}  Let $f:L\to\RR$ be a continuous homogeneous quasimorphism.
\be
\item There is a unique largest normal subgroup $N_1\vartriangleleft L$ with $f|_{N_1}=0$. 
\item There is a unique largest normal subgroup $N_2\vartriangleleft L$ on which $f$ is a homomorphism.
\ee
Both $N_1$ and $N_2$ are closed.
\end{lemma}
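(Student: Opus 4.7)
The plan is, in each case, to reduce the problem to verifying that the family of closed normal subgroups of $L$ satisfying the property in question is closed under taking products. Granting this, the (closed) subgroup generated by all such normal subgroups will itself be normal with the property, hence unique and maximal, and continuity of $f$ together with the fact that the closure of a normal subgroup is normal will take care of closedness.

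For (1), the first step is to show that if $N_1,N_2\vartriangleleft L$ with $f|_{N_i}=0$, then $f$ vanishes on the (normal) subgroup $N_1N_2$. The key computational identity, proved by induction on $k$ using the normality of $N_2$, is
\[
(n_1 n_2)^k \;=\; n_1^k\, m_k, \qquad m_k \;:=\; \prod_{i=0}^{k-1} n_1^{-i}n_2 n_1^{i} \,\in\, N_2,
\]
for $n_j\in N_j$. Combining the homogeneity of $f$ with the defect inequality yields
\[
k\,\bigl|f(n_1 n_2)\bigr| \;=\; \bigl|f\bigl((n_1n_2)^k\bigr)\bigr| \;=\; \bigl|f(n_1^k m_k)\bigr| \;\le\; \bigl|f(n_1^k)\bigr| + \bigl|f(m_k)\bigr| + D(f) \;=\; D(f),
\]
since $n_1^k\in N_1$ and $m_k\in N_2$. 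Letting $k\to\infty$ forces $f(n_1n_2)=0$. Because $N_1$ and $N_2$ are normal one has $N_1N_2=N_2N_1$, so every element of the subgroup generated by $N_1\cup N_2$ has the form $n_1n_2$, and we conclude $f|_{N_1N_2}=0$. Iterating over all normal subgroups on which $f$ vanishes and passing to the union, we obtain the unique largest such normal subgroup $N_1$. Continuity of $f$ and the fact that the closure of a normal subgroup is normal show $\overline{N_1}$ has the same property, and by maximality $N_1=\overline{N_1}$.

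For (2), the same rearrangement applies, but now the crucial extra ingredient is the conjugation invariance $f(ghg^{-1})=f(h)$ of the homogeneous quasimorphism $f$. Assuming $f|_{N_i}$ is a homomorphism, the displayed formula for $m_k$ gives
\[
f(m_k) \;=\; \sum_{i=0}^{k-1} f\bigl(n_1^{-i} n_2 n_1^{i}\bigr) \;=\; k\,f(n_2),
\]
and combined with $f(n_1^k)=k f(n_1)$ and the defect bound this forces $f(n_1 n_2)=f(n_1)+f(n_2)$. A second rearrangement, writing $gg' = (n_1n_1')\bigl((n_1')^{-1}n_2 n_1'\bigr)n_2'$ with $g=n_1n_2$, $g'=n_1'n_2'$, then upgrades this pointwise identity to the statement that $f$ is a homomorphism on all of $N_1N_2$. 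The extraction of the maximal $N_2$ and the closedness step proceed exactly as in (1).

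The main obstacle is precisely this product computation: it is the only nontrivial input, and it relies on the combination of normality (to guarantee $m_k\in N_2$) and the conjugation invariance of homogeneous quasimorphisms (to evaluate $f$ on $m_k$). Without either ingredient the defect of $f$ could not be controlled, and the argument would collapse. Once this step is available, the rest of the lemma is purely formal.
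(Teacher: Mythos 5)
Your proof is correct, but it reorganizes the argument relative to the paper, and the differences are worth recording. The paper proves (2) first: given normal subgroups $M_1,M_2$ on which $f$ is a homomorphism, it defines a candidate map $\chi(m_1m_2):=f(m_1)+f(m_2)$, checks that $\chi$ is well defined (this is where elements of $M_1\cap M_2$ enter) and that it is a homomorphism (via normality and the conjugation invariance $f(m_2m_1'm_2^{-1})=f(m_1')$), and then concludes $f|_{M_1M_2}=\chi$ from the rigidity fact that a homogeneous quasimorphism at bounded distance from a homomorphism coincides with it; statement (1) is then deduced from (2) as $N_1=\ker(f|_{N_2})$, which is normal in $L$ again by conjugation invariance. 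You instead inline that rigidity fact by hand through the telescoping identity $(n_1n_2)^k=n_1^k m_k$ together with homogeneity, proving additivity pointwise and then on all of $N_1N_2$ via the rearrangement $gg'=(n_1n_1')\bigl((n_1')^{-1}n_2n_1'\bigr)n_2'$; this buys you two things: no auxiliary map $\chi$ and hence no well-definedness check, and a proof of (1) that is genuinely more elementary than the paper's route, since for (1) you only need $m_k\in N_2$ and $f|_{N_2}=0$ to get $f(m_k)=0$ --- conjugation invariance plays no role there, so your closing sentence overstates what part (1) requires. Two harmless slips, neither a gap: in a nonabelian group the factors of $m_k$ must be taken in descending order of $i$, i.e. $(n_1n_2)^k=n_1^k\,\bigl(n_1^{-(k-1)}n_2n_1^{k-1}\bigr)\cdots\bigl(n_1^{-1}n_2n_1\bigr)n_2$ rather than your ascending product, but your argument is insensitive to the order since you only use that $m_k\in N_2$ and, in (2), that $f(m_k)$ is a sum of $k$ terms each equal to $f(n_2)$ by conjugation invariance; and for closedness in (2) one should say explicitly that the identity $f(xy)=f(x)+f(y)$ persists on $\overline{N_2}$ by continuity of $f$ and of multiplication, after which normality of the closure and maximality give $N_2=\overline{N_2}$, exactly parallel to your argument for (1).
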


\begin{proof} Clearly (2) implies (1) with $N_1=\ker(f|_{N_2})$.

Let now $M_1,M_2$ be normal subgroups of $L$ such that $f|_{M_i}:M_i\to\RR$ is a homomorphism.
For $m_1\in M_1$ and $m_2\in M_2$, let $\chi(m_1m_2):=f(m_1)+f(m_2)$.
We claim that $\chi$ is well defined.  Indeed, if $m_1m_2=m'_1m'_2$ with $m_i,m'_i\in M_i$,
then $(m'_1)^{-1}m_1=m'_2m_2^{-1}\in M_1\cap M_2$.  
Thus $f((m'_1)^{-1}m_1)=f(m'_2m_2^{-1})$, which implies, 
taking into account  that $f$ is a homomorphism on $M_1$ and $M_2$, that 
\bqn
-f(m'_1)+f(m_1)=f(m'_2)-f(m_2)\,.
\eqn
This shows that $\chi$ is well defined.  Next we claim that $\chi$ is a homomorphism.
If $m_i,m'_i\in M_i$, and since $M_1$ is normal in $L$, we have
\bqn
\ba
  \chi((m_1m_2)(m'_1m'_2))
=&\chi(m_1(m_2m'_1m^{-1}_2)m_2m'_2)\\
=&f(m_1(m_2m'_1m^{-1}_2))+f(m_2m'_2)\\
=&f(m_1)+f(m_2m'_1m^{-1}_2)+f(m_2)+f(m'_2)\,.
\ea
\eqn
Since $f$ is a homogeneous quasimorphism, we have $f(m_2m'_1m^{-1}_2)=f(m'_1)$,
which implies that $\chi:M_1\, M_2\to\RR$ is a homomorphism.  Since $f$ is a quasimorphism,
we have in particular that for all $m_i\in M_i$
\bqn
|f(m_1m_2)-\chi(m_1m_2)|=|f(m_1m_2)-f(m_1)-f(m_2)|\leq C\,,
\eqn
for some constant $C$.  Thus the homogeneous quasimorphism $f|_{M_1M_2}$
is at finite distance from the homomorphism $\chi$ and hence $f|_{M_1M_2}=\chi$.

This shows the existence of a unique largest normal subgroup $N_2\vartriangleleft L$
on which $f$ is a homomorphism.  
By continuity of $f$,  the subgroup $N_2$ is closed.
\end{proof}

\begin{proof}[{Proof of Proposition~\ref{prop:closed}}]  
Let 
\bqn
\xymatrix@1{
 0\ar[r]
&\ZZ\ar[r]^i
&L_{n\k}\ar[r]^p
&L\ar[r]
&e}
\eqn
be the topological central extension in \eqref{eq:central extension G}
and  $f_{n\k}:L_{n\k}\to\RR$ the continuous
homogeneous quasimorphism such that 
$[df_{n\k}]=p^\ast(\k)\in\hcb^2(L_{n\k},\RR)$.
It is an easy verification that,
given a subgroup $N<L$, 
 the property $\k|_N=0$ is equivalent to the property that $f_{n\k}|_{p^{-1}(N)}$ is a homomorphism.
Together with Lemma~\ref{lem:hom_qm}, this concludes the proof of the proposition.
\end{proof}

\begin{defi} Let $\k\in\hcb^2(L,\RR)$ be a rational class.  
Denote by $\Rad_\k(L)$ the normal closed subgroup
of $L$ given by Proposition~\ref{prop:closed} and call it the $\k$-{\em radical} of $L$.
\end{defi}

\begin{cor}\label{cor:rad}  Let $L$ be a locally compact second countable group and let 
$\k\in\hcb^2(L,\RR)$ be a rational class.
\be
\item $\Rad_\k(L)\supset\Rad_a(L)$, where $\Rad_a(L)$ denotes the amenable radical of $L$.
\item Let 
\bq\label{eq:kahlerprojection}
\pi:L\to L/\Rad_\k(L)
\eq 
denote the canonical projection.  Then there is a unique
class $u\in\hcb^2(L/\Rad_\k(L),\RR)$ with $\pi^\ast(u)=\k$.  The restriction of $u$ to any
non-trivial closed normal subgroup of $L/\Rad_\k(L)$ does not vanish.
\ee
\end{cor}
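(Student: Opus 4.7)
The plan is to establish (1) and the non-vanishing assertion first since they are short, then tackle existence and uniqueness in (2), where I anticipate the main difficulty lies in uniqueness.

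For (1), since $\Rad_a(L)$ is a closed normal amenable subgroup of $L$, continuous bounded cohomology with trivial real coefficients vanishes on it in positive degree, so in particular $\k|_{\Rad_a(L)}=0$; the maximality established in Proposition~\ref{prop:closed} then yields $\Rad_a(L)\subseteq\Rad_\k(L)$. The non-vanishing assertion, granted the existence and uniqueness of $u$, is immediate: if $H\vartriangleleft L/\Rad_\k(L)$ were a non-trivial closed normal subgroup with $u|_H=0$, then $\pi^{-1}(H)$ would be closed and normal in $L$ with $\k|_{\pi^{-1}(H)}=0$ and strictly containing $\Rad_\k(L)$, contradicting the maximality of Proposition~\ref{prop:closed}.

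For existence of $u$ in (2), I would pick a bounded Borel cocycle $c\colon L^2\to\RR$ representing $\k$, use $\k|_{\Rad_\k(L)}=0$ to select a bounded Borel primitive $b\colon\Rad_\k(L)\to\RR$ with $db=c|_{\Rad_\k(L)^2}$, and choose a Borel section $\sigma\colon L/\Rad_\k(L)\to L$ of $\pi$ via a standard Borel selection theorem. Since $\sigma(x)\sigma(y)\sigma(xy)^{-1}\in\Rad_\k(L)$, the inflation-style recipe
$$c_Q(x,y):=c(\sigma(x),\sigma(y))-b\bigl(\sigma(x)\sigma(y)\sigma(xy)^{-1}\bigr)$$
produces a bounded Borel function on $(L/\Rad_\k(L))^2$ whose cocycle identity follows from a direct computation combining the cocycle identity for $c$ with the coboundary relation for $b$; verifying also that $\pi^\ast[c_Q]=\k$ (the difference being the coboundary of $x\mapsto b(\sigma(\pi(x))^{-1}x)$), I would set $u:=[c_Q]$.

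Uniqueness, which amounts to injectivity of $\pi^\ast\colon\hcb^2(L/\Rad_\k(L),\RR)\to\hcb^2(L,\RR)$, is the main obstacle: since $\Rad_\k(L)$ need not be amenable, Monod's isomorphism theorem for quotients by closed normal amenable subgroups does not apply. My strategy is to use the bounded analogue of the inflation--restriction five-term exact sequence in the Burger--Monod framework, which reduces injectivity to showing that every $L$-invariant continuous homomorphism $\Rad_\k(L)\to\RR$ extends to a continuous homomorphism $L\to\RR$. Such an extension would be produced by working on the central extension $L_{n\k}$, where by Lemma~\ref{lem:hom_qm} the quasimorphism $f_{n\k}$ is a genuine homomorphism on $p_{n\k}^{-1}(\Rad_\k(L))$, so that an $L$-invariant homomorphism on $\Rad_\k(L)$ can be combined with $f_{n\k}$ to yield the desired extension modulo an integer-valued correction absorbed by the central $\ZZ$.
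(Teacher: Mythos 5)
Parts (1) and the closing non-vanishing assertion are correct and coincide with the paper's arguments (amenability kills $\k$ on $\Rad_a(L)$, and maximality of $\Rad_\k(L)$ rules out any non-trivial closed normal $H\vartriangleleft L/\Rad_\k(L)$ with $u|_H=0$). The genuine gap is in part (2), and you have located the difficulty in the wrong place. For existence, the inflation recipe $c_Q(x,y):=c(\sigma(x),\sigma(y))-b\bigl(\sigma(x)\sigma(y)\sigma(xy)^{-1}\bigr)$ is \emph{not} a cocycle ``by a direct computation'': set $N:=\Rad_\k(L)$, write $n(x,y):=\sigma(x)\sigma(y)\sigma(xy)^{-1}$ and substitute $\sigma(xy)=n(x,y)^{-1}\sigma(x)\sigma(y)$ into $\delta c_Q(x,y,z)$; after applying the cocycle identity for $c$ you are left with mixed terms $c(n,g)$ where $n\in N$ but $g\in L$ is arbitrary, together with terms measuring the failure of conjugation invariance of $b$, of the shape $\bigl({}^{\sigma(x)}b-b\bigr)(\,\cdot\,)$. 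The relation $db=c|_{N^2}$ controls $c$ only on pairs from $N$, so none of these terms cancels; the same defect breaks your verification that $\pi^\ast[c_Q]=\k$ via the coboundary of $x\mapsto b(\sigma(\pi(x))^{-1}x)$. This is exactly the Hochschild--Serre obstruction: even in ordinary cohomology the kernel of restriction to $N$ is in general strictly larger than the image of inflation, the discrepancy living in $\h^1(L/N,\h^1(N,\RR))$. In the bounded category this obstruction does vanish, because there are no non-zero bounded continuous homomorphisms $N\to\RR$, but harvesting that fact requires first correcting $c$ within its class so that it becomes $N$-invariant in both variables before it descends; carried out in full, this amounts to proving exactness of $0\to\hcb^2(L/N,\RR)\to\hcb^2(L,\RR)\to\hcb^2(N,\RR)$ for an \emph{arbitrary} closed normal subgroup $N$, which is precisely the statement (Burger--Monod, Theorem~4.1.1) that the paper cites to obtain existence and uniqueness of $u$ in one stroke.

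By contrast, uniqueness --- injectivity of $\pi^\ast$ in degree two --- is the \emph{easy} half, and the machinery you propose for it is both unnecessary and too vague to check. If $a$ is a normalized bounded Borel cocycle on $L/N$ with $\pi^\ast a=d\beta$ for some bounded Borel function $\beta$ on $L$, then $d\beta|_{N^2}=0$, so $\beta|_N$ is a bounded homomorphism into $\RR$, hence zero; the identity $d\beta(g,n)=\pi^\ast a(\pi(g),e)=0$ then gives $\beta(gn)=\beta(g)$, so $\beta$ descends to a bounded primitive of $a$ on $L/N$, i.e. $[a]=0$. Equivalently, in the bounded inflation--restriction sequence the transgression source is $\hcb^1(N,\RR)^{L/N}=0$, so no extension problem for $L$-invariant homomorphisms $N\to\RR$ ever arises, and the detour through $L_{n\k}$ and the quasimorphism $f_{n\k}$ (with an unexplained ``integer-valued correction absorbed by the central $\ZZ$'') is not needed. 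In summary: your hard direction (existence) is under-proved --- as written the construction fails on the mixed terms --- while your declared ``main obstacle'' (uniqueness) admits a three-line argument; both halves are subsumed by the single citation the paper uses.
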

\begin{proof} (1) follows from the fact that the bounded cohomology of an amenable group vanishes.

\medskip
\noindent
(2) The first assertion follows from the exactness of the short sequence
\bqn
\xymatrix{
0\ar[r]
&\hcb^2(L/N,\RR)\ar[r]
&\hcb^2(L,\RR)\ar[r]
&\hcb^2(N,\RR)\,,
}
\eqn
where $N\vartriangleleft L$ is any closed normal subgroup 
\cite[Theorem~4.1.1]{Burger_Monod_GAFA}.  The second assertion follows
from the first and the maximality of $\Rad_\k(L)$.
\end{proof}

We denote by $\hhcb^2(L,\ZZ)$ the cohomology of the complex of integer valued bounded Borel cochains on $L$.
If  $\vk\in\hhcb^2(L,\ZZ)$, we denote by $\vkr$ the image of $\vk$ in $\hcb^2(L,\RR)$.

\medskip
The above discussion applied to a general connected Lie group $L$
has the following nice consequences.

\begin{cor}\label{cor:adjoint}  Let $L$ be a connected Lie group and $\vk\in\hhcb^2(L,\ZZ)$.
Then $H:=L/\Rad_\vkr(L)$ is connected adjoint of Hermitian type 
and a direct product $H=H_1\times\dots\times H_n$ of simple non-compact factors.
\end{cor}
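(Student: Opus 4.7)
The plan is to combine Corollary \ref{cor:rad} with the structure theory of connected Lie groups and the classical fact that the second continuous bounded cohomology of a connected simple Lie group is non-zero exactly when the group is of Hermitian type.

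First I would observe that $H$ is connected as a continuous image of the connected group $L$. Since Corollary \ref{cor:rad}(1) gives the inclusion $\Rad_{\vkr}(L)\supset\Rad_a(L)$, the group $H$ is a further quotient of $L/\Rad_a(L)$, so it suffices to understand the latter. I would argue that $L/\Rad_a(L)$ is adjoint, semisimple, and without compact factors: the solvable radical $R$ of $L$ is amenable; the preimage in $L$ of each compact factor of the semisimple quotient $L/R$ is a closed amenable (compact-by-solvable) normal subgroup of $L$; and the preimage in $L$ of the discrete center of the remaining non-compact semisimple piece is likewise a closed amenable normal subgroup. All three are contained in $\Rad_a(L)$, so nothing solvable, compact, or central survives in $L/\Rad_a(L)$.

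The structure theory of adjoint connected semisimple Lie groups without compact factors then gives a decomposition $L/\Rad_a(L)=H_1'\times\cdots\times H_m'$ into simple adjoint non-compact factors, in which every closed connected normal subgroup is a subproduct. Consequently $H$, being a quotient of $L/\Rad_a(L)$ by the closed normal subgroup $\Rad_{\vkr}(L)/\Rad_a(L)$, is itself a subproduct, so $H=H_1\times\cdots\times H_n$ is a direct product of simple adjoint non-compact Lie groups.

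It remains to upgrade \emph{non-compact} to \emph{of Hermitian type}. For this I would apply Corollary \ref{cor:rad}(2): the unique class $u\in\hcb^2(H,\RR)$ with $\pi^{\ast}(u)=\vkr$ has non-zero restriction to every non-trivial closed normal subgroup of $H$, in particular to each simple factor $H_i$. Hence $\hcb^2(H_i,\RR)\neq 0$ for every $i$, and the theorem of Burger--Monod on the second bounded cohomology of connected simple Lie groups then forces each $H_i$ to be of Hermitian type. The main obstacle is the bookkeeping in the first step, namely verifying that all the relevant amenable pieces of successive quotients really do lift to closed amenable normal subgroups of $L$; once this is done, the rest is a direct appeal to Corollary \ref{cor:rad}(2) and to the known non-vanishing criterion.
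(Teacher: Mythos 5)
Your proof is correct and takes essentially the same route as the paper's: reduce via Corollary~\ref{cor:rad}(1) to the quotient $L/\Rad_a(L)$, which is adjoint semisimple without compact factors, then apply Corollary~\ref{cor:rad}(2) to get $u|_{H_j}\neq 0$, hence $\hcb^2(H_j,\RR)\neq 0$ and each simple factor is of Hermitian type. The only difference is that you spell out the bookkeeping the paper treats as standard (amenability of the lifted solvable, compact, and central pieces, and the fact that normal subgroups of a product of adjoint simple factors are subproducts -- which, incidentally, holds for all normal subgroups since adjoint simple factors are abstractly simple, so your restriction to closed connected ones is not even needed).
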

\begin{proof}
Since the quotient $L/\Rad_a(L)$ of $L$ by its amenable radical is adjoint semisimple
without compact factors, so is $H$ (Corollary~\ref{cor:rad}(1)).  
Let $H=H_1\times\dots\times H_n$ be the direct product decomposition into simple factors.
Let $u\in\hcb^2(H,\RR)$ be such that $\pi^\ast(u)=\vkr$, where $\pi:L\to H$
is the projection in \eqref{eq:kahlerprojection}.
According to Corollary~\ref{cor:rad}(2), $u|_{H_j}\neq0$ and hence $\hcb^2(H_j,\RR)\neq0$:
thus $H_j$ is of Hermitian type for every $1\leq j\leq n$ and hence so is $H$.
\end{proof}
The following definition will be needed in the next section.
\begin{defi}\label{defi:k-levi}
Let $L$ be a connected Lie group which admits a closed Levi factor
and $\vk\in\hhcb^2(L,\ZZ)$. 
A {\em $\vk$-Levi factor} of $L$ is a connected semisimple subgroup $S$ 
with finite center such that $\pi(S)=L/\Rad_\k(L)$, where $\pi$ is as in \eqref{eq:kahlerprojection},
and $\ker(\pi|_S)$ is the center $Z(S)$ of $S$.
\end{defi}
If $L_0$ is a Levi factor of $L$, a $\vk$-Levi factor
of $L$ is nothing but the product 
of the almost simple factors in $L$ whose image via $\pi$ is non-trivial.


\section{Rationality Questions}\label{sec:rationality}
The class $u$ in Corollary~\ref{cor:rad}(2) is {\em a priori} only a real class.
We show in this section that if $L$ is a closed subgroup of a group $G$
of Hermitian type and $\k$ is the restriction of a bounded rational class on $G$, 
then $u$ has nice integrality properties. 

If in particular $\k=\kgb|_L$ is the restriction of the bounded K\"ahler class,
then the class $u$ is the linear combination of the bounded K\"ahler classes
of the individual simple factors of $L/\Rad_{\kgb}(L)$ with rational coefficients
whose denominators are bounded by an integer depending only on $G$.

\medskip
If $G$ is a Lie group of Hermitian type, $L<G$ a closed subgroup and $\vk\in\hhcb^2(G,\ZZ)$, 
we denote by $\vkrl$ the restriction of $\vkr$ to $L$.
\begin{prop}\label{prop:adjoint} Let $G$ be a Lie group of Hermitian type and 
let $L<G$ be a closed connected subgroup that admits a closed Levi factor $S$.
Let $\vk\in\hhcb^2(G,\ZZ)$ and let $\pi:L\to H:=L/\Rad_\vkrl(L)$ denote the canonical projection.
Let $p_j:H\to H_j$ be the projection onto the simple factors of $H$ (see Corollary~\ref{cor:adjoint})
and let $\vkj$ be a generator of $\hhcb^2(H_j,\ZZ)$, for $j=1,\dots,n$.
If $u\in\hcb^2(H,\RR)$ is such that $\pi^\ast(u)=\vkrl$
(see Corollary~\ref{cor:rad}(2)), then
\bq\label{eq:ucoordinates}
u=\sum_{j=1}^n\lambda_jp_j^\ast(\vkrj)\
\eq
with
\bq\label{eq:lambdai}
\lambda_j\in\frac{1}{|Z(S)|}\ZZ\,,
\eq
where $Z(S)$ denotes the center of the $\vk$-Levi factor of $L$.
\end{prop}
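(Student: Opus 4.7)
I would proceed in three steps.

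\emph{Step 1 (Decomposition of $u$).} By Corollary~\ref{cor:adjoint}, $H=L/\Rad_{\vkrl}(L)$ is a direct product $H_1\times\dots\times H_n$ of simple non-compact adjoint Hermitian Lie groups. Since for each $H_j$ the space $\hcb^2(H_j,\RR)$ is one-dimensional, spanned by $\vkrj$, and since $\hcb^1(H_j,\RR)=0$, the standard decomposition for continuous bounded cohomology of products of connected semisimple Lie groups yields
\[
\hcb^2(H,\RR)=\bigoplus_{j=1}^n\RR\cdot p_j^\ast(\vkrj).
\]
Thus $u$ admits a unique expression $u=\sum_{j=1}^n\lambda_j\,p_j^\ast(\vkrj)$ with $\lambda_j\in\RR$; the task is to pin down the denominators of the $\lambda_j$.

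\emph{Step 2 (Factoring through the Levi).} By Definition~\ref{defi:k-levi}, the restriction $q:=\pi|_S\colon S\to H$ is surjective with kernel $Z(S)$, which is finite by hypothesis, so $q$ is a finite central covering homomorphism. Writing $S$ as the almost direct product of its simple factors $S_1,\dots,S_n$ and reordering, each $S_j$ maps onto the corresponding $H_j$ by a finite central isogeny $\rho_j\colon S_j\to H_j$, and trivially onto the other simple factors of $H$. The kernel $K_j$ of $\rho_j$ is contained in $S_j\cap Z(S)$, so by Lagrange $|K_j|$ divides $|Z(S)|$. Pulling back the decomposition of $u$ through $q$ and restricting to $S_j\subset S$, all terms with index $i\neq j$ vanish, leaving in $\hcb^2(S_j,\RR)\cong\RR$ the identity
\[
\vkrl\big|_{S_j}=(q^\ast u)\big|_{S_j}=\lambda_j\,\rho_j^\ast(\vkrj).
\]

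\emph{Step 3 (Integrality comparison on one factor).} Because $\vk\in\hhcb^2(G,\ZZ)$, the class $\vk|_{S_j}\in\hhcb^2(S_j,\ZZ)$ is integral, so $\vkrl|_{S_j}$ lies in the integer lattice $\ZZ\cdot\vk^{\RR}_{S_j}$, where $\vk_{S_j}$ denotes a generator of $\hhcb^2(S_j,\ZZ)\cong\ZZ$. It therefore suffices to prove the divisibility relation $\rho_j^\ast(\vkj)=n_j\,\vk_{S_j}$ in $\hhcb^2(S_j,\ZZ)$ for some positive integer $n_j$ dividing $|K_j|$, for then $\lambda_j\,n_j\in\ZZ$ and hence $\lambda_j\in\frac{1}{n_j}\ZZ\subseteq\frac{1}{|Z(S)|}\ZZ$. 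I expect this divisibility to be the technical heart of the argument; it can be obtained by interpreting $\hhcb^2(\,\cdot\,,\ZZ)$ as classifying topological central $\ZZ$-extensions and tracking how the universal such extension of $H_j$ pulls back through the finite central isogeny $\rho_j$, or equivalently by analyzing the inflation-restriction five-term sequence associated to the central extension $1\to K_j\to S_j\to H_j\to 1$ with $\RR/\ZZ$-coefficients, using that $\hcb^1(H_j,\RR/\ZZ)=0$ since $H_j$ is connected and perfect.
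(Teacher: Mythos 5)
Your plan is correct and its skeleton coincides with the paper's proof: first the decomposition $u=\sum_j\lambda_j p_j^\ast(\vkrj)$ (the paper gets this, together with an integral basis statement, from Proposition~7.7(3) of \cite{Burger_Iozzi_Wienhard_tol}, while your product-decomposition argument for $\hcb^2(H,\RR)$ is an adequate substitute), and then control of the denominators via the isogeny coming from the $\vk$-Levi factor. The divisibility relation you isolate in Step~3, namely $\rho_j^\ast(\vkj)=n_j\,\vk_{S_j}$ with $n_j\mid |K_j|$, is exactly the factorwise form of the paper's key Lemma~\ref{lem:rational}, which says that for a surjection $\omega\colon S\to H$ of connected semisimple Lie groups with finite center the map $\omega^\ast_\ZZ$ on $\hhcb^2(\,\cdot\,,\ZZ)$ is injective with image containing $|\ker\omega|\,\hhcb^2(S,\ZZ)$; the paper applies it once, globally, to $\omega=\pi|_S$, whereas you apply it factor by factor (your version even gives the marginally sharper bound $\lambda_j\in\frac{1}{|K_j|}\ZZ$ with $K_j=S_j\cap Z(S)$, which the paper's statement coarsens to $\frac{1}{|Z(S)|}\ZZ$ anyway). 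The genuine point of divergence is how this lemma is proved: the paper uses Wigner's theorem together with the isomorphism $\hhcb^2\cong\hhc^2$ to identify $\hhcb^2(M,\ZZ)$ with $\homc(K_M,\RR/\ZZ)$ for $K_M$ a maximal compact, so that $\omega^\ast_\ZZ$ becomes precomposition of characters with the surjection $K_S\to K_H$, and the divisibility reduces to the elementary fact that a character of $K_S$ descends if and only if it kills $\ker\omega\subset K_S$, while $|\ker\omega|\chi$ always does.

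One caution on the step you defer. As stated, the five-term sequence for $1\to K_j\to S_j\to H_j\to 1$ with $\RR/\ZZ$-coefficients controls the \emph{kernel} of the inflation $\hc^2(H_j,\RR/\ZZ)\to\hc^2(S_j,\RR/\ZZ)$ (it is the image of $\hom(K_j,\RR/\ZZ)$, hence killed by $|K_j|$), whereas what you need is a statement about the \emph{image} of the pullback on integral classes; these are not the same assertion. The bridge can be built, but it needs more than $\hc^1(H_j,\RR/\ZZ)=0$: one must invoke a Lyndon--Hochschild--Serre sequence valid for locally compact groups (Moore's measurable cochains), the injectivity of the change of coefficients $\hhc^2(S_j,\ZZ)\to\hc^2(S_j,\RR)$ (which follows from $\homc(S_j,\RR/\ZZ)=0$), and the fact that $\rho_j^\ast$ is an isomorphism on $\hc^2(\,\cdot\,,\RR)$ (van Est, since $S_j$ and $H_j$ share their symmetric space). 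With these inputs your sketch closes: writing $\vk_{S_j}^{\RR}=\rho_j^\ast(\mu)$, the image of $\mu$ in $\hc^2(H_j,\RR/\ZZ)$ inflates to zero, hence is killed by $|K_j|$, so $|K_j|\mu$ is integral and $|K_j|\vk_{S_j}$ lies in the image of $\rho_j^\ast$ on integral classes. This is workable but noticeably heavier than the paper's route, which, after the Wigner reduction to compact groups, amounts to the degree-one part of the very same sequence and is settled by bare character theory.
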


\begin{proof}
According to \cite[Proposition~7.7\,(3)]{Burger_Iozzi_Wienhard_tol},
the set $\{p_j^\ast(\vkj):\,1\leq j\leq n\}$ is a basis of $\hhcb^2(H,\ZZ)$ 
corresponding to the basis $\{p_j^\ast(\vkrj):\,1\leq j\leq n\}$ of $\hcb^2(H,\RR)$.
It follows that 
\bqn
u=\sum_{j=1}^n\lambda_jp_j^\ast(\vkrj)\,,
\eqn
with $\lambda_j\in\RR$.

\medskip
In order to show that the $\lambda_j$ are rational with an universal bound on the denominator,
we consider the following diagram
\bq\label{rational}
\xymatrix{
  \hcb^2(H,\RR)\ar[r]^{(\pi|_S)^\ast_\RR}\ar@/^2pc/[rr]^{\pi^\ast}
&\hcb^2(S,\RR)
&\hcb^2(L,\RR)\ar[l]_{\res}\\
  \hhcb^2(H,\ZZ)\ar[u]\ar[r]_{(\pi|_S)^\ast_\ZZ}
&\hhcb^2(S,\ZZ)\ar[u]
&\hhcb^2(L,\ZZ)\,,\ar[u]\ar[l]^{\res}
}
\eq
where $\res$ is the restriction map in cohomology and the vertical arrows are the change of coefficients.

If $(\pi|_S)^\ast_\ZZ$ were surjective, again \cite[Proposition~7.7\,(3)]{Burger_Iozzi_Wienhard_tol}, 
the commutativity of \eqref{rational} and the fact that 
$(\pi|_S)^\ast_\RR$ is an isomorphism would readily imply the integrality of the $\lambda_j$.
This is however not necessarily true and the following lemma identifies explicitly 
the nature of the map $(\pi|_S)^\ast_\ZZ$.
\begin{lemma}\label{lem:rational} Let $\omega:S\to H$ be a surjective homomorphism
between connected semisimple Lie groups with finite center.
Then the map
\bqn
\xymatrix@1{
\omega^\ast_\ZZ:\hhcb^2(H,\ZZ)\ar[r]
&\hhcb^2(S,\ZZ)}
\eqn
is injective and 
\bqn
\operatorname{Image}(\omega^\ast_\ZZ)\supset\,|\ker\,\omega|\,\hhcb^2(S,\ZZ)\,,
\eqn
where $|\ker\,\omega|$ denotes the cardinality of $\ker\,\omega$ 
and $\hhcb^2(S,\ZZ)$ is considered as a $\ZZ$-module.
\end{lemma}
We postpone the proof of the lemma and use its conclusion with $\omega=\pi|_S$.
If $Z(S)=\ker(\pi|_S)$ denotes the center of $S$, the same argument as above,
applied to $|Z(S)|\vkrs\in(\pi|_S)^\ast_\ZZ(\hhcb^2(H,\ZZ))$ shows that
$|Z(S)|\,u$ is, in fact, in $\hhcb^2(H,\ZZ)$
and hence its coordinates $|Z(S)|\,\lambda_j$ are integers, $j=1,\dots,n$. 
\end{proof}

\begin{proof}[Proof of Lemma~\ref{lem:rational}]
If $M$ is any connected semisimple Lie group with finite center
and maximal compact $K_M$, Wigner's theorem asserts that
the restriction map
\bqn
\xymatrix@1{
\hhc^2(M,\ZZ)\ar[r]
&\hhc^2(K_M,\ZZ)
}
\eqn
is an isomorphism \cite{Wigner}.
This, together with  the fact that the comparison map 
\bqn
\xymatrix{
\hhcb^2(M,\ZZ)\ar[r]
&\hhc^2(M,\ZZ)}
\eqn
is an isomorphism \cite[Proposition~7.7]{Burger_Iozzi_Wienhard_tol},
implies the isomorphism
\bqn
\xymatrix@1{
j:\hhcb^2(M,\ZZ)\ar[r]
&\hhc^2(K_M,\ZZ)\,.
}
\eqn
Using now the long exact sequence in cohomology  
associated to the short exact sequence
\bqn
\xymatrix{
0\ar[r]
&\ZZ\ar[r]
&\RR\ar[r]
&\RR/\ZZ\ar[r]
&0\,,}
\eqn
and the fact that the terms of positive degree and real coefficients vanish,
we obtain that the connecting homomorphism
\bqn
\xymatrix@1{
\homc(K_M,\RR/\ZZ)\ar[r]^-{\delta_{K_M}}
&\hhc^2(K_M,\ZZ)}
\eqn
is an isomorphism. 

Given $\omega:S\to H$ as in the statement of the lemma,
let $K_H<H$ be a maximal compact subgroup.  
Then $K_S:=\omega^{-1}(K_H)$ is a maximal compact in $S$.
One verifies then that the diagram
\bqn
\xymatrix{
  \hhcb^2(H,\ZZ)\ar[r]^{\omega^\ast_\ZZ}\ar[d]_{\delta_{K_H}^{-1}\circ j}
&\hhcb^2(S,\ZZ)\ar[d]^{\delta_{K_S}^{-1}\circ j}\\
  \homc(K_H,\RR/\ZZ)\ar[r]_{\omega^\ast}
&\homc(K_S,\RR/\ZZ)\,,
}
\eqn
where the vertical arrows are isomorphisms and 
the bottom one is the precomposition with $\omega|_{K_S}$,
is commutative.  Then the assertion of the lemma follows
from the fact that $\ker\,\omega\subset K_S$ and
that 
$\omega|_{K_S}:K_S\to K_H$ is surjective.
\end{proof}

Recall that the bounded K\"ahler class $\kgb\in\hcb^2(G,\RR)$ 
of a Lie group $G$ of Hermitian type is rational 
\cite[\S~5]{Burger_Iozzi_supq}.

\begin{defi} Let $G$ be a Lie group of Hermitian type
and $\kgb\in\hcb^2(G,\RR)$ its bounded K\"ahler class.
The {\em K\"ahler radical} $\Rad_{\kgb}(L)$ of a closed subgroup $L<G$ 
is the $\kgb|_L$-radical of $L$.

If $L$ admits a closed Levi factor, a {\em K\"ahler-Levi factor} of $L$
is a $\kgb|_L$-Levi factor.
\end{defi}

\begin{cor}\label{cor:4.7} Let $G$ be a Lie group of Hermitian type
and let $L<G$ be a closed connected subgroup that admits a closed Levi factor.
Then the group $H:=L/\Rad_{\kgb}(L)$ is connected adjoint of Hermitian type, 
and admits a direct product decomposition $H=H_1\times\dots\times H_n$  
of simple non-compact factors.

Moreover if $\pi:L\to H$ and $p_j:H\to H_j$ 
are the canonical projections, then there exists an integer $\ell_G\geq1$
depending only on $G$ such that 
\bqn
\kgb|_L=\pi^\ast\left(\sum_{j=1}^n\nu_jp_j^\ast(\k_{H_j}^{\mathrm b})\right)\,,
\eqn
where $\nu_j\in\frac{1}{\ell_G}\ZZ$.
\end{cor}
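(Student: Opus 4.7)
The plan is to deduce the statement from Corollary~\ref{cor:adjoint} together with Proposition~\ref{prop:adjoint}, applied to the distinguished integral class $\vk := q_G \kgb^{\mathrm b} \in \hhcb^2(G,\ZZ)$, where $q_G \geq 1$ denotes the smallest positive integer such that $q_G \kgb$ is representable by an integer-valued bounded Borel cocycle. Since $\Rad_{\vkr}(L) = \Rad_{\kgb}(L)$ (both are determined by the vanishing of the same real class, up to a nonzero scalar), the structural assertion---that $H$ is connected adjoint of Hermitian type and decomposes as $H = H_1 \times \cdots \times H_n$ into simple non-compact factors, together with the projections $\pi$ and $p_j$---is an immediate consequence of Corollary~\ref{cor:adjoint}.

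For the explicit formula I would next apply Proposition~\ref{prop:adjoint} with this choice of $\vk$. It produces a class $u' \in \hcb^2(H, \RR)$ with $\pi^*(u') = \vkrl = q_G (\kgb|_L)$ and a decomposition
\begin{equation*}
u' = \sum_{j=1}^n \lambda_j\, p_j^*(\vkrj), \qquad \lambda_j \in \frac{1}{|Z(S)|}\ZZ,
\end{equation*}
where $S$ is a K\"ahler--Levi factor of $L$. The uniqueness clause of Corollary~\ref{cor:rad}(2) then forces the unique class $u \in \hcb^2(H,\RR)$ with $\pi^*(u) = \kgb|_L$ to equal $u'/q_G$. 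Each simple adjoint Hermitian factor $H_j$ has one-dimensional $\hcb^2(H_j,\RR)$, so $\vkrj$ and $\k_{H_j}^{\mathrm b}$ are proportional by a nonzero rational scalar $c_j$, whose denominator is controlled by the smallest integer $q_{H_j}$ making $q_{H_j} \k_{H_j}^{\mathrm b}$ integral. Substituting $\vkrj = c_j\, \k_{H_j}^{\mathrm b}$ and dividing by $q_G$ yields
\begin{equation*}
u = \sum_{j=1}^n \nu_j\, p_j^*(\k_{H_j}^{\mathrm b}), \qquad \nu_j = \frac{\lambda_j c_j}{q_G},
\end{equation*}
which gives the sought expression for $\kgb|_L = \pi^*(u)$.

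The principal obstacle is the uniformity of $\ell_G$ as $L$ varies. The denominator of $\nu_j$ is controlled by $q_G$, by $|Z(S)|$, and by the denominator of $c_j$, hence by $q_G$, $|Z(S)|$, and $q_{H_j}$. To bound these quantities uniformly I would invoke the classical finiteness result that, up to conjugacy inside $G$, only finitely many connected semisimple subgroups of $G$ occur (a consequence of Dynkin's classification of semisimple subalgebras combined with a real-form argument). Consequently the K\"ahler--Levi factor $S$ takes only finitely many isomorphism types, so $|Z(S)|$ ranges over a finite set, and likewise only finitely many simple adjoint Hermitian Lie groups appear as factors $H_j$, bounding the $q_{H_j}$ from above. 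Setting $\ell_G$ to be $q_G$ times the least common multiple of all these quantities produces a single integer, depending only on $G$, with $\nu_j \in \frac{1}{\ell_G}\ZZ$ for every admissible $L$.
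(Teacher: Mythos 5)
Your proof is correct and follows essentially the same route as the paper's: apply Proposition~\ref{prop:adjoint} to the integral class $\vk$ with $\vkr=q_G\kgb$, rescale using the uniqueness in Corollary~\ref{cor:rad}(2), convert the generators $\vkrj$ into the bounded K\"ahler classes $\k_{H_j}^{\mathrm b}$, and obtain the uniformity of $\ell_G$ from the finiteness of conjugacy classes of connected semisimple subgroups of $G$. One small correction: by the paper's Remark~\ref{rem:constants} the conversion constant is $\vkrj=\frac{q_{H_j}}{m_{H_j}}\k_{H_j}^{\mathrm b}$, so the denominator of your $c_j$ is the divisibility integer $m_{H_j}$ rather than being controlled by $q_{H_j}$; this does not affect your conclusion, since $c_j$ depends only on the isomorphism type of $H_j$ and the same finiteness argument bounds it uniformly.
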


\begin{rem}\label{rem:constants} Let $H$ be a Lie group of Hermitian type.
We denote by $q_H$ the smallest integer such that there exists $\vk\in\hcb^2(H,\ZZ)$
with $q_H\kappa_H^{\rm b}=\vkr$.  
If in addition $H$ is simple and $\vk_H$ is a generator of $\hcb^2(H,\ZZ)$,
then there exists an integer $m_H$ such that $\vk=m_H\vk_H$.  It follows that 
\bq\label{eq:generatorhi}
\vk_H^\RR=\frac{q_H}{m_H}\kappa_H^{\rm b}\,.
\eq
\end{rem}

\begin{proof}[Proof of Corollary~\ref{cor:4.7}] 
We apply Proposition~\ref{prop:adjoint} to $\vk\in\hcb^2(G,\ZZ)$ 
such that $\vkr=q_G\kgb$.  Then \eqref{eq:ucoordinates} and \eqref{eq:lambdai},
together with \eqref{eq:generatorhi}, show that 
\bqn
\kgb|_L=\pi^\ast\left(\sum_{j=1}^n\nu_jp_j^\ast(\k_{H_j}^{\mathrm b})\right)\,,
\eqn
with 
\bqn
\nu_j\in\frac{q_{H_j}}{q_Gm_{H_j}|Z(S)|}\ZZ\,,
\eqn
where $|Z(S)|$ is the cardinality of the center of a K\"ahler-Levi factor $S$.

Since there are only finitely many possible conjugacy classes of connected semisimple 
subgroups of $G$, we obtain the result.
\end{proof}


\section{Structure of Weakly Maximal Representations}\label{sec:structure}
In \S~\ref{sec:discrete} the discreteness of a $\k$-weakly maximal representation was proven 
under the assumption that the Toledo invariant $\T_\k(\rho)$ is rational.
In this section we prove that if $\k$ is the bounded K\"ahler class, this is always the case and that
the representation into the quotient of its Zariski closure by the K\"ahler radical is also discrete and injective.  
The definition and properties of the K\"ahler radical will be essential to show that the Toledo 
invariant of the representation into the quotient is also non-zero.  

An interesting feature of the proof of the rationality of the Toledo invariant is that 
it depends upon showing first that the single factors of the quotient 
by the K\"ahler radical are of tube type.

\medskip
Let $G$ be a Lie group of Hermitian type and $\kgb\in\hcb^2(G,\RR)$ its bounded K\"ahler class.
For ease of notation, we denote the Toledo invariant $\T_{\kgb}$ by
\bqn
\T:\hom(\pi_1(\Sigma),G)\to\RR\,.
\eqn
\begin{defi} A homomorphism $\rho:\pi_1(\Sigma)\to G$ is weakly maximal if
\bqn
\T(\rho)=2\|\rho^\ast(\kgb)\|\,|\chi(\Sigma)|\,,
\eqn
that is if $\rho$ is $\kgb$-weakly maximal in the sense of Definition~\ref{defi:k-wealy-maximal}.
\end{defi}

\begin{thm}\label{thm:structure}  Let $G=\gG(\RR)^\circ$ be of Hermitian type, 
where $\gG$ is a connected semisimple algebraic group defined over $\RR$,
and let $\rho:\pi_1(\Sigma)\to G$ be a weakly maximal homomorphism.
Let $L$ be the connected component of the real points of the Zariski closure
of the image of $\rho$
and let $\Gamma:=\rho^{-1}(L)$.
Assume that $\T(\rho)\neq0$.  Then:
\be
\item the group $L/\Rad_{\kgb}(L)$ is Hermitian of tube type;
\item there is an integer $\ell_G\geq1$ depending only on $G$ (see Corollary~\ref{cor:4.7}), such that 
\bqn
\T(\rho)\in\frac{|\chi(\Sigma)|}{\ell_G}\ZZ\,;
\eqn
\item the composition
\bqn
\xymatrix@1{
\Gamma\ar[r]^{\rho|_\Gamma}
&L\ar[r]^-\pi
&L/\Rad_{\kgb}(L)
}
\eqn
is injective with discrete image.
\ee
\end{thm}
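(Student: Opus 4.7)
Since $\rho$ is weakly maximal, Proposition~\ref{prop:equiv} applied with $\k = \kgb$ yields $\rho^\ast(\kgb) = \lambda\,\ksib$ where $\lambda := \T(\rho)/|\chi(\Sigma)| \ne 0$. Set $\Gamma := \rho^{-1}(L)$, which has finite index in $\pi_1(\Sigma)$ and corresponds to a finite cover $\Sigma_\Gamma \to \Sigma$ with $\ksib|_\Gamma = \kappa_{\Sigma_\Gamma}^{\rm b}$; note that $\rho|_\Gamma$ has Zariski dense image in $L$. Combining the decomposition $H := L/\Rad_{\kgb}(L) = H_1\times\cdots\times H_n$ from Corollary~\ref{cor:adjoint} with the explicit form from Corollary~\ref{cor:4.7} produces the master identity
\bqn
\sum_{j=1}^n \nu_j\, \rho_j^\ast(\kappa_{H_j}^{\rm b}) = \lambda\,\kappa_{\Sigma_\Gamma}^{\rm b},\qquad \nu_j \in \tfrac{1}{\ell_G}\ZZ,
\eqn
where $\rho_j := p_j\circ\pi\circ\rho|_\Gamma$ is Zariski dense in the adjoint simple Hermitian $H_j$, and each $\nu_j$ is non-zero by Corollary~\ref{cor:rad}(2) together with the maximality of $\Rad_{\kgb}(L)$.

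For part (1), the aim is to show that each $H_j$ is of tube type; this is the main obstacle. The master identity exhibits $\pi\circ\rho|_\Gamma: \Gamma \to H$ as $u$-weakly maximal for $u := \sum_j \nu_j p_j^\ast(\kappa_{H_j}^{\rm b})$ in the sense of Definition~\ref{defi:k-wealy-maximal}, with Zariski dense image in every simple factor. One then invokes the rigidity theory for Zariski dense representations into simple Hermitian targets whose bounded K\"ahler pullback is a scalar multiple of the bounded fundamental class, in the spirit of the tight homomorphism classification of \cite{Burger_Iozzi_Wienhard_tight, Hamlet_tighthol, Hamlet_tight}. The key ingredient is the equivariant boundary map into the Shilov boundary of $H_j$, whose image must be compatible with the extremal values of the Bergmann cocycle, a situation only realizable when $H_j$ is of tube type.

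Once (1) is known, part (2) follows by working at the level of the central $\ZZ$-extensions of \S~\ref{subsec:herm}. Lifting the master identity to $\widehat\Gamma$ via any compatible lift $\widetilde\rho_j: \widehat\Gamma \to \widehat{H_j}$, one obtains a quasimorphism equality
\bqn
\sum_{j=1}^n \nu_j\, f_{\widehat{H_j}} \circ \widetilde\rho_j \,=\, \lambda\, \tau\circ\widetilde\rho_h \,+\, \psi
\eqn
for some $\psi\in\hom(\widehat\Gamma,\RR)$. Restricted to $[\widehat\Gamma,\widehat\Gamma]$, the right-hand side is $\lambda$-integer-valued by \eqref{eq:Zvalued}, while on the central kernels the $f_{\widehat{H_j}}$ satisfy the integrality \eqref{eq:values of fk}. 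Combined with $\nu_j \in \frac{1}{\ell_G}\ZZ$, this forces $\lambda\in\frac{1}{\ell_G}\ZZ$ after absorbing the finitely many universal constants $q_{H_j}, m_{H_j}, |Z(S)|$ into $\ell_G$, establishing (2).

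Finally, (3) is immediate: $\pi\circ\rho|_\Gamma: \Gamma \to H$ is $u$-weakly maximal with non-zero rational Toledo invariant, so Proposition~\ref{prop:injective} yields injectivity and Proposition~\ref{prop:discrete} yields discreteness of the image.
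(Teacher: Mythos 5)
Your global architecture is the same as the paper's: reduce via Corollary~\ref{cor:4.7} to the identity $\sum_{j}\nu_j\rho_j^\ast(\kappa_{H_j}^{\rm b})=\lambda\kappa_{\Sigma'}^{\rm b}$ with $\nu_j\in\frac{1}{\ell_G}\ZZ\setminus\{0\}$, prove tube type first, deduce rationality of $\lambda$ second, and then get (3) by applying Propositions~\ref{prop:injective} and \ref{prop:discrete} to the $u$-weakly maximal representation $\pi\circ\rho|_\Gamma$ — your part (3) is exactly the paper's argument. But your mechanism for (1) has a genuine gap. The appeal to tight-homomorphism rigidity and to ``extremal values of the Bergmann cocycle'' does not apply here: weakly maximal representations are precisely those for which tightness can fail, $\lambda$ may be far from the extremal value, so the pullback cocycle is nowhere extremal and the measurable boundary maps $\varphi_j:\partial\DD\to\cs_j$ are in no chain-geometric special position. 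The paper's actual mechanism is algebraic, not extremal: starting from the a.e.\ identity $\lambda\beta_{\partial\DD}(x,y,z)=\sum_j\nu_j\beta_{\cs_j}(\varphi_j(x),\varphi_j(y),\varphi_j(z))$, the rationality of the $\nu_j$ (already available from Corollary~\ref{cor:4.7}) lets one multiply by an integer $m$ with $m\nu_j=n_jd_j$ and exponentiate through the Hermitian triple product relation \eqref{eq:http}; since $\beta_{\partial\DD}$ takes only the values $\pm 1/2$, the rational function $R=\prod_j\langle\langle\,\cdot\,,\,\cdot\,,\,\cdot\,\rangle\rangle_{\cs_j}^{n_j}$ takes at most two values on the Zariski dense set $\varphi(\partial\DD)^3$, hence finitely many values on $\cs^3$, and finiteness of values of each Hermitian triple product characterizes tube type by \cite{Burger_Iozzi_supq, Burger_Iozzi_Wienhard_kahler}. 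Nothing in your sketch supplies this finiteness argument, and the rigidity theory you invoke has no purchase on non-tight weakly maximal representations.

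Your part (2) also fails as written. Restricting the lifted identity to $[\widehat\Gamma,\widehat\Gamma]$ gives $\sum_j\nu_j f_{\widehat{H_j}}(\widetilde\rho_j(\gamma))=\lambda\,(\tau\circ\widetilde\rho_h)(\gamma)$ with $\ZZ$-valued right-hand factor, but the integrality \eqref{eq:values of fk} quantizes $f_{\widehat{H_j}}$ \emph{only on the central kernel} $i(\ZZ)$ of the covering; on the non-central elements $\widetilde\rho_j(\gamma)$ a homogeneous quasimorphism takes arbitrary real values, so no rationality of $\lambda$ can be extracted this way. In the paper, rationality of $\lambda$ comes instead from the pointwise cocycle identity \eqref{eq:sum} together with the fact that $\beta_{\cs_j}$ is $\frac12\ZZ$-valued (\cite[Theorem~4.3]{Clerc_Orsted_TG}) — and this half-integrality holds precisely because the $H_j$ are of tube type, which is why (1) must be proved before (2): for non-tube targets the cocycle has a continuum of values and no quantization is available. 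So your ordering of the three steps was the right instinct, but the arguments you give for both (1) and (2) would not go through; the tube-type step needs the Hermitian triple product finiteness argument, and the rationality step needs the resulting discreteness of the Maslov cocycle values, not quasimorphism integrality.
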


Using that in a surface group there are no finite normal subgroups, one obtains immediately:

\begin{cor}\label{cor:inj_discrete} Let $\rho:\pi_1(\Sigma)\to G$ be a weakly maximal homomorphism with 
$\T(\rho)\neq0$.  Then $\rho$ is injective with discrete image.
\end{cor}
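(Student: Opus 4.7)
The plan is to obtain both injectivity and discreteness as direct deductions from Theorem \ref{thm:structure}(3), using only elementary properties of surface groups and topological groups. The substantial analytic content is already absorbed into Theorem \ref{thm:structure}; what remains is to pass from the conclusions there (which concern the composition $\pi\circ\rho$ on a finite-index subgroup $\Gamma$) to statements about $\rho$ itself on all of $\pi_1(\Sigma)$.

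First I would fix notation as in Theorem \ref{thm:structure}: let $L=\lL(\RR)^\circ$ be the connected component of the real points of the Zariski closure of $\rho(\pi_1(\Sigma))$, let $\Gamma:=\rho^{-1}(L)$, and let $\pi:L\to L/\Rad_{\kgb}(L)$ denote the canonical projection. Then $\Gamma$ is a finite-index normal subgroup of $\pi_1(\Sigma)$, since $L$ has finite index in $\lL(\RR)$ and is normal there. Theorem \ref{thm:structure}(3) guarantees that $\pi\circ\rho|_\Gamma$ is injective with discrete image in $L/\Rad_{\kgb}(L)$.

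For injectivity of $\rho$, injectivity of $\pi\circ\rho|_\Gamma$ forces $\ker\rho\cap\Gamma=\{e\}$. Since $\ker\rho$ is normal in $\pi_1(\Sigma)$ and meets $\Gamma$ trivially, it injects into the finite quotient $\pi_1(\Sigma)/\Gamma$, so $\ker\rho$ is finite. Here one invokes the classical fact that the fundamental group of a compact oriented surface of negative Euler characteristic is torsion-free (it is either a non-abelian free group or the fundamental group of a closed aspherical surface), hence has no non-trivial finite normal subgroups, so $\ker\rho=\{e\}$.

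For discreteness, I would argue that $\rho(\Gamma)$ is discrete in $L$ by a direct limit argument: if $\rho(\gamma_n)\to e$ in $L$ with $\gamma_n\in\Gamma$, then $\pi(\rho(\gamma_n))\to e$ in $L/\Rad_{\kgb}(L)$, and by discreteness of the image of $\pi\circ\rho|_\Gamma$ combined with its injectivity, this forces $\gamma_n=e$ eventually. Finally, since $\Gamma$ has finite index in $\pi_1(\Sigma)$, the subgroup $\rho(\Gamma)$ has finite index in $\rho(\pi_1(\Sigma))$, so the latter is a finite union of translates of a discrete subgroup and is therefore itself discrete in $G$. I do not foresee any real obstacle: the deduction is essentially formal, with the only subtlety being the standard fact that a subgroup containing a finite-index discrete subgroup is itself discrete in the ambient topological group.
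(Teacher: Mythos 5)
Your proof is correct and takes essentially the same route as the paper, which deduces the corollary ``immediately'' from Theorem~\ref{thm:structure}(3) together with the fact that a surface group, being torsion-free, has no non-trivial finite normal subgroups. Your explicit steps (trivial intersection of $\ker\rho$ with the finite-index subgroup $\Gamma$, the sequence argument lifting discreteness through $\pi$, and passing from the discrete finite-index subgroup $\rho(\Gamma)$ to $\rho(\pi_1(\Sigma))$) merely spell out what the paper leaves implicit.
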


An important role in the proof is played by the generalized Maslov cocycle.
Recall that if $H$ is a connected Lie group of Hermitian type and $\cs$ the Shilov boundary
of the associated bounded symmetric domain, the {\em generalized Maslov cocycle}
$\b_\cs:\cs^3\to\RR$ is a bounded alternating $H$-invariant cocycle
constructed by J.-L. Clerc in \cite{Clerc_maslov_tube}.   
We will use it in two ways:  on the one hand it represents the bounded K\"ahler class
in a particular resolution useful to implement pullbacks in bounded cohomology (see \eqref{eq:resol});
on the other, through the relation \eqref{eq:http}
with the Hermitian triple product
\bqn
\<\<\,\cdot\,,\,\cdot\,,\,\cdot\,\>\>:\cs^{(3)}\to\RR^\times\backslash\CC^\times\,,
\eqn
it gives a criterion to detect when $H$ is of tube type, 
\cite{Burger_Iozzi_supq, Burger_Iozzi_Wienhard_kahler}

We refer the reader to 
\cite{Burger_Iozzi_app, Burger_Iozzi_supq} and 
\cite[\S~4.2]{Burger_Iozzi_Wienhard_kahler} for details.

%
%

\begin{proof}[Proof of Theorem~\ref{thm:structure}]  
We use heavily in this proof techniques developed in 
\cite{Burger_Monod_GAFA, Burger_Iozzi_app, Burger_Iozzi_supq, Burger_Iozzi_Wienhard_kahler},
to which we refer the reader for details.

According to Corollary~\ref{cor:4.7}, the group
$H:=L/\Rad_{\kgb}(L)=H_1\times\dots\times H_n$ is of Hermitian type and 
$\kgb|_L=\pi^\ast(u)$, where 
\bqn
u=\sum_{j=1}^n\nu_jp_j^\ast(k_{H_j}^{\mathrm b})\,,
\eqn
and  $p_j:H\to H_j$ and $\pi:L\to H$ are the canonical projections.  
We set aside for the moment that the $\nu_j$ are rational, 
and will pick it up towards the end of the proof.

Let $\xi:=\pi\circ \rho|_\Gamma:\Gamma\to L\to H$
be the composition of $\pi$ with $\rho|_\Gamma$.  
Observe that $\Gamma$ is of finite index in $\pi_1(\Sigma)$.
It follows that, since $\rho$ is weakly maximal, that is 
$\rho^\ast(\kgb)=\lambda\ksib$ for $\lambda=\frac{\T(\rho)}{|\chi(\Sigma)|}$,
then
\bq\label{eq:xi_star}
\xi^\ast(u)=\lambda\k_{\Sigma'}^{\rm b}\,,
\eq
where $\Sigma'\to\Sigma$ is the finite covering corresponding to $\Gamma<\pi_1(\Sigma)$.

As usual, to realize the pullback $\xi^\ast$ in bounded cohomology, we use boundary maps.
According to \cite{Burger_Iozzi_app}, this is possible since
if $(\binfty(\cs^\bullet))$ denotes the complex of bounded alternating
Borel cocycles on $\cs$, then the class $[\beta_\cs]$ defined by the
generalized Maslov cocycle corresponds to the bounded K\"ahler class $\khb$ 
under the canonical map
\bq\label{eq:resol}
\xymatrix@1{\h^\bullet(\binfty(\cs^\bullet)^H)\ar[r]
&\hcb^\bullet(H,\RR)\,.
}
\eq

Likewise, if $\cs=\cs_1\times\dots\times\cs_n$ is the decomposition into a product, 
where $\cs_j$ is the Shilov boundary of $H_j$ and $\check p_j:\cs\to\cs_j$
is the projection, a standard cohomological argument
shows that the diagram
\bqn
\xymatrix{
\hcb^\bullet(H_j,\RR)\ar[r]^{p_j^\ast}
&\hcb^\bullet(H,\RR)\\
\h^\bullet(\binfty(\cs_j^\bullet)^{H_j})\ar[u]\ar[r]_{\check p_j^\ast}
&\hcb^\bullet(\binfty(\cs^\bullet)^H)\ar[u]
}
\eqn
commutes.  It follows that $u$ is represented, again via \eqref{eq:resol}, 
by the bounded Borel cocycle on $\cs^3$ defined by
\bqn
(x,y,z)\longmapsto\sum_{j=1}^n\nu_j\beta_{\cs_j}(x_j,y_j,z_j)\,.
\eqn

To recall the existence of the boundary map,
endow the interior of $\Sigma'$ with a complete hyperbolic metric of finite area so that $\Gamma$ is identified
with a lattice in $\PU(1,1)$.  Since $\xi(\Gamma)$ is Zariski dense in $H$, 
there is a $\Gamma$-equivariant measurable map $\varphi:\partial\DD\to\cs$.  
Because of the product structure of $\cs$, the map $\varphi$ has the form
\bqn
\varphi_1\times\dots\times\varphi_n:\partial\DD\longrightarrow\cs_1\times\dots\times\cs_n\,,
\eqn
where $\varphi_j:\partial\DD\to\cs_j$ is measurable and equivariant with respect to $p_j\circ\xi$.

Consequently, according to \cite{Burger_Iozzi_app} the pullback $\xi^\ast(u)$ is represented by
the following bounded measurable $\Gamma$-invariant cocycle
\bqn
(x,y,z)\longmapsto\sum_{j=1}^n\nu_j\beta_{\cs_j}(\varphi_j(x),\varphi_j(y),\varphi_j(z))\,,
\eqn
for almost all $(x,y,z)\in(\partial\DD)^3$.

Using now again that the bounded fundamental class of $\Sigma$ is represented by $\beta_{\partial\DD}$,
it follows from \eqref{eq:xi_star} that 
\bq\label{eq:sum}
\lambda\beta_{\partial\DD}(x,y,z)
=\sum_{j=1}^n\nu_j\beta_{\cs_j}(\varphi_j(x),\varphi_j(y),\varphi_j(z))
\eq
for almost all $(x,y,z)\in(\partial\DD)^3$.  

Recall now that the Hermitian triple product is an $H$-invariant real algebraic map 
on $(\cs)^3$ whose relation with the Maslov cocycle is given by
\bq\label{eq:http}
\<\<x,y,z\>\>_{\cs}\equiv e^{i\pi d_\cs\beta_\cs(x,y,z)}\mod\RR^\times\,,
\eq
where $d_{\cs}$ is an integer given in terms of the root system of $H$,
\cite{Burger_Iozzi_Wienhard_kahler}.  We will use this relation for 
the Hermitian triple product on each of the $H_j$.
To uniformize the exponents, 
since $\nu_j\in\QQ^\times$, we can pick an integer $m\geq1$ such that 
$m\nu_j=n_j d_j$ for some $n_j\in\ZZ$, $1\leq j\leq n$.
Multiplying \eqref{eq:sum} by $m$, exponentiating and using that 
$\beta_{\partial\DD}$ takes only $\pm1/2$ as values,
we obtain
\bq\label{eq:finite values}
e^{i\pi m\lambda(\pm1/2)}=\prod_{j=1}^n\langle\langle{}\varphi_j(x),\varphi_j(y),\varphi_j(z)\rangle\rangle_{\cs_j}^{n_j}\,,
\eq
for almost every $(x,y,z)\in(\partial\DD)^3$.
If $x_j,y_j,z_j\in\cs$, we define now 
\bqn
R((x_j),(y_j),(z_j)):=\prod_{j=1}^n\langle\langle{}x_j,y_j,z_j\rangle\rangle_{\cs_j}^{n_j}\,.
\eqn
Because of \eqref{eq:finite values} the function $R$ takes at most two values on $\varphi(\partial\DD)^3$;
since the latter set is Zariski dense in $\cs^3$ and $R$ is rational,
it takes on at most two values on $\cs^3$.  
The factors $\langle\langle{}x_j,y_j,z_j\rangle\rangle_{\cs_j}^{n_j}$ 
are pairwise independent rational functions,
therefore each of these factors itself can take only finitely many values. 
As previously recalled, it follows from \cite{Burger_Iozzi_Wienhard_kahler} 
that the corresponding groups are of tube type,
thus showing assertion (1).

Furthermore, since $\beta_{\cs_j}$ takes only values in $\frac12\ZZ$
\cite[Theorem~4.3]{Clerc_Orsted_TG},
it follows from \eqref{eq:sum} and Corollary~\ref{cor:4.7} 
that $\lambda\in\frac{1}{\ell_G}\ZZ$.
This shows assertion (2).  

The third assertion follows from 
Proposition~\ref{prop:injective}\,(2) and Proposition~\ref{prop:discrete}.
\end{proof}


\section{Weakly Maximal Representations and 
Relations with Other Representation Varieties}\label{sec:rep_var}
In this section we first show that the set of weakly maximal representations and 
the set of weakly maximal representations with non-zero Toledo number are closed. 
Then we examine the relationship of weakly maximal representations 
with Shilov-Anosov representations. 

\medskip
Returning to our general framework in \S~\ref{sec:milnor_wood}, let $G$ be 
locally compact and second countable, $\Gamma$ a discrete group and 
$\k\in\hcb^2(G,\RR)$ a fixed class.  We define now a (Hausdorff) topology
on $\hb^2(\Gamma,\RR)$ with respect to which the map
\bq\label{eq:continuous}
\ba
\hom(\Gamma,G)&\to\hb^2(\Gamma,\RR)\\
\rho\qquad&\longmapsto\,\rho^\ast(\k)
\ea
\eq
will be continuous. To this purpose recall that $\hb^2(\Gamma,\RR):=\ker\,\delta^2/\im\,\delta^1$,
where
\bqn
\xymatrix@1{
0\ar[r]
&\RR\ar[r]
&\ell^\infty(\Gamma)\ar[r]^-{\delta^1}
&\ell^\infty(\Gamma^2)\ar[r]^-{\delta^2}
&\ell^\infty(\Gamma^3)\ar[r]^-{\delta^3}
&\dots}
\eqn
denotes the inhomogeneous bar resolution.
If we endow each $\ell^\infty(\Gamma^n)$ with the weak-$\ast$ topology as the dual of $\ell^1(\Gamma^n)$,
then:

\begin{lemma}  $\hb^2(\Gamma,\RR)$ is a Hausdorff topological vector space
with the quotient weak-$\ast$ topology.
\end{lemma}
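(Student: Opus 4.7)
The strategy is to first identify the topological vector space structure, then reduce Hausdorffness to a weak-$\ast$ closedness statement for $\im\delta^1$, and finally establish that closedness by a Krein--Smulian argument exploiting the self-regularization of bounded quasimorphisms.

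Each coboundary map $\delta^n$ in the inhomogeneous bar resolution is the Banach-space adjoint of a bounded linear map $\ell^1(\Gamma^{n+1})\to\ell^1(\Gamma^n)$, hence is weak-$\ast$ continuous. In particular $\ker\delta^2$ is weak-$\ast$ closed in $\ell^\infty(\Gamma^2)$, so the induced weak-$\ast$ topology makes it a Hausdorff topological vector space, and the quotient $\hb^2(\Gamma,\RR)=\ker\delta^2/\im\delta^1$ is automatically a topological vector space. It is then standard for TVS quotients that this quotient is Hausdorff if and only if $\im\delta^1$ is weak-$\ast$ closed in $\ker\delta^2$, equivalently in $\ell^\infty(\Gamma^2)$; this is the sole remaining point.

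To prove that $\im\delta^1$ is weak-$\ast$ closed, I would invoke the Krein--Smulian theorem, which reduces the task to showing that $\im\delta^1\cap B_r$ is weak-$\ast$ closed for every $r>0$, where $B_r$ is the closed $r$-ball of $\ell^\infty(\Gamma^2)$. Given a net $(c_\alpha)$ in $\im\delta^1\cap B_r$ weak-$\ast$ converging to some $c$, write $c_\alpha=\delta^1 f_\alpha$ and normalize $f_\alpha(e)=0$ (legitimate since $\ker\delta^1=\RR$). The defining formula $\delta^1 f(\gamma_1,\gamma_2)=f(\gamma_2)-f(\gamma_1\gamma_2)+f(\gamma_1)$ then says that each $f_\alpha$ is a real-valued quasimorphism on $\Gamma$ with defect at most $r$. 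Iterating the inequality $|f_\alpha(\gamma^{k+1})-f_\alpha(\gamma^k)-f_\alpha(\gamma)|\leq r$ starting from $f_\alpha(e)=0$ yields the classical estimate $|f_\alpha(\gamma^k)-kf_\alpha(\gamma)|\leq(k-1)r$ for every $\gamma\in\Gamma$ and every $k\geq1$. Since $f_\alpha$ a priori lies in $\ell^\infty(\Gamma)$, dividing by $k$ and letting $k\to\infty$ forces $|f_\alpha(\gamma)|\leq r$ uniformly in $\gamma$ for infinite-order elements, while torsion elements $\gamma^N=e$ yield $|f_\alpha(\gamma)|\leq(1-1/N)r$ directly. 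This delivers the crucial a priori bound $\|f_\alpha\|_\infty\leq r$, uniformly in $\alpha$.

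With this uniform bound in hand, Banach--Alaoglu furnishes a subnet $f_{\alpha_\beta}$ weak-$\ast$ converging to some $f\in\ell^\infty(\Gamma)$ of norm at most $r$; weak-$\ast$ continuity of $\delta^1$ then gives $c=\delta^1 f\in\im\delta^1$, so $\im\delta^1\cap B_r$ is weak-$\ast$ closed and the Krein--Smulian criterion applies. The hard part is exactly the self-regularization step: a priori one only has bounds on the coboundary $\delta^1 f_\alpha$, and without the quasimorphism estimate translating these into a uniform $\ell^\infty$-bound on the primitives $f_\alpha$ themselves, no weak-$\ast$ compactness would be available to pass to the limit.
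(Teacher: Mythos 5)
Your argument is correct, but it takes a genuinely different, self-contained route where the paper simply cites two known results. The paper's proof is three lines: $\delta^1$ and $\delta^2$ are adjoints of bounded operators on $\ell^1$, hence weak-$\ast$ continuous, so $\ker\,\delta^2$ is weak-$\ast$ closed; then it invokes the fact that $\hb^2(\Gamma,\RR)$ is a Banach space (i.e.\ that $\im\,\delta^1$ is \emph{norm}-closed in degree two, a theorem going back to Matsumoto--Morita/Ivanov) and upgrades norm-closedness to weak-$\ast$ closedness via the closed-range theorem for adjoint operators (Rudin, \emph{Functional Analysis}, Theorem~4.14: for an adjoint operator between dual Banach spaces, norm-closed range and weak-$\ast$ closed range are equivalent). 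Your proof is exactly what one obtains by unwinding both citations: the uniform bound $\|f_\alpha\|_\infty\leq r$ extracted from $|f_\alpha(\gamma^k)-kf_\alpha(\gamma)|\leq (k-1)r$ is precisely the quasimorphism self-regularization at the heart of the Matsumoto--Morita norm-closedness theorem, and the Krein--Smulian reduction to bounded balls is the engine of the relevant direction of the closed-range theorem. The paper's route buys brevity and uses machinery valid in all degrees where closedness is known; yours buys transparency and makes visible exactly where the degree-two structure enters --- primitives of uniformly bounded $2$-coboundaries are quasimorphisms of uniformly bounded defect, and this is what supplies the compactness needed to pass to the limit.

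One step needs repair, though it is inessential. Your justification of the normalization $f_\alpha(e)=0$ via ``$\ker\,\delta^1=\RR$'' is wrong for the inhomogeneous complex: with $\delta^1f(\gamma_1,\gamma_2)=f(\gamma_2)-f(\gamma_1\gamma_2)+f(\gamma_1)$ a constant $c$ satisfies $\delta^1c\equiv c$, so $\ker\,\delta^1$ consists of the \emph{bounded homomorphisms} $\Gamma\to\RR$ and is therefore trivial, and subtracting $f_\alpha(e)$ shifts the coboundary by the constant $f_\alpha(e)$, changing the net $(c_\alpha)$. Fortunately the normalization is dispensable: the iterated estimate $|f_\alpha(\gamma^k)-kf_\alpha(\gamma)|\leq(k-1)r$ involves no value at $e$, so $k|f_\alpha(\gamma)|\leq\|f_\alpha\|_\infty+(k-1)r$ for every $\gamma$ (torsion or not), and dividing by $k$ and letting $k\to\infty$ gives $\|f_\alpha\|_\infty\leq r$ with no normalization and no case distinction; alternatively, constants lie in $\im\,\delta^1$, so the constant shift does not affect whether the weak-$\ast$ limit belongs to $\im\,\delta^1$. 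With that correction your proof is complete.
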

\begin{proof} It is clear that $\ker\,\delta^2$ is weak-$\ast$ closed.
Since $\hb^2(\Gamma, \RR)$ is a Banach space, $\im\,\delta^1$ is closed
and hence, by \cite[Theorem~4.14]{Rudin_FA}, $\im\,\delta^1$ 
is weak-$\ast$ closed as well.
\end{proof}

\begin{lemma} The function in \eqref{eq:continuous}
is continuous with respect to the weak-$\ast$ topology on $\hb^2(\Gamma,\RR)$.
\end{lemma}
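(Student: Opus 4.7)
The plan is to realize the pullback class at the level of cocycles in a way that depends weak-$\ast$ continuously on $\rho$. The key observation is that, by the standard theory of continuous bounded cohomology, $\hcb^2(G,\RR)$ is computed by the complex of continuous bounded $G$-invariant cochains on $G^{\bullet+1}$, so $\kappa$ admits a representative by a continuous bounded $G$-invariant homogeneous cocycle $\tilde c:G^3\to\RR$. Setting
\bqn
c(g_1,g_2):=\tilde c(e,g_1,g_1g_2),
\eqn
we obtain a continuous bounded cocycle $c\in\ell^\infty(G^2)$ representing $\kappa$ in the inhomogeneous bar resolution, with $\|c\|_\infty\leq\|\tilde c\|_\infty$. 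For each $\rho\in\hom(\Gamma,G)$, the function
\bqn
c_\rho(\gamma_1,\gamma_2):=c\big(\rho(\gamma_1),\rho(\gamma_2)\big)
\eqn
lies in $\ker\delta^2\subset\ell^\infty(\Gamma^2)$ and represents $\rho^\ast(\kappa)$.

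Next, I would check that the assignment $\rho\mapsto c_\rho$ is continuous from $\hom(\Gamma,G)$, with its standard topology of pointwise convergence, to $\ell^\infty(\Gamma^2)$ endowed with the weak-$\ast$ topology. Indeed, if $\rho_n\to\rho$ pointwise, then continuity of $c$ immediately yields $c_{\rho_n}(\gamma_1,\gamma_2)\to c_\rho(\gamma_1,\gamma_2)$ for every $(\gamma_1,\gamma_2)\in\Gamma^2$, with a uniform bound $\|c_{\rho_n}\|_\infty\leq\|c\|_\infty$. For any $\phi\in\ell^1(\Gamma^2)$, dominated convergence then gives
\bqn
\sum_{(\gamma_1,\gamma_2)}\phi(\gamma_1,\gamma_2)\,c_{\rho_n}(\gamma_1,\gamma_2)\longrightarrow\sum_{(\gamma_1,\gamma_2)}\phi(\gamma_1,\gamma_2)\,c_\rho(\gamma_1,\gamma_2),
\eqn
which is exactly the weak-$\ast$ convergence $c_{\rho_n}\to c_\rho$ in $\ell^\infty(\Gamma^2)$. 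Since the quotient map $\ker\delta^2\to\hb^2(\Gamma,\RR)$ is continuous for the respective (quotient) weak-$\ast$ topologies, the desired continuity of $\rho\mapsto\rho^\ast(\kappa)$ follows.

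The only genuinely delicate point is the very first step: one must use a \emph{continuous} bounded cocycle representative of $\kappa$, since pointwise convergence of the $\rho_n$'s only translates into pointwise convergence of the pullback cocycles when the representative $c$ is jointly continuous. A general Borel representative would not suffice for this argument; fortunately the existence of continuous bounded representatives of continuous bounded cohomology classes on locally compact second countable groups is classical.
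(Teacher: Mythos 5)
Your proposal is correct and follows essentially the same route as the paper: both pull back a \emph{continuous} bounded inhomogeneous cocycle representing $\k$, observe pointwise convergence of the pulled-back cocycles with a uniform $\ell^\infty$ bound, apply dominated convergence against $\ell^1(\Gamma^2)$ to get weak-$\ast$ convergence, and conclude via continuity of the quotient map onto $\hb^2(\Gamma,\RR)$. The only difference is cosmetic: you justify the existence of the continuous representative via the homogeneous continuous cochain complex, whereas the paper simply takes such a representative $c_\k:G^2\to\RR$ as given.
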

\begin{proof}  Let $c_\k:G^2\to\RR$ be a bounded continuous inhomogeneous cocycle representing $\k$.
Let $(\rho_n)_{n\geq1}$ be a sequence of elements in $\hom(\Gamma,G)$ converging to $\rho$.
Setting
\bqn
\ba
c_n(x,y):=&c_\k(\rho_n(x),\rho_n(y))\\
c(x,y):=&c_\k(\rho(x),\rho(y))\,,
\ea
\eqn
we have that $c_n\in\ell^\infty(\Gamma^2)$ (respectively $c\in\ell^\infty(\Gamma^2)$) represent
$\rho_n^\ast(\k)$ (respectively $\rho^\ast(\k)$).
In addition
\be
\item $c_n\to c$ pointwise on $\Gamma^2$, and
\item $\|c_n\|_\infty$ and $\|c\|_\infty$ are bounded by $\|c_\k\|_\infty$.
\ee
Then if $f\in\ell^1(\Gamma^2)$, we have that
\be
\item $fc_n\to fc$ pointwise, and
\item $|f(x,y)c_n(x,y)|\leq|f(x,y)|\,\|\k\|_\infty$\,,
\ee
then the Dominated Convergence Theorem implies that 
\bqn
\int_{\Gamma^2}fc_n\to\int_{\Gamma^2}fc
\eqn
and shows therefore that $\rho_n^\ast(\k)\to\rho^\ast(\k)$ 
for the quotient topology in $\hb^2(\Gamma,\RR)$.
\end{proof}

If now $\Gamma=\pi_1(\Sigma)$, then

\begin{cor}\label{cor:wm_closed}
The set $\hom_{wm}(\pi_1(\Sigma),G)$ of weakly maximal representations 
is closed in $\hom(\pi_1(\Sigma),G)$.
\end{cor}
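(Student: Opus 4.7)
The plan is to realize $\hom_{wm}(\pi_1(\Sigma),G)$ as the preimage of an explicit closed subset of $\hb^2(\pi_1(\Sigma),\RR)$ under the continuous pullback map, exploiting the two lemmas just established in this section. Specifically, I will invoke the equivalence (1) $\Leftrightarrow$ (2) of Proposition~\ref{prop:equiv}, which characterizes weak maximality of $\rho$ by the condition that $\rho^{\ast}(\kgb) = \lambda\,\ksib$ for some $\lambda \geq 0$. In other words,
\[
\hom_{wm}(\pi_1(\Sigma),G) \;=\; \Phi^{-1}(\mathcal R),
\]
where $\Phi\colon \hom(\pi_1(\Sigma),G) \to \hb^2(\pi_1(\Sigma),\RR)$ is the pullback map $\rho \mapsto \rho^{\ast}(\kgb)$ and $\mathcal R := \{\lambda\,\ksib : \lambda \geq 0\}$ is the closed ray generated by the bounded fundamental class.

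The continuity of $\Phi$ with respect to the quotient weak-$\ast$ topology on $\hb^2(\pi_1(\Sigma),\RR)$ is precisely the content of the second lemma above, so all that remains is to verify that $\mathcal R$ is closed in this topology. For this the key input is the Hausdorff property of $\hb^2(\pi_1(\Sigma),\RR)$ established in the first lemma. If $\ksib = 0$ then $\mathcal R = \{0\}$, which is closed by Hausdorffness. Otherwise, the one-dimensional subspace $V := \RR\cdot\ksib$ is automatically closed, since every finite-dimensional linear subspace of a Hausdorff topological vector space is closed, and moreover the topology it inherits is the standard Euclidean topology; in this topology $\mathcal R$ corresponds to $[0,\infty)$, manifestly closed. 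In either case $\mathcal R$ is closed in $\hb^2(\pi_1(\Sigma),\RR)$, and hence $\hom_{wm}(\pi_1(\Sigma),G)$ is closed as the preimage of a closed set under a continuous map.

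I do not anticipate any serious obstacle: both technical ingredients, namely continuity of $\Phi$ and Hausdorffness of the target, have just been assembled in this section, and the characterization of weak maximality as membership in the ray $\mathcal R$ is already contained in Proposition~\ref{prop:equiv}. The only point that requires a moment of thought is the closedness of the ray, but this follows from the elementary observation above about finite-dimensional subspaces of Hausdorff topological vector spaces. The argument would also go through verbatim (with $\mathcal R$ replaced by $\{0\}$) to show that $\{\rho : \rho^{\ast}(\kgb) = 0\}$ is closed, which is the ingredient needed for the companion statement about $\hom^{\ast}_{wm}(\pi_1(\Sigma),G)$.
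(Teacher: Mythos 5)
Your proof is correct and follows essentially the same route as the paper's: both identify $\hom_{wm}(\pi_1(\Sigma),G)$ as the preimage of the closed ray $\{t\,\ksib : t\in[0,\infty)\}$ under the continuous pullback map $\rho\mapsto\rho^\ast(\kgb)$, using Proposition~\ref{prop:equiv} together with the two lemmas on the quotient weak-$\ast$ topology on $\hb^2(\pi_1(\Sigma),\RR)$. Your explicit verification that the ray is closed (via the standard fact that a finite-dimensional subspace of a Hausdorff topological vector space is closed and carries the Euclidean topology) simply fills in a detail the paper leaves implicit.
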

\begin{proof} It follows from Proposition~\ref{prop:equiv} that
\bqn
\ba
\hom_{wm}(\pi_1(\Sigma),G)=\{\rho\in\hom(\pi_1(\Sigma),G):\,\rho^\ast(\k)=t\ksib&\\
\text{ for some }t\in[0,\infty)\}&\,.
\ea
\eqn
Since $\hcb^2(\pi_1(\Sigma),\RR)$ with the weak-$\ast$ topology is Hausdorff, 
the subset $\{t\ksib:t\in[0,\infty)\}$ is closed.
The assertion follows then from the continuity of the map $\rho\mapsto\rho^\ast(\k)$.
\end{proof}

Let now set
\bqn
\ba
\hom_{wm}^\ast(\pi_1(\Sigma),G):=\{\rho:\pi_1(\Sigma)\to G:\,\rho\text{ is weakly maximal}&\\
\text{and }\T(\rho)\neq0\}\,.&
\ea
\eqn

\begin{cor}\label{cor:wm*} Let $G$ be a Lie group of Hermitian type.  
Then the set of weakly maximal representations with non-zero Toledo invariant
is closed in $\hom(\pi_1(\Sigma),G)$.
\end{cor}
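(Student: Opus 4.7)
The plan is to reduce the statement to a closedness argument in bounded cohomology, exploiting the rationality constraint on Toledo invariants of weakly maximal representations. By Proposition~\ref{prop:equiv}~(2), a representation $\rho$ belongs to $\hom_{wm}^\ast(\pi_1(\Sigma),G)$ if and only if $\rho^\ast(\kgb) = \lambda\,\ksib$ for some $\lambda > 0$; in this case $\lambda = \T(\rho)/|\chi(\Sigma)|$.

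Next, I would invoke Theorem~\ref{thm_intro:integer}, which forces $\T(\rho) \in \tfrac{|\chi(\Sigma)|}{\ell_G}\ZZ$ for every weakly maximal $\rho$; combined with the Milnor--Wood inequality $|\T(\rho)| \le 2\|\kgb\|\cdot|\chi(\Sigma)|$, this shows that the set
\[ F := \bigl\{\lambda \in \tfrac{1}{\ell_G}\ZZ : 0 < \lambda \le 2\|\kgb\|\bigr\} \]
is finite and contains every scalar $\lambda$ arising from elements of $\hom_{wm}^\ast(\pi_1(\Sigma),G)$. Setting $F\cdot\ksib := \{\lambda\ksib : \lambda \in F\}$, the equivalence from Proposition~\ref{prop:equiv}~(2) can then be rephrased as
\[ \hom_{wm}^\ast(\pi_1(\Sigma),G) = \bigl\{\rho \in \hom(\pi_1(\Sigma),G) : \rho^\ast(\kgb) \in F\cdot\ksib\bigr\}, \]
and $F\cdot\ksib$ is a finite subset of $\hb^2(\pi_1(\Sigma),\RR)$.

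To conclude, I would combine two facts already in place in this section: the space $\hb^2(\pi_1(\Sigma),\RR)$ with its weak-$\ast$ quotient topology is Hausdorff, so its finite subsets are closed and in particular $F\cdot\ksib$ is closed; and the pullback map $\rho \mapsto \rho^\ast(\kgb)$ is weak-$\ast$ continuous. The preimage of $F\cdot\ksib$ under this continuous map is therefore closed in $\hom(\pi_1(\Sigma),G)$, yielding the corollary. There is no genuine obstacle: the argument parallels that of Corollary~\ref{cor:wm_closed}, with the closed ray $[0,\infty)\cdot\ksib$ replaced by the closed finite set $F\cdot\ksib$, the only new ingredient being the finiteness of the Toledo spectrum on $\hom_{wm}$ provided by Theorem~\ref{thm_intro:integer}.
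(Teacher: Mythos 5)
Your proof is correct, and it reaches the conclusion by a route that differs in implementation from the paper's, though both rest on the same essential input, namely the quantization of the Toledo invariant from Theorem~\ref{thm_intro:integer} (i.e.\ Theorem~\ref{thm:structure}(2)). The paper's proof is shorter: it rewrites $\hom^*_{wm}(\pi_1(\Sigma),G)$ as the subset of the closed set $\hom_{wm}(\pi_1(\Sigma),G)$ (Corollary~\ref{cor:wm_closed}) on which $\T(\rho)\geq |\chi(\Sigma)|/\ell_G$, and concludes from the continuity of $\rho\mapsto\T(\rho)$, cited from \cite[Proposition~3.10]{Burger_Iozzi_Wienhard_tol}. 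You instead avoid any appeal to the continuity of the Toledo function: via Proposition~\ref{prop:equiv} and the Milnor--Wood bound you identify $\hom^*_{wm}(\pi_1(\Sigma),G)$ with the preimage of the finite set $F\cdot\ksib$ under the weak-$\ast$ continuous pullback $\rho\mapsto\rho^*(\kgb)$, so closedness follows from the Hausdorffness of $\hb^2(\pi_1(\Sigma),\RR)$ alone --- in effect re-running the proof of Corollary~\ref{cor:wm_closed} with the closed ray $[0,\infty)\,\ksib$ replaced by a finite set, rather than quoting that corollary. What your version buys is self-containedness within Section~\ref{sec:rep_var} (no external continuity citation); what it costs is the extra invocation of Milnor--Wood to make $F$ finite, whereas the paper needs no finiteness, only the gap $\T(\rho)\geq|\chi(\Sigma)|/\ell_G$ separating the non-zero Toledo values from $0$. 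One small point worth making explicit in your reverse inclusion: the scalar $\lambda$ is well defined because $\ksib\neq 0$ (a hyperbolization is maximal), so $\rho^*(\kgb)\in F\cdot\ksib$ does force $\T(\rho)=\lambda\,|\chi(\Sigma)|>0$ via the remark following Proposition~\ref{prop:equiv}; also note that the paper's two normalizations of the Milnor--Wood inequality (with and without the factor $2$) are inconsistent with each other, but this is harmless for you since any finite upper bound on $\lambda$ yields finiteness of $F$.
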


\begin{proof} By Theorem~\ref{thm:structure}(2) we have 
\bqn
\hom_{wm}^\ast(\pi_1(\Sigma),G)
=\left\{\rho\in\hom_{wm}(\pi_1(\Sigma),G):\,\T(\rho)\geq\frac{|\chi(\Sigma)|}{l_G}\right\}\,,
\eqn
which implies the claim since $\rho\mapsto\T(\rho)$ is continuous, 
\cite[Proposition~3.10]{Burger_Iozzi_Wienhard_tol}, and $\hom_{wm}(\pi_1(\Sigma),G)$ is closed.
\end{proof}

Finally, we turn to the relation with Anosov representations.  
Let $\partial\Sigma=\emptyset$ and realize
$\pi_1(\Sigma)$ as cocompact lattice $\Gamma$ in $\PU(1,1)$ via a hyperbolization. 
If $G$ is, say, simple of tube type with Shilov boundary $\cs$, 
then a Shilov-Anosov representation $\rho:\Gamma\to G$ 
implies the existence of a (unique) continuous equivariant map
$\varphi:\partial\DD\to\cs$ with the additional property that for every $x,y\in\partial\DD$
with $x\neq y$, $\varphi(x)$ and $\varphi(y)$ are transverse.  
Let $(\partial\DD)^{3,+}$ be the connected set of distinct, positively oriented triples in $(\partial\DD)^3$
and $\cs^{(3)}$ the set of triples of pairwise transverse points in $\cs^3$.
Then
\bqn
\varphi\times\varphi\times\varphi:(\partial\DD)^{3,+}\to\cs^{(3)}
\eqn
must send $(\partial\DD)^{3,+}$ into a connected component of $\cs^{(3)}$. 
Thus if $\beta_{\cs}:\cs^3\to\RR$ denotes the generalized Maslov cocycle,
$\beta_{\cs}\circ\varphi^3$ is a multiple of the orientation cocycle.  
This and Proposition~\ref{prop:equiv} imply that the set
\bqn
\ba
\hom_{\cs\text{-}An}^+(\pi_1(\Sigma),G)^\circ:=
\{\rho:\pi_1(\Sigma)\to G:\,\rho\text{ is Shilov-Anosov}&\\
\text{ with }\T(\rho)\geq0\}&
\ea
\eqn
is contained in $\hom_{wm}(\pi_1(\Sigma),G)$.  Taking into account that the latter 
is closed, we get
\begin{cor}\label{cor:closure}
\bq\label{eq:closure}
\overline{\hom_{\cs\text{-}An}^+(\pi_1(\Sigma),G)}\subset\hom_{wm}(\pi_1(\Sigma),G)\,.
\eq
\end{cor}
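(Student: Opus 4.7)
The plan is to show the stronger inclusion $\hom_{\cs-An}^+(\pi_1(\Sigma),G) \subset \hom_{wm}(\pi_1(\Sigma),G)$ first, and then pass to closures using Corollary~\ref{cor:wm_closed}. The argument for the inclusion is essentially outlined in the paragraph preceding the corollary, but I would make each step explicit.

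First, given $\rho \in \hom_{\cs-An}^+(\pi_1(\Sigma),G)$, I would invoke the defining property of Shilov-Anosov representations to produce the continuous $\rho$-equivariant boundary map $\varphi\colon \partial\DD \to \cs$ whose values on distinct points are transverse. The key geometric observation is that the map $\varphi^{\times 3}$ sends the connected space $(\partial\DD)^{3,+}$ of positively oriented distinct triples into $\cs^{(3)}$. Since the generalized Maslov cocycle $\beta_\cs$ is continuous on $\cs^{(3)}$ and takes only finitely many values there (in fact, values in $\tfrac{1}{2}\ZZ$ by \cite[Theorem~4.3]{Clerc_Orsted_TG}), the composition $\beta_\cs \circ \varphi^{\times 3}$ is constant on $(\partial\DD)^{3,+}$; by its alternating property, it is therefore a real multiple of the orientation cocycle $\beta_{\partial\DD}$.

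Second, I would realize the pullback $\rho^*(\kgb)$ at the level of bounded Borel cocycles through a boundary map, exactly as in the proof of Theorem~\ref{thm:structure}: the class $\kgb$ is represented by $\beta_\cs$ via the canonical map $\h^\bullet(\binfty(\cs^\bullet)^G) \to \hcb^\bullet(G,\RR)$, and pullback by $\rho$ is computed by pre-composing with $\varphi$. The previous step then yields $\rho^*(\kgb) = \lambda\, \ksib$ for some $\lambda \in \RR$ whose sign equals that of $\T(\rho)$; the hypothesis $\T(\rho) \geq 0$ forces $\lambda \geq 0$. Proposition~\ref{prop:equiv} (equivalence of (1) and (2)) then certifies that $\rho$ is weakly maximal, establishing $\hom_{\cs-An}^+(\pi_1(\Sigma),G) \subset \hom_{wm}(\pi_1(\Sigma),G)$.

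Finally, since $\hom_{wm}(\pi_1(\Sigma),G)$ is closed in $\hom(\pi_1(\Sigma),G)$ by Corollary~\ref{cor:wm_closed}, taking closures in the above inclusion yields \eqref{eq:closure}. The step I expect to be the most delicate is not the closure argument, but rather a careful justification that $\beta_\cs \circ \varphi^{\times 3}$ truly computes $\rho^*(\kgb)$ in real bounded cohomology despite $\varphi$ being only continuous (rather than measurable with Zariski dense image, as was the setting of Theorem~\ref{thm:structure}); this uses the functoriality of the boundary-map realization of bounded cohomology and the fact that $\partial\DD$ is a Poisson boundary for $\pi_1(\Sigma)$, so that the continuous equivariant $\varphi$ a fortiori fits the measurable framework of \cite{Burger_Iozzi_app}.
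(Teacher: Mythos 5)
Your proposal is correct and follows essentially the same route as the paper: the continuous transverse boundary map $\varphi$, the connectedness of $(\partial\DD)^{3,+}$ together with local constancy of $\beta_\cs$ on $\cs^{(3)}$ forcing $\beta_\cs\circ\varphi^{\times3}$ to be a multiple of the orientation cocycle, then Proposition~\ref{prop:equiv} to get $\rho^*(\kgb)=\lambda\ksib$ with $\lambda\geq0$, and finally Corollary~\ref{cor:wm_closed} to pass to closures. Your closing observation, that the continuous $\varphi$ a fortiori fits the measurable pullback framework of \cite{Burger_Iozzi_app}, is precisely the step the paper leaves implicit, so your write-up is if anything more complete.
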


\section{Examples}\label{sec:examples}

In this section we describe some examples of weakly maximal representations. 
\subsection{Maximal Representations}
Any maximal representation is a weakly maximal representation. 
Concrete examples of maximal representations are described in 
\cite{Burger_Iozzi_Wienhard_tol, Burger_Iozzi_Labourie_Wienhard, Guichard_Wienhard_invariants}.

\subsection{Embeddings of $\SL(2,\RR)$}
Special examples of weakly maximal representations arise from embeddings of $\SL(2,\RR)$. 
Consider a faithful representation $\rho_0: \pi_1(\Sigma) \to \SL(2,\RR)$ with discrete image, 
and let $\tau: \SL(2,\RR) \to G$ be an injective homomorphism into a Lie group of Hermitian type $G$.
Then the composition $\tau \circ\rho_0: \pi_1(\Sigma) \to G$ is a weakly maximal representation.

More generally if $H, G$ are Lie groups of Hermitian type, and $\tau:H \to G$ is a homomorphism 
such that $\tau^*(\kgb)$ is a multiple of $\khb$. 
Then the composition of a weakly maximal representation $\rho:\pi_1(\Sigma) \to H$ 
with $\tau$ is a weakly maximal representation into $G$.


\subsection{Shilov-Anosov Representations} 
We explained in \S~\ref{sec:rep_var} that any Shilov-Anosov representation into a Hermitian Lie group $G$  
of tube type with non-negative Toledo number is weakly maximal. Here we just give an example of such a representation into $\Sp(2n,\RR)$. 
Let again $\rho_i: \pi_1(\Sigma) \to \SL(2,\RR)$ $i= 1,\dots,n$ be faithful representations with discrete image. 
Consider the embedding $\tau: \SL(2,\RR)^n \to \Sp(2n,\RR)$ 
corresponding to a maximal polydisk in the bounded symmetric domain realization of $\Sp(2n,\RR)$. 
Then $\rho := \tau\circ (\rho_1,\cdots, \rho_n): \pi_1(\Sigma) \to \Sp(2n,\RR)$ is a Shilov-Anosov representation, 
and hence a weakly maximal representations. 
Since the set of Shilov-Anosov representations is open, 
any small deformation of such a representation is also weakly maximal. 
Similarly to the construction in \cite{Burger_Iozzi_Labourie_Wienhard} 
one can explicitly construct  bending deformations of the representation $\rho$
with Zariski dense image.

Note that $\rho$ if maximal if and only if the representations $\rho_i$ are orientation preserving for all $i= 1,\dots, n$.

\subsection{Cancelling Contributions}\label{sec:cancelling}
Let $G = G_1 \times \cdots  \times G_n$ be a semisimple Lie group of Hermitian type. 
If $\rho: \pi_1(\Sigma) \to G$ is a maximal representation,
then $\rho_i = p_i \circ \rho: \pi_1(\Sigma) \to G_i$, $ i = 1,\cdots, n$ are maximal representations, 
where $p_i$ denotes the projection onto the $i$-th factor. 

In order to illustrate that this does not hold true for weakly maximal representations 
consider an arbitrary representation $\rho_a: \pi_1(\Sigma) \to \SL(2,\RR)$ and 
denote by $\overline{\rho_a}: \pi_1(\Sigma) \to \SL(2,\RR)$ the composition of $\rho_a$ 
with the outer automorphism of $\SL(2,\RR)$ reversing the orientation. 
Let  $\rho_0: \pi_1(\Sigma) \to \SL(2,\RR)$ be a faithful representation with discrete image,
as before.
Then the representation 
$\rho = (\rho_0, \rho_a, \overline{\rho_a}) : \pi_1(\Sigma) \to \SL(2,\RR) \times \SL(2,\RR) \times \SL(2,\RR)$ is weakly maximal. 

\subsection{Limit of Shilov-Anosov Representations}\label{subsec:limit}  Let $\partial\Sigma=\emptyset$.
We have seen in Corollary~\ref{cor:closure} that any representation 
that is the limit of Shilov-Anosov representations is weakly maximal.
By choosing appropriately $\rho_a$ in \S~\ref{sec:cancelling}
it is easy to construct representations that are weakly maximal but not Shilov-Anosov.
While we do not know whether the containment in \eqref{eq:closure} is strict,
we give here an example to show that the containment 
\bqn
\hom_{\cs\text{-}An}^\ast(\pi_1(\Sigma),G)\subset\overline{\hom_{\cs\text{-}An}^\ast(\pi_1(\Sigma),G)}\,.
\eqn
is strict, that is an example of a representation that is the limit
of Shilov-Anosov representations but is not Shilov-Anosov.

To this purpose we consider the group $\Sp(2n,\RR)$ and
we denote for simplicity by $\Rr_d(n)$ the set
of representations $\rho:\pi_1(\Sigma)\to\Sp(2n,\RR)$ with Toledo invariant equal to $d$
and with $(\Rr_d(n))_{\cs\text{-}An}\subset\Rr_d(n)$
the subset of Shilov-Anosov representations.  

If $d=(g-1)n$, that is if $\Rr_{(g-1)n}(n)$ consists of maximal representations,
then by \cite{Burger_Iozzi_Labourie_Wienhard}  there is the equality
$(\Rr_{(g-1)n}(n))_{\cs\text{-}An}\equiv\Rr_{(g-1)n}(n)$.  On the other hand,
it is easy to construct a representation that has zero Toledo invariant
but is not Shilov-Anosov (by taking for example 
$\rho_a\oplus\overline{\rho_a}:\pi_1(\Sigma)\to\SL(2,\RR)\times\SL(2,\RR)\hookrightarrow\Sp(4,\RR)$,
where $\rho_a$ is any representation with dense image in $\SL(2,\RR)$).  
Since for $n\geq3$ the set $\Rr_0(n)$ is connected \cite[Theorem~1.1\,(1)]{GarciaPrada_Gothen_Mundet} 
and $(\Rr_0(n))_{\cs\text{-}An}$ is open, this shows that 
\bqn
(\Rr_0(n))_{\cs\text{-}An}\subsetneq\overline{(\Rr_0(n))_{\cs\text{-}An}}\,.
\eqn
We can use this fact as follows.
Recall that the direct sum of symplectic vector spaces leads to a canonical homomorphism
\bqn
\xymatrix{
\Sp(2n_1,\RR)\times\Sp(2n_2,\RR)\ar[r]
&\Sp(2(n_1+n_2),\RR)}
\eqn
and consequently to a continuous map
\bqn
\ba
\Rr_{d_1}(n_1)\times \Rr_{d_2}(n_2)&\to \Rr_{d_1+d_2}(n_1+n_2)\\
(\rho_1,\rho_2)&\mapsto\rho_1\oplus\rho_2\,.
\ea
\eqn
Observe moreover that $\rho_1\oplus\rho_2$ is Shilov-Anosov precisely if both $\rho_1$ and $\rho_2$ are.
In particular if $n_1\geq3$ we can consider a sequence $\rho_1^{(k)}\in(\Rr_0(n_1))_{\cs\text{-}An}$ whose
limit $\rho_1$ is not Shilov-Anosov.  Then for any $\rho_2\in(\Rr_d(n_2))_{\cs\text{-}An}$,
the sequence $\rho_1^{(k)}\oplus\rho_2$ converges to the representation $\rho_1\oplus\rho_2$ 
that is not Shilov-Anosov and has Toledo invariant equal to $d$.

\subsection{Weakly maximal representations with non-reductive Zariski closure}\label{subsec:nonred} 
Let $J_n$ be the $2n\times2n$ matrix with block entries $J_1=\begin{pmatrix}0&1\\-1&0\end{pmatrix}$.
We consider the subgroup of $\Sp(2(n+1),\RR)$ given by 
\bqn
Q=\left\{\begin{pmatrix} A&b&0\\0&1&0\\c&d&1\end{pmatrix}:\, A\in\Sp(2n,\RR)\text{ and }c={}^tbJ_nA\right\}\,,
\eqn
which is the semidirect product of $\Sp(2n,\RR)$ and the $n$-dimensional Heisenberg group $H_n$.

A map $\rho:\Gamma\to Q$ with entries $\pi(\gamma), b(\gamma), c(\gamma)$ and $d(\gamma)$ 
is a homomorphism if and only if $\pi:\Gamma\to\Sp(2n,\RR)$ is a homomorphism, 
$b:\Gamma\to\RR^{2n}$ is a $1$-cocycle (with the $\Gamma$-module structure on $\RR^{2n}$ is given by $\pi$) and 
\bq\label{eq:ex}
d(\gamma_1\gamma_2)={}^tb(\gamma_1)J_n\pi(\gamma_1)b(\gamma_2)+d(\gamma_1)+d(\gamma_2)\,.
\eq
To reinterpret \eqref{eq:ex}, we observe that there is a bilinear symmetric form on $\h^1(\Gamma,\pi)$
with values in $\h^2(\Gamma,\RR)$ obtained by composing the cup product 
$\h^1(\Gamma,\pi)\times\h^1(\Gamma,\pi)\to\h^2(\Gamma,\pi\otimes\pi)$ with the projection on trivial coefficients 
given by the invariant symplectic form;  
denoting by $Q_\pi$ the corresponding quadratic form, 
we have that 
\bqn
(\gamma_1,\gamma_2)\longmapsto{}^tb(\gamma_1)J_n\pi(\gamma_1)b(\gamma_2)
\eqn
is a representative of $Q_\pi([b])$ and the existence of a function $d$ satisfying \eqref{eq:ex}
amounts to $Q_\pi([b])=0$.

Let now $\Gamma=\pi_1(S)$, where $S$ is an oriented surface of genus $g$,
and let $\rho_i:\Gamma\to\Sp(2n_i,\RR)$,
where $\rho_1$ is in the Hicthin component of $\Sp(2n_1,\RR)$, 
while $\rho_2$ is the precomposition of a Hitchin representation into $\Sp(2n_2,\RR)$
with an orientation reversing automorphism of $\Gamma$.
Identifying $\h^2(\Gamma,\RR)$ with $\RR$ by means of the chosen orientation,
we have \cite{Goldman_email} that $Q_{\rho_1}$ is positive definite while $Q_{\rho_2}$ is negative definite.
Choose $[b_i]\in\h^1(\Gamma, \rho_i)$ both non-zero such that
\bq\label{eq:q}
Q_{\rho_1}([b_1])+Q_{\rho_2}([b_2])=0\,.
\eq
Set $\pi:=\rho_1\oplus\rho_2$, $b:=b_1+b_2$ and let $d$ be a solution of \eqref{eq:ex},
which exists by \eqref{eq:q}.  Then $\rho:\pi_1(S)\to\Sp(2(n+1),\RR)$ is a weakly maximal representation
with $\T(\rho)=n_1-n_2$, and the real Zariski closure of its image is
the semi direct product of $G_1\times G_2<\Sp(2n,\RR)$ with $H_n$; 
here $G_i$ is the real Zariski closure of the image of $\rho_i$.


\def\cprime{$'$} \def\cprime{$'$}
\providecommand{\bysame}{\leavevmode\hbox to3em{\hrulefill}\thinspace}
\providecommand{\MR}{\relax\ifhmode\unskip\space\fi MR }
\providecommand{\MRhref}[2]{%
  \href{http://www.ams.org/mathscinet-getitem?mr=#1}{#2}
}
\providecommand{\href}[2]{#2}

\vskip1cm

\end{document}